\documentclass[10.9pt,a4paper, reqno]{amsart}
\usepackage{a4wide}

\usepackage[utf8]{inputenc}
\usepackage[english]{babel}

\usepackage{amsfonts,amssymb,amsmath}
\usepackage[T1]{fontenc}
\usepackage{lmodern}
\usepackage[hidelinks]{hyperref}

\usepackage{tikz}
\usepackage[all]{xy}
\usepackage{enumitem}
\usepackage{color}
\makeatletter
\newcommand{\mylabel}[2]{#2\def\@currentlabel{#2}\label{#1}}
\usetikzlibrary{calc, matrix, arrows, cd}

\newcommand{\bsm}{\left(\begin{smallmatrix}}
\newcommand{\esm}{\end{smallmatrix}\right)}

\newtheorem{theorem}{Theorem}[section]

\newtheorem{lemma}[theorem]{Lemma}
\newtheorem{proposition}[theorem]{Proposition}

\theoremstyle{definition}

\newtheorem{remark}[theorem]{Remark}
\newtheorem*{remark*}{Remark}

\newtheorem{notation}[theorem]{Notation}
\newtheorem{convention}[theorem]{Convention}
\newtheorem{construction}[theorem]{Construction}
\newtheorem{claim}{Claim}
\newtheorem*{claim*}{Claim}

\newcommand{\ol}{\overline}

\newcommand{\Max}{\operatorname{Max}}

\newcommand{\La}{\Lambda}
\newcommand{\GL}{\operatorname{GL}}
\newcommand{\SL}{\operatorname{SL}}
\newcommand{\coker}{\operatorname{coker}}
\newcommand{\im}{\operatorname{im}}

\newcommand{\Z}{\mathbb{Z}}
\newcommand{\Q}{\mathbb{Q}}

\newcommand{\F}{\mathbb{F}}

\newcommand{\s}{\mathbb{S}}

\title{Unknotting orientable surfaces}
\author{Anthony Conway}
\address{The University of Texas at Austin, Austin TX 78712}
\email{anthony.conway@austin.utexas.edu}

\begin{document}
\maketitle

\begin{abstract}
It is shown that every locally flatly embedded genus~$g \in \{1,2\}$ surface
in the~$4$-sphere with knot group~$\Z$ is unknotted.
The same proof establishes that any two genus~$g \in \{1,2\}$ surfaces in~$D^4$ with knot group $\Z$ and common boundary an Alexander polynomial one knot are isotopic rel. boundary.
In previous work, the author and Powell reduced such unknotting problems to a question concerning the cancellation of $(-t)$-quadratic forms over $\Z[t^{\pm 1}]$,  which was solved in genus $g \geq 3$ using work of Bass.
In genus $g=2$, we observe that the same proof goes through using further work of Bass.
In genus $g=1,$ the result instead follows from a statement in commutative algebra which was proved with the assistance of AI.
 Combined with earlier work of Freedman on locally flat spheres with knot group $\Z$,  this shows that a locally flatly embedded orientable surface in~$S^4$ is unknotted if and only if its knot group is $\Z$.
\end{abstract}

\section{Introduction}
\label{sec:statement}

A question in the study of knotted surfaces,  sometimes referred to as the unknotting conjecture, asks whether an embedded surface in~$S^4$ with cyclic knot group is necessarily unknotted; see e.g.~\cite[Question 4.29(a)]{K3}.
Here, the \emph{knot group} of an embedded surface~$F \subset S^4$ refers to the group~$\pi_1(S^4 \setminus F)$.
This question is of interest both in the smooth and topological categories.
In the smooth category,  the answer is negative for nonorientable surfaces~\cite{FinashinKreckViro} and remains unknown for orientable surfaces.
In the topological category, the conjecture posits that an orientable (resp. nonorientable) surface~$F \subset S^4$ with knot group~$\Z$ (resp.  $\Z_2$) is necessarily unknotted.
For nonorientable surfaces, this has been disproved~\cite{ConwayGalvin}, whereas the orientable case remains open.

From now on,  we work in the topological category,  with locally flat (closed and connected) surfaces and topological isotopies,  and review what is known about the unknotting conjecture in more detail.
Nonorientable genus~$h$ surfaces in~$S^4$ with knot group~$\Z_2$ are unknotted when either the Euler number~$e$ satisfies~$|e| \neq 2h$ or when~$h \leq 5$~\cite{ConwayOrsonPowell} (with the case~$h=1$ originally due to Lawson~\cite{Lawson} and the cases~$h=4,5$ requiring~\cite{Pencovitch}); on the other hand, for every~$h \geq 9$, there are nontrivial knotted surfaces in~$S^4$ with nonorientable genus~$h$ and knot group~$\Z_2$~\cite{ConwayGalvin}.
Orientable genus~$g$ surfaces in~$S^4$ with knot group~$\Z$ are unknotted when~$g=0$~\cite{Freedman,FreedmanQuinn} and when~$g\geq 3$~\cite{ConwayPowell}.
The cases~$g=1,2$ remain open; see~\cite[Theorem 1.9]{ConwayPowell} and~\cite{JuhaszPowell} for evidence that the unknotting conjecture might also hold in these cases.

\medbreak

The present article settles the unknotting conjecture in genus $g=1,2$.

\begin{theorem}
\label{thm:ZTori}
A genus $g\in \{1,2\}$ surface~$F \subset S^4$ with knot group~$\pi_1(S^4\setminus F)\cong \Z$ is unknotted.
\end{theorem}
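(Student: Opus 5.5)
The plan is to follow the reduction of Conway--Powell and isolate the single algebraic input that fails in low genus. Let $F\subset S^4$ have genus $g\in\{1,2\}$ and knot group $\Z$, and write $X_F=S^4\setminus \nu F$ for its exterior. Comparing with the exterior $X_U$ of the standard unknotted genus $g$ surface $U$, the work of Conway--Powell shows the following. Because $\pi_1(X_F)\cong \Z$ is a good group, surgery and Freedman's classification of $\Z$-manifolds reduce unknottedness to an algebraic statement. Namely, $F$ is unknotted if and only if a suitable isometry exists between the equivariant intersection forms on $H_2(X_F;\Z[t^{\pm1}])$ and $H_2(X_U;\Z[t^{\pm1}])$, compatible with the boundary data. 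The boundary $\partial X_F\cong \Sigma_g\times S^1$ is fixed, and we must realise any automorphism of the boundary linking data up to the required extension. Concretely, stabilising both exteriors by copies of $S^2\times S^2$ makes them homeomorphic rel. boundary. The question then becomes whether the resulting $(-t)$-quadratic forms cancel: if $\lambda\oplus H^{k}\cong \lambda'\oplus H^{k}$ over $\Z[t^{\pm1}]$, with $\lambda,\lambda'$ of rank $2g$ and $H$ the hyperbolic form, must $\lambda\cong\lambda'$ hold compatibly with the boundary? I will take this reduction as given from \cite{ConwayPowell}, verifying only that it uses nothing genus-specific beyond the cancellation step.

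For $g=2$ the forms have rank $4$, i.e.\ Witt index $2$ relative to the hyperbolic summands. Conway--Powell used Bass's cancellation theorem, which needs rank large compared with the Krull dimension of $\Z[t^{\pm1}]$, which is $2$, and this only worked for $g\ge 3$. I would instead invoke Bass's finer unitary $K$-theory results (stable range and transitivity of elementary unitary groups on unimodular/hyperbolic vectors for quadratic modules over Laurent polynomial extensions of Dedekind domains). These give cancellation of one hyperbolic plane once the form already contains a hyperbolic summand of rank $\ge 2$ over $\Z[t^{\pm1}]$. Since the unknotted model has $\lambda_U\cong H^{g}$, it suffices to cancel against $H^2$, which I expect Bass's $\Lambda$-quadratic transitivity to deliver. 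The key check is that the relevant unimodular vectors can be moved by the elementary unitary group in rank $4+2k$, and then that we can induct down on $k$.

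For $g=1$ the form is a single hyperbolic plane over $\Z[t^{\pm1}]$, and general stable-range theorems do not apply. Here I would analyse the situation directly. An isometry $H\oplus H^k\cong \lambda'\oplus H^k$ shows that $\lambda'$ is a rank-$2$ stably hyperbolic $(-t)$-quadratic form with stably free, hence free, underlying module (free because projectives over $\Z[t^{\pm1}]$ are free by Quillen--Suslin/Swan). Writing $\lambda'$ as a $2\times 2$ matrix $\bsm a & b\\ -t\bar b & c\esm$ with quadratic refinements, I would show that it has a hyperbolic basis. This reduces to finding a primitive isotropic vector, i.e.\ solving a binary quadratic equation over $\Z[t^{\pm1}]$ with unit discriminant-type constraints. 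This is a statement in commutative algebra about $\Z[t^{\pm1}]$: stably hyperbolic rank-two forms are hyperbolic, possibly after using the boundary automorphism freedom. This is the main obstacle. I would attack it by reducing modulo primes and using that the determinant is $t^{n}$ up to sign, then lifting isotropic vectors. Finally I would combine the result with the realisation of boundary automorphisms from \cite{ConwayPowell} to conclude that $X_F\cong X_U$ rel. boundary, hence $F$ is unknotted.
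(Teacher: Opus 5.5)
\textbf{Verdict.} The proposal has a genuine gap: the genus $g=1$ case, which is the new content of the theorem, is left unproved, and the $g=2$ case rests on unchecked hypotheses.

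\textbf{Genus $g=2$.} Here your plan is the paper's: apply Bass's sharper cancellation down to Witt index $2$ for $\varepsilon=-t$, and feed it into the proof of \cite[Theorem 7.4]{ConwayPowell}. But ``I expect'' must be replaced by checking the hypotheses of \cite[IV.3.2(b), Corollary IV.3.6]{Bass}. There are two.
\begin{enumerate}
\item $\GL_3(\Lambda)$ must act transitively on unimodular vectors of $\Lambda^3$. This holds because the complement of a unimodular vector is projective, hence free.
\item One needs $d_\Lambda\le 1$. This is not a Krull-dimension count for $\Lambda$, which would give $2$. By \cite[Proposition IV.3.9]{Bass}, $d_\Lambda$ is bounded by $\dim\Max\bigl(\Lambda/(I(\Lambda)+\Lambda(1+t))\bigr)$, where $I(\Lambda)$ is generated by the elements $a-\ol a$. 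One has $I(\Lambda)=(t-t^{-1})\subset(1+t)$, so this quotient is $\Z$, which has dimension $1$.
\end{enumerate}

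\textbf{Genus $g=1$.} You yourself flag this case as ``the main obstacle''. Reducing to finding a primitive isotropic vector is reasonable. However, ``reduce modulo primes and lift'' does not produce one. There is no Hensel lemma over $\Z[t^{\pm1}]$, and local solvability does not by itself yield a global solution of a quadratic equation over this $2$-dimensional ring.

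The missing idea is linearisation, and the paper proceeds as follows.
\begin{enumerate}
\item It uses the input from \cite{ConwayPowell} that the intersection form presents the same module as $H$. Comparing elementary ideals, it writes $A=HS$ with $\det S=1$ and $\sigma(S)=S^{-1}$, where $\sigma(X)=D\ol X D^{-1}$.
\item Congruence to $H$ then follows from solving $\sigma(Q)=SQ$ with $Q\in\SL_2(\Lambda)$.
\item In size $2$, $\sigma$ is additive. So this equation splits into antilinear fixed-point conditions on the two columns, whose solution sets $E_1,E_2$ are modules over $R=\Z[u]$.
\item A four-case analysis produces local solutions $Q_{\mathfrak m}\in\GL_2(\Lambda_{\mathfrak m})$ at every maximal ideal $\mathfrak m\subset R$. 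These show that $E_1,E_2$ are locally free of rank $2$, hence free over $\Z[u]$.
\item The condition $\det Q=1$ is then arranged globally using three facts: $\coker(h_-\cdot\colon E_2\to E_1)\cong R/(f)\cong\Z$, the isomorphism $\det\colon E_1\wedge_R E_1\cong h_-R$, and $R^\times=\{\pm1\}$.
\end{enumerate}
Without some such mechanism for turning local solvability into a global hyperbolic basis, your $g=1$ argument does not go through.
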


In particular, we note that if $F \subset S^4$ is a smoothly embedded torus with four critical points, then $F$ is topologically unknotted.
This answers the topological version of Question (i) in the remark following~\cite[Problem 4.29]{K3}.
The result was previously known when the~$2$-component link~$L:=F \cap S^3$ is assumed to have identically zero Alexander polynomial~\cite[Theorem 1.4]{JuhaszPowell}.

Combining Theorem~\ref{thm:ZTori} with~\cite{Freedman,ConwayPowell} leads to the following result which resolves the orientable case of~\cite[Question 4.29(a)]{K3}.
\begin{theorem}
An orientable surface in~$S^4$ is unknotted if and only if its knot group is $\Z$.
\end{theorem}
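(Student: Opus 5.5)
The second theorem is an ``if and only if'', and I would prove the two directions separately.

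The direction ``unknotted implies knot group~$\Z$'' is elementary. The unknotted genus~$g$ surface bounds a standardly embedded handlebody in~$S^4$. Equivalently, it is obtained from the unknotted $2$-sphere by attaching $g$ trivial $1$-handles. The complement of the unknotted sphere is $S^1 \times D^3$. Attaching a trivial $1$-handle to a surface does not change the fundamental group of its complement. One way to see this is to note that the meridian of the new tube is isotopic, in the complement, to a meridian of the old surface. A van Kampen computation then shows that the only effect is to add a generator which is identified with an existing meridian. By induction on~$g$, the knot group of the unknotted orientable genus~$g$ surface is therefore~$\Z$, generated by a meridian.

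For the converse, let $F \subset S^4$ be a locally flat, closed, connected, orientable surface of genus~$g$ with $\pi_1(S^4 \setminus F) \cong \Z$. I would split into three cases according to the genus.
\begin{itemize}
\item If $g = 0$, then $F$ is unknotted by Freedman's theorem on locally flat spheres with knot group~$\Z$~\cite{Freedman,FreedmanQuinn}.
\item If $g \geq 3$, then $F$ is unknotted by the earlier work of the author and Powell~\cite{ConwayPowell}.
\item If $g \in \{1,2\}$, then $F$ is unknotted by Theorem~\ref{thm:ZTori}.
\end{itemize}
These cases exhaust all genera, so every orientable surface with knot group~$\Z$ is unknotted.

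The argument is a direct assembly of these three results, so I expect no serious obstacle at this level. The real difficulty lies inside Theorem~\ref{thm:ZTori}. There, following the abstract, one reduces via~\cite{ConwayPowell} to a cancellation problem for $(-t)$-quadratic forms over~$\Z[t^{\pm 1}]$. The case $g=2$ is handled by further results of Bass, and the case $g=1$ by a separate commutative-algebra argument. In the present deduction we simply invoke that theorem.

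The only points to check are hypotheses. First, the three cited results are stated for locally flat embeddings and topological isotopy, which matches the conventions fixed in the introduction. Second, orientable surfaces in~$S^4$ are classified by genus, so the trichotomy above covers every case.
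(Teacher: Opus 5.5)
Your proposal is correct and follows the paper's argument: the paper obtains this theorem by combining Theorem~\ref{thm:ZTori} (genus $1$ and $2$) with Freedman's result for spheres and the Conway--Powell result for genus $g\geq 3$. The only difference is that you also sketch the standard ``only if'' direction (an unknotted surface has knot group $\Z$), which the paper takes for granted.
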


The same proof also yields the following result concerning surfaces in the $4$-ball.
\begin{theorem}
\label{thm:D4}
Let $K \subset S^3$ be an Alexander polynomial one knot.
Any two genus~$g$ surfaces in~$D^4$ with knot group $\Z$ and boundary $K$ are isotopic rel. boundary.
\end{theorem}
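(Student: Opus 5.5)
We follow the strategy of the author's earlier work with Powell and of the proof of Theorem~\ref{thm:ZTori}. Let $F_0,F_1 \subset D^4$ be genus~$g$ surfaces with knot group~$\Z$ and common boundary~$K$. Let $X_i := D^4 \setminus \nu F_i$ denote their exteriors. Since $\Delta_K=1$, the exterior of $K$ in $S^3$ has infinite cyclic cover with trivial rational, indeed integral, homology beyond degree zero. The boundaries $\partial X_i$ are then canonically identified: each is the union of $S^3 \setminus \nu K$ with $F_i^\circ \times S^1$, and we glue along a parametrization of the surfaces rel. boundary.

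The first step is homotopical. Because $\pi_1(X_i)\cong \Z$, the universal cover $\widetilde X_i$ is the infinite cyclic cover. The equivariant intersection form on $H_2(X_i;\Z[t^{\pm1}])$ is then a Hermitian form over $\La:=\Z[t^{\pm 1}]$. A Mayer--Vietoris and duality computation, using $\Delta_K=1$ to kill the knot contribution, shows that $H_2(X_i;\La)$ is free of rank~$2g$. It also shows that the form presents the boundary linking data, which is determined by $K$ and the genus. Here the form is even and, after the usual normalization, is a $(-t)$-quadratic form, as in Conway--Powell. Its stabilization by a hyperbolic form is then isometric to the corresponding form for a pushed-in Seifert surface (``unknotted'') model.

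The second step applies the surgery-theoretic machinery of Conway--Powell. Exteriors with $\pi_1=\Z$ and given boundary are classified rel. boundary by the isometry class of this form, compatible with the boundary identification. So a homeomorphism $X_0\cong X_1$ rel. boundary, extending over the tubular neighbourhoods, follows once the two forms are isometric via an isometry inducing the prescribed boundary automorphism. This uses Freedman's work for $\Z$ as a good group, and Kreck's modified surgery or the classification of $\Z$-manifolds. An Alexander-trick-type argument for $D^4$ rel. boundary, together with the fact that homeomorphisms of $D^4$ rel. $\partial$ are isotopic to the identity, upgrades this to an isotopy rel. boundary.

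The main obstacle is the algebraic cancellation: deducing from $\lambda_0\oplus H \cong \lambda_1\oplus H$ that $\lambda_0\cong\lambda_1$, with the boundary condition respected, for rank $2g\in\{2,4\}$. For $g\geq 3$ this is Bass's cancellation in the stable range. For $g=2$ we invoke Bass's sharper unitary cancellation theorem for forms over rings of Krull dimension~$2$ containing a hyperbolic plane: the rank~$4$ forms here contain $H(\La)$ because they are stably the standard form and the relevant Witt index is positive. For $g=1$, with rank~$2$, no stable-range result applies. Instead we use the explicit structure of a rank-two $(-t)$-quadratic form over $\La$ with prescribed boundary. Its Gram matrix is $\bsm a & b\\ \ol b & c\esm$ with determinant a fixed unit multiple. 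We must show that any such form is isometric to the standard one; equivalently, that the relevant orbit under $\SL_2$-type transformations is transitive. This reduces to a commutative-algebra statement about solutions of a norm-type equation in $\La$, namely that a certain element with unit norm in a quadratic extension can be reduced by elementary moves. This is the step I would expect to require the most work, attacking it by a Euclidean-type degree-reduction argument on the Laurent polynomial entries.
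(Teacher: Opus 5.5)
\textbf{The case $g=1$ is not proved.} Your topological set-up matches the paper. The paper reruns the $S^4$ argument and ends by citing \cite[Theorem~1.3]{ConwayPowell}; once both forms are isometric to $H^{\oplus g}$, nothing about the boundary has to be arranged by hand. The real gap is $g=1$, which is the whole new content of the statement. You reduce it to ``a norm-type equation'' to be solved by ``Euclidean-type degree reduction'' and stop there. That is a hope, not an argument, and it is not clear it can work:
\begin{itemize}
\item $\Lambda=\Z[t^{\pm1}]$ is not Euclidean, since leading coefficients are integers that need not divide one another.
\item The congruence between $A$ and $H$ is quadratic in the unknown matrix, so there is no evident quantity that elementary moves decrease.
\end{itemize}
You are right that the quadratic extension $\Lambda\supset R:=\Z[u]$, with $u=(t-1)(t^{-1}-1)$, is the relevant structure. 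The paper uses it very differently, in the following steps.
\begin{enumerate}
\item Fitting ideals turn the input that $A$ presents the same module as $H$ into divisibility of all entries by $t-1$. Hence $A=HS$ with $S=\bsm \overline a & b(t-1)\\ c(t^{-1}-1) & a\esm$, $\det S=a\overline a-bcu=1$, and $\sigma(S)=S^{-1}$ for $\sigma(X)=D\overline XD^{-1}$, $D=\operatorname{diag}(t,1)$.
\item The identity $\overline P^TH=H\sigma(P)^{-1}$ for $P\in\SL_2(\Lambda)$ turns the congruence into $\sigma(Q)=SQ$. This equation is antilinear column by column, so the column solution sets $E_1,E_2$ are $R$-modules.
\item One solves $\sigma(Q)=SQ$ locally at every maximal ideal of $R$, in four cases: $(I+S)^{-1}$, a version conjugated by $W$, completion of a fixed unimodular vector at $(2,u)$, and a descent lemma where $u(4-u)$ is a unit.
\item From the local solutions one deduces that $E_1,E_2$ are free of rank~$2$.
\item One normalises the determinant using multiplication by $1+t^{-1}\colon E_2\to E_1$, whose cokernel is $R/(4-u)\cong\Z$, together with $E_1\wedge_R E_1\cong(1+t^{-1})R$ and $R^\times=\{\pm1\}$.
\end{enumerate}
None of these steps, nor a workable substitute, appears in your proposal.

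\textbf{The $g=2$ input is misstated.} Over a ring of Krull dimension~$2$, Bass's general cancellation requires Witt index $\geq 3$, which is exactly why \cite{ConwayPowell} stopped at $g\geq 3$. A result needing only a hyperbolic plane would also settle $g=1$, contradicting your own remark that nothing applies there. Moreover, the Witt index hypothesis is on the standard form $H_{-t}(\Lambda)^{\oplus 2}$, which has index~$2$. So no argument that your forms ``contain $H(\Lambda)$'' is needed, and stable isometry would not give that anyway. What the paper actually checks are the two hypotheses of \cite[IV.3.2(b)]{Bass} that allow index $\geq 2$:
\begin{itemize}
\item $\GL_3(\Lambda)$ acts transitively on unimodular vectors of $\Lambda^3$, because finitely generated projective $\Lambda$-modules are free;
\item $d_\Lambda\leq 1$, obtained from \cite[Proposition~IV.3.9]{Bass} via the computation $\Lambda/(I(\Lambda)+\Lambda(1+t))=\Lambda/(1+t)\cong\Z$, which is specific to $\varepsilon=-t$.
\end{itemize}
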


When $g \neq 1,2$,  Theorem~\ref{thm:D4} follows from~\cite[Theorem 1.2]{ConwayPowell}, with the result for discs first appearing in~\cite{ConwayPowellDiscs}.
Our contribution to Theorem~\ref{thm:D4} is the case where $g \in \{1,2\}$.

\subsection{Proof outline}
\label{sub:ProofOutline}
Prior work of the author and Powell~\cite[Theorem 1.4]{ConwayPowell} shows that a genus $g$ surface $F \subset S^4$ with knot group $\Z$ is unknotted if and only if the equivariant intersection form~$\lambda_{X_F}$ of its exterior is isometric to (the hermitian form underlying)~$H^{\oplus g}$, where $H$ denotes the following matrix with values in the Laurent polynomial
 ring~$\Lambda:=\Z[t^{\pm 1}]$:
$$
H:=\begin{pmatrix}
0&t-1 \\ t^{-1}-1&0
\end{pmatrix}.
$$
When $g\geq 3$,  it is known that~$\lambda_{X_F} \cong H^{\oplus g}$~\cite[Theorem 7.4]{ConwayPowell}.
The argument hinges on the following result of Bass~\cite[Corollary IV.3.6]{Bass}; see~\cite[Proposition 7.3]{ConwayPowell} for the present formulation.
Here,  we endow the ring~$\Lambda:=\Z[t^{\pm 1}]$ with the involution~$-\colon \Lambda \to \Lambda$ induced by~$t \mapsto~t^{-1}$ and, given a central unit~$\varepsilon \in \Lambda$ with $\varepsilon \ol \varepsilon=1$,  we refer the reader to~\cite[Section 1.1]{RanickiExact} for the definition of an~$\varepsilon$-quadratic form (see also~\cite[Section 7.1]{ConwayPowell}).
We write~$H_{\varepsilon}(\Lambda)=\left(\Lambda^2,\left[\bsm 0&1 \\ 0&0 \esm \right] \right)$ for the standard hyperbolic $\varepsilon$-quadratic form.

\begin{proposition}[{Bass~\cite[Corollary IV.3.6]{Bass}}]
\label{prop:ConwayPowell73}
Let~$\varepsilon \in \Lambda$ be a central unit with~$\varepsilon\ol{\varepsilon}=1$ and let~$(V,\theta),(V',\theta')$ be~$\varepsilon$-quadratic forms over~$\Lambda$.
Assume that for some~$n \geq 0$ there is an isometry
$$ (V,\theta)\oplus H_{\varepsilon}(\Lambda)^{\oplus n} \cong (V',\theta') \oplus H_{\varepsilon}(\Lambda)^{\oplus n}.$$
If the Witt index of~$(V',\theta')$ satisfies~$\operatorname{ind}(V',\theta') \geq 3$, then there is an isometry
$$ (V,\theta) \cong (V',\theta').$$
\end{proposition}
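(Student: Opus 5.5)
This is a cancellation statement of Witt type, and I would deduce it from Bass's unitary cancellation theorem in~\cite[Chapter IV]{Bass} rather than reprove it. The plan is to induct on~$n$, so it suffices to cancel a single hyperbolic summand. Concretely, the claim to establish is: if $(V,\theta)\oplus H_\varepsilon(\Lambda)\cong (V',\theta')\oplus H_\varepsilon(\Lambda)$ and $\operatorname{ind}(V',\theta')\geq 3$, then $(V,\theta)\cong(V',\theta')$. Applying this $n$ times gives the proposition. At each intermediate stage the right-hand side is $(V',\theta')\oplus H_\varepsilon(\Lambda)^{\oplus k}$, whose Witt index is still at least~$3$, so the hypothesis is preserved throughout the induction.

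For the single step, write $W:=(V',\theta')\oplus H_\varepsilon(\Lambda)$. The given isometry identifies $W$ with $(V,\theta)\oplus H_\varepsilon(\Lambda)$, and this provides a second hyperbolic pair $(e,f)$ in~$W$. Its orthogonal complement is isometric to~$(V,\theta)$, while the standard hyperbolic pair $(e_0,f_0)$ has complement~$(V',\theta')$.

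The key step is transitivity: find an isometry of~$W$ sending $(e,f)$ to $(e_0,f_0)$. Such an isometry restricts to an isometry between the orthogonal complements, which is exactly $(V,\theta)\cong(V',\theta')$. Transitivity follows from Bass's theorem that the elementary unitary group $EU_\varepsilon$ acts transitively on unimodular hyperbolic pairs (equivalently, on unimodular elements with prescribed quadratic value). This holds in a form of Witt index at least $d+1$ or so, where $d$ is a dimension invariant of the ring, and the quadratic refinement~$\theta$ is needed to control $\theta(e)=0$.

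For $\Lambda=\Z[t^{\pm1}]$ the relevant quantity is the Krull dimension, or more precisely the dimension of the maximal spectrum, which equals~$2$. So Bass's stable range condition requires a hyperbolic rank of at least $2+1=3$. Since $W$ contains $(V',\theta')$, whose Witt index is at least~$3$, the condition is met.

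The main obstacle is verifying that the dimension hypothesis in~\cite[Corollary IV.3.6]{Bass} applies to~$\Lambda$ with this involution. Bass works with a form ring $(\Lambda,\Lambda_{\min})$ over a commutative base $A$ with $\Lambda$ finite over~$A$, and requires $\dim\Max(A)\leq d$. I would take $A$ to be the fixed subring $\Z[t+t^{-1}]$, over which $\Lambda$ is finite. Then $\Max(A)$ has dimension~$2$, so the index bound $\geq 3$ is what is needed.

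A further point to check is that the relevant form parameter is the minimal one, so that $\varepsilon$-quadratic forms in Ranicki's sense match Bass's forms. Once this matches, the corollary applies verbatim.
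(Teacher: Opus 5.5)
Your proposal is correct and matches the paper. The paper gives no independent proof: it simply invokes Bass's Corollary~IV.3.6, in the formulation of Conway--Powell, Proposition~7.3. That corollary applies here because the minimal form parameter $\{x-\varepsilon\ol x : x\in\Lambda\}$ corresponds to $\varepsilon$-quadratic forms, and because $\dim\Max=2$ for $\Lambda$ (equivalently for the fixed subring $\Z[u]=\Z[t+t^{-1}]$ you use as base), which gives exactly the bound $\operatorname{ind}(V',\theta')\geq 3$.

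There is one small imprecision in your transitivity sketch, though it does not affect your conclusion since you ultimately apply the corollary verbatim. The hyperbolic-rank threshold is used in $W=(V',\theta')\oplus H_\varepsilon(\Lambda)$, whose index is at least $4=d+2$ (with your $d=2$), not merely $3=d+1$. If transitivity on hyperbolic pairs held in general at index $d+1$ in $W$, your reduction would already cancel down to $\operatorname{ind}(V',\theta')\geq 2$. The paper obtains that stronger statement only in Theorem~\ref{thm:g=2}, using the extra hypotheses of Bass's IV.3.2(b).
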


Here, the \emph{Witt index}~$\operatorname{ind}(V,\theta)$ of an~$\varepsilon$-quadratic form is the largest integer~$k$ such that a subform of~$(V,\theta)$ is isometric to the hyperbolic form~$H_{\varepsilon}(\Lambda)^{\oplus k}=\left(\Lambda^{2k},\left[\bsm 0&1 \\ 0&0 \esm\right] ^{\oplus k}\right)$.

\begin{remark}
\label{rem:CancellationEnough}
For $\varepsilon=-t,$ if the hypothesis~$\operatorname{ind}(V',\theta') \geq 3$ could be replaced by~$\operatorname{ind}(V',\theta') \geq g$ for~$g \in \{1,2\}$, then~\cite[proof of Theorem 7.4]{ConwayPowell} would establish the unknotting conjecture in genus~$g\in \{1,2\}$.
This is how Theorem~\ref{thm:ZTori} is proved in genus $g=2$,  as we now outline.
\end{remark}

Bass permits the condition~$\operatorname{ind}(V',\theta') \geq 3$ to be replaced by~$\operatorname{ind}(V',\theta') \geq 2$ provided~$\GL_3(\Lambda)$ acts transitively on the set of unimodular vectors of~$\Lambda^3$, and~$d_\Lambda \leq 1$~\cite[IV.3.2(b) and Corollary~IV.3.6]{Bass}.
Here,  we recall that a vector~$v \in \Lambda^n$ is \emph{unimodular} if its coordinates generate~$\La$ (see e.g.~\cite[IV.3.1]{Bass}) but we refer to~\cite[IV.3.1]{Bass} for the definition of the integer~$d_\Lambda$: we will not need its definition,  only the upper bound provided in~\cite[Proposition~IV.3.9]{Bass}.
We nevertheless note that in the notation of Bass, $\Lambda$ does not refer to the ring  over which the forms are considered but instead to the \emph{form parameter} which, in the present set-up,  is the set~$\{x-\varepsilon \ol x \colon x \in \Lambda\}.$

Specialising to $\varepsilon=-t$ leads to the following cancellation result.

\begin{theorem}[{Bass~\cite[Corollary IV.3.6 and Proposition IV.3.9]{Bass}}]
\label{thm:g=2}
Let~$(V,\theta),(V',\theta')$ be~$(-t)$-quadratic forms over~$\Lambda$.
Assume that for some~$n \geq 0$ there is an isometry
$$ (V,\theta)\oplus H_{-t}(\Lambda)^{\oplus n} \cong (V',\theta') \oplus H_{-t}(\Lambda)^{\oplus n}.$$
If~$\operatorname{ind}(V',\theta') \geq 2$,  then there is an isometry
$$ (V,\theta) \cong (V',\theta').$$
\end{theorem}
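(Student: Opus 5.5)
The plan is to read off the statement from Bass's general cancellation theorem~\cite[Corollary IV.3.6]{Bass}, which was already used in Proposition~\ref{prop:ConwayPowell73}. As recalled above, the Witt index bound there can be lowered from $3$ to $2$ once two hypotheses on $\Lambda=\Z[t^{\pm1}]$ (with the form parameter determined by $\varepsilon=-t$) are verified:
\begin{enumerate}
\item[(a)] $d_\Lambda \leq 1$, in the sense of~\cite[IV.3.1]{Bass};
\item[(b)] $\GL_3(\Lambda)$ acts transitively on the unimodular vectors of $\Lambda^3$.
\end{enumerate}
Given (a) and (b), \cite[IV.3.2(b) and Corollary~IV.3.6]{Bass} apply directly. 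Starting from the isometry $(V,\theta)\oplus H_{-t}(\Lambda)^{\oplus n} \cong (V',\theta')\oplus H_{-t}(\Lambda)^{\oplus n}$, I would cancel one hyperbolic summand at a time by induction on $n$. At each step the form on the right still contains $H_{-t}(\Lambda)^{\oplus 2}$, because $\operatorname{ind}(V',\theta')\geq 2$. After $n$ steps this gives $(V,\theta)\cong(V',\theta')$.

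For (a), I would use~\cite[Proposition~IV.3.9]{Bass}. It bounds $d$ for a ring that is finite over a commutative noetherian ring by the dimension of its maximal spectrum, or a variant of it. Here one must check that this bound is at most $1$ for $\Z[t^{\pm1}]$. Its Krull dimension is $2$, so the naive bound gives only $2$. I therefore expect the relevant invariant in Bass's proposition to depend on the form parameter. The idea is to exploit that, for $\varepsilon=-t$, the parameter $\{x+t\ol x\}$ makes the form-theoretic dimension drop by one. Concretely, the verification should reduce to a statement about the fixed ring of the involution, or to a count that subtracts one from the dimension. Reading Proposition~IV.3.9 carefully to extract exactly this bound is the first thing I would do.

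For (b), I would invoke Suslin's theorem: for a Laurent polynomial ring over $\Z$ and $n\geq 3$, every unimodular row of length $n$ is completable, equivalently $\operatorname{SL}_n(\Lambda)$ acts transitively on unimodular vectors. This follows from Suslin's $K_1$-stability and the Quillen--Suslin theorem extended to Laurent polynomial rings, together with the fact that $\Z$ is a PID. I expect the main obstacle to be (a): matching Bass's definition of $d_\Lambda$ and his Proposition IV.3.9 to the present ring and form parameter, and confirming that the resulting bound is $\leq 1$ rather than $2$.
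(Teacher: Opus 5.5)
Your architecture matches the paper's. You verify that $\GL_3(\Lambda)$ acts transitively on unimodular vectors of $\Lambda^3$ and that $d_\Lambda\leq 1$, then apply \cite[IV.3.2(b) and Corollary~IV.3.6]{Bass}. Your part (b) is correct and agrees with the paper in substance, though Suslin's $K_1$-stability is not needed. Given a unimodular $v$ and $f\colon\Lambda^3\to\Lambda$ with $f(v)=1$, the summand $\ker(f)$ is finitely generated projective, hence free because projective $\Lambda$-modules are free. A basis of $\ker(f)$ together with $v$ then gives the required element of $\GL_3(\Lambda)$.

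The gap is part (a), which is exactly the step that lowers the Witt index bound from $3$ to $2$, and which your proposal only announces. The missing input is the explicit bound from \cite[Proposition~IV.3.9]{Bass}, applied with $R=A=\Lambda$ and $\lambda=-t$:
\[
d_\Lambda\leq\sup\bigl(0,\dim\Max(\Lambda/(I(\Lambda)+\Lambda(1+t)))\bigr),
\]
where $I(\Lambda)$ is the ideal generated by all elements $a-\ol a$. You then need the following computation.
\begin{itemize}
\item $I(\Lambda)=(t-t^{-1})$, because $a-\ol a$ is a $\Z$-linear combination of the elements $t^n-t^{-n}$, and $t^n-t^{-n}=(t-t^{-1})(t^{n-1}+t^{n-3}+\dots+t^{-(n-1)})$.
\item Since $t-t^{-1}=t^{-1}(t-1)(t+1)$, one gets $I(\Lambda)+\Lambda(1+t)=\Lambda(1+t)$.
\item The quotient is therefore $\Lambda/(1+t)\cong\Z$, whose maximal spectrum has dimension $1$, so $d_\Lambda\leq 1$.
\end{itemize}

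The drop in dimension comes from this quotient by $(1+t)$, not from the fixed ring of the involution. The reduction you float to the fixed ring $\Z[u]$ would fail: $\Z[u]$ is a two-dimensional Jacobson ring, so $\dim\Max(\Z[u])=2$. That route would only give $d_\Lambda\leq 2$, which does not meet the hypothesis of \cite[IV.3.2(b)]{Bass}. Without this computation, the proposal does not establish the statement.
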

\begin{proof}
We claim that the group~$\GL_3(\Lambda)$ acts transitively on the set of
unimodular vectors in~$\Lambda^3$.
Let~$v\in \Lambda^3$ be unimodular.  
By definition, there is a~$\Lambda$-linear map~$f \colon \Lambda^3\to \Lambda$ with~$f(v)=~1$.  
Therefore~$\Lambda^3=\Lambda v\oplus\ker(f)$ and so the module~$\ker(f)$ is finitely generated projective of rank~$2$.
Finitely generated projective~$\Lambda$-modules are free (see e.g.~\cite[Chapter V, Corollary~4.12]{Lam}) so in fact~$\ker(f)\cong \Lambda^2.$
 Choose a basis~$v_2,v_3$ of~$\ker(f)$. 
Since~$v,v_2,v_3$ is a basis of~$\Lambda^3$,  there is an element of~$\GL_3(\Lambda)$ that carries the first standard basis vector to~$v$.  
Hence all unimodular vectors lie in the same~$\GL_3(\Lambda)$-orbit.  
This concludes the proof of the claim.

We claim that~$d_\Lambda \leq 1$.
Using~$I(\Lambda) \subset \Lambda$ to denote the ideal generated by all elements of the form~$a-\ol a$ with~$a \in \Lambda$, Bass's~\cite[Proposition~IV.3.9]{Bass} with~$R=A=\Lambda$ and $\lambda=-t$ states
\[
 d_\Lambda \leq
 \sup\left(0,
 \dim\Max\bigl(\Lambda/(I(\Lambda)+\Lambda(1+t))\bigr)\right).
\]
Here $\Max$ denotes the set of maximal ideals, appropriately topologised.

We assert that~$I(\Lambda)=(t-t^{-1})$.
The $\supset$ inclusion clearly holds, and so we focus on the $\subset$ inclusion.
Since for any $a \in \Lambda$,  the element $a-\ol a$ has no constant factor, this further reduces to proving that~$t^n-t^{-n} \in (t-t^{-1})$, i.e. that $t-t^{-1}$ divides $t^n-t^{-n}$ for every $n \geq 2$.
This now follows by noting that~$t^n-t^{-n}
 =(t-t^{-1})(t^{n-1}+t^{n-3}+\cdots+t^{-(n-1)}),$ yielding the assertion.
Next, we note that~$I(\Lambda)+\Lambda(1+t)=\Lambda(1+t)$: we have~$ t-t^{-1}=t^{-1}(t-1)(t+1)$ so $I(\Lambda)\subseteq \Lambda(1+t)$ and the reverse inclusion clearly holds.

The previous two paragraphs imply that~$\Lambda/(I(\Lambda)+\Lambda(1+t))=\Lambda/(1+t)\cong \Z$ and so
$$d_\Lambda \leq
 \sup\left(0,
 \dim\Max(\Z) \bigr)\right)=1.$$
This concludes the proof of the second claim.

The theorem now follows from these two claims together with~\cite[Corollary IV.3.6]{Bass}.
\end{proof}

To the best of the author's knowledge,  the methods of Bass cannot be used to prove Theorem~\ref{thm:g=2} when~$\operatorname{ind}(V',\theta') = 1$; see the hypotheses in~\cite[IV.3.2]{Bass}.
When $g=1,$ Theorem~\ref{thm:ZTori} is instead a consequence of the following result which is the main technical theorem of this article,  and which was obtained with the assistance of~AI.

\begin{theorem}\label{thm:main}
If a size~$2$ nondegenerate hermitian matrix~$A$ over~$\Z[t^{\pm 1}]$ presents the same module as~$H$, then it is congruent to~$H$, i.e. $\overline{P}^TAP=H$ for some invertible matrix $P \in GL_2(\Z[t^{\pm 1}])$.
\end{theorem}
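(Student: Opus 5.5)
The plan is to use the hypothesis on the presented module to factor $A$ as $(t-1)$ times a unimodular form, and then to show that every such rank~$2$ unimodular form is hyperbolic.

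\emph{Step 1: divisibility by $t-1$ and unimodularity.} Since $A$ presents $\operatorname{coker}(H)\cong \Lambda/(t-1)\oplus\Lambda/(t-1)$, the Fitting ideals of $\operatorname{coker}(A)$ agree with those of $\operatorname{coker}(H)$:
\[
\operatorname{Fitt}_0=\bigl((t-1)^2\bigr), \qquad \operatorname{Fitt}_1=(t-1).
\]
The second equality says the entries of $A$ generate $(t-1)$, so every entry is divisible by $t-1$. Write $A=(t-1)B$. Using $\ol{t-1}=-t^{-1}(t-1)$, the hermitian condition $\ol A^T=A$ becomes $\ol B^T=-tB$, so $B$ is a $(-t)$-hermitian matrix. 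Since $\Lambda$ is a domain, the first equality gives $\det A=u(t-1)^2$ with $u$ a unit, so $\det B=u=\pm t^k$. Hence $B$ is unimodular, and it suffices to show that $B$ is congruent to $\bsm 0&1\\ -t&0\esm$, which is the matrix with $(t-1)\bsm 0&1\\ -t&0\esm=H$.

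\emph{Step 2: normal form of the entries.} Write $B=\bsm c_1&b\\ -t\ol b&c_2\esm$ with $c_i=-t\ol{c_i}$. I would first check that the solutions of $c=-t\ol c$ are exactly the elements of the form $c=x-t\ol x$, so each diagonal entry comes with a quadratic refinement $x_i$. This means $B$ is the symmetrisation of a $(-t)$-quadratic form, compatible with Theorem~\ref{thm:g=2}. Substituting $t=1$ gives $c_i(1)=-c_i(1)$, so $c_i(1)=0$ and both diagonal entries vanish at $t=1$. Then $\det B=-c_1c_2\cdot(\text{unit})$ up to the off-diagonal term, more precisely $\det B=c_1c_2+t\,b\ol b=\pm t^k$.

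\emph{Step 3 (main obstacle): an isotropic unimodular vector.} It suffices to find a unimodular $v\in\Lambda^2$ with $\ol v^TBv=0$ in the quadratic sense, i.e. $q(v)\in\{x-t\ol x\}$ trivial. Given such $v$, unimodularity of $B$ provides $w$ with $\ol v^TBw=1$, and one can then modify $w\mapsto w+\mu v$ to kill $\ol w^TBw$ using the quadratic refinement. This produces the hyperbolic basis.

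Finding $v$ is the genuine commutative-algebra content, and it cannot come from Bass's machinery since the Witt index there would be $1$. My plan is to solve the norm-type equation $c_1c_2+tb\ol b=\pm t^k$ directly. First I would analyse it modulo $t-1$ and modulo $t+1$, where it becomes an integer equation. Then I would run a Euclidean-type descent on the degrees (widths) of $c_1,b$ via the elementary congruences $v\mapsto v+\mu e_j$, aiming to reduce to the case where $c_1=0$. The hardest point I expect is showing that this degree reduction always succeeds over the non-Euclidean ring $\Z[t^{\pm1}]$. There I would try to exploit the symmetry $c=-t\ol c$, which forces the "leading" and "trailing" coefficients of the diagonal entries to be tied together, so that a suitable congruence strictly lowers the width of $c_1$.
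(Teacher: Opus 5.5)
\textbf{There is a genuine gap: your proposal reduces the theorem to an equivalent statement and leaves that statement unproved.}

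Your Step 1 is correct apart from a convention slip. From $\ol B^T=-tB$ one gets $B_{21}=-t^{-1}\ol b$ and $c_i=-t^{-1}\ol{c_i}$. The target is then $J=\bsm 0&1\\-t^{-1}&0\esm$, since $(t-1)\bsm 0&1\\-t&0\esm\neq H$. With this fixed, the diagonal entries lie in $(1-t^{-1})R$, and every such element has the form $y-t^{-1}\ol y$. So your completion argument does show that a unimodular isotropic vector for $B$ yields a congruence $B\sim J$.

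The difficulty is that this is not a reduction. Conversely, if $\ol P^TBP=J$ then $Pe_1$ is a unimodular isotropic vector. So the existence of $v$ is equivalent to Theorem~\ref{thm:main}, and all the content sits in the step you yourself call the main obstacle. For that step you give only a plan.

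That plan has no mechanism. The elementary congruence by $\bsm 1&0\\ \mu&1\esm$ replaces $c_1$ by $c_1+b\mu-t^{-1}\ol{b\mu}+c_2\mu\ol\mu$. Strictly lowering the width of $c_1$ this way needs divisibility relations among the integer extreme coefficients of $c_1,b,c_2$. The equation $c_1c_2+t^{-1}b\ol b=\pm t^k$ does not obviously supply these. A terminating descent would also realise the congruence by elementary and diagonal matrices, and over a $2$-dimensional ring like $\Z[t^{\pm1}]$ there is no a priori reason this should be possible. Your reductions modulo $t\pm1$ give only integer conditions (e.g.\ $b(1)=\pm1$), and you offer no way to assemble such local information into a global isotropic vector.

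The idea you are missing is to linearise the problem:
\begin{itemize}
\item Writing $A=HS$, the paper turns the congruence into $\sigma(Q)=SQ$ with $Q\in\SL_2(\La)$ and $\sigma(X)=D\ol XD^{-1}$. Because $\sigma$ is additive, this splits column by column: the columns must lie in the $R$-modules $E_1,E_2$ fixed by antilinear involutions, where $R=\Z[u]$.
\item The equation is solved locally at every maximal ideal of $R$ in four cases. The primes over $t=-1$ with $a\equiv-1$, and the prime $(2,u)$, need separate constructions.
\item These local solutions show that $E_1,E_2$ are projective, hence free, over $\Z[u]$.
\item The determinant is then corrected using $\alpha=h_-\cdot\colon E_2\to E_1$, whose cokernel is $\coker\alpha\cong R/(f)\cong\Z$, together with $R^\times=\{\pm1\}$.
\end{itemize}
This global input is exactly what replaces your hoped-for descent, and your proposal has no counterpart to it.
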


We note that Theorem~\ref{thm:main} is false without the assumption that $A$ be hermitian.
For example, the matrix~$B=\bsm 0&t-1 \\ t^{-1}-1 &t-1 \esm$ is nondegenerate,  presents the same module as~$H$,
but is not congruent to~$H$.
If it were,  say~$\overline{P}^TBP=H$ for some invertible matrix~$P$, then taking the conjugate transpose would give~$\overline{P}^T(B-\overline{B}^T)P=0$ and thus~$B=\overline{B}^T$, a contradiction.

\medbreak

We describe how Theorem~\ref{thm:ZTori} follows from Theorem~\ref{thm:main} combined with the cancellation results of Bass~\cite{Bass} (namely Theorem~\ref{thm:g=2}) and the prior work of the author and Powell~\cite{ConwayPowell}.
\begin{proof}[Proof of Theorems~\ref{thm:ZTori} and~\ref{thm:D4} assuming Theorem~\ref{thm:main}]
When $g=2$,  the result follows from Remark~\ref{rem:CancellationEnough} and Theorem~\ref{thm:g=2}, so we focus on the case where $g=1$.
Let $F \subset S^4$ be a torus with knot group~$\Z$.
The equivariant intersection form of the exterior of~$F \subset S^4$ is represented by a size $2$ nondegenerate hermitian matrix over~$\Z[t^{\pm 1}]$ that presents the same module as $H$~\cite[Lemmas~3.2,5.5 and Sections~6-7]{ConwayPowell}.
Theorem~\ref{thm:main} implies that this form is isometric to the hermitian form determined by~$H$, i.e. to the equivariant intersection form of the exterior of the unknotted torus~$U$; see e.g.~\cite[Lemma~6.1]{ConwayPowell}.
It then follows from~\cite[Theorem~1.4]{ConwayPowell} that $F$ is unknotted.
The proof of Theorem~\ref{thm:D4} is entirely analogous but,  in the final step,  apply~\cite[Theorem~1.3]{ConwayPowell} instead of~\cite[Theorem~1.4]{ConwayPowell}.
\end{proof}

\subsection{Outline of the proof of Theorem~\ref{thm:main}. }
Endow the ring~$\Lambda:=\Z[t^{\pm 1}]$ with the involution~$-\colon \Lambda \to \Lambda $ induced by~$t \mapsto t^{-1}$.
Proposition~\ref{prop:A1=0Automatic} shows that if a nondegenerate size~$2$ hermitian matrix~$A$ over~$\Lambda$ presents the same module as~$H$,  then~$A$ must be of the form
\begin{equation*}
 A = \begin{pmatrix} c(t-1)(t^{-1}-1) & a(t-1) \\ \overline{a}(t^{-1}-1) & b(t-1)(t^{-1}-1) \end{pmatrix} 
 \end{equation*}
for some~$a, b, c \in \Lambda$ with~$b = \overline{b},c = \overline{c}, a(1)=\pm 1$ and~$a\overline{a}-bc(t-1)(t^{-1}-1)=1$. 
 
The congruence problem underlying Theorem~\ref{thm:main}, namely whether it is possible to find a matrix~$P \in GL_2(\Lambda)$ such that $\overline{P}^THP=A$,  is quadratic in the columns of $P$.
The first step of the proof (carried out in Section~\ref{sec:cocycle}) consists of reformulating the equation into one that is linear in the columns of the unknown matrix.
For this,  write
$$S:=\begin{pmatrix} \ol a&b(t-1)\\c(t^{-1}-1)&a\end{pmatrix},  \quad
D:=\begin{pmatrix} t&0\\0&1\end{pmatrix}$$
 and consider the following additive involution on the set $M_2(\Lambda)$ of size $2$ matrices over $\Lambda$:
\begin{align*}
\sigma \colon M_2(\Lambda) &\to M_2(\Lambda) \\
X &\mapsto D\ol X D^{-1}.
\end{align*}
Section~\ref{sec:cocycle} shows (via a fairly straightforward matricial calculation) that without loss of generality one can assume that $a(1)=1$, and that Theorem~\ref{thm:main} follows if there is a~$Q\in\SL_2(\La)$ such that
$$\sigma(Q)=SQ.$$
A further computation shows that a (possibly singular) matrix $Q=(q_1,q_2)$ satisfies this equation if and only if its columns respectively satisfy~$D\ol q_1=tSq_1$ and~$D\ol q_2=Sq_2$.
Setting~$u:=(t-1)(t^{-1}-1)$ and $R:=\Z[u]$, it is therefore natural to consider the solution $R$-modules 
\begin{align*}
&E_1=\{v\in\La^2: D\ol v=tSv\}=\{ v \in \Lambda^2 : t^{-1}S^{-1}D\ol v=v \} \\
&E_2=\{w\in\La^2:D\ol w=Sw\}=\{w\in\La^2 : S^{-1}D\ol w=w\}.
\end{align*}
Note that the assignments~$q_1 \mapsto t^{-1}S^{-1}D\ol q_1$ and~$q_2 \mapsto S^{-1}D\ol q_2$ are additive and~$R$-linear but not~$\Lambda$-linear.
Thus, $E_1$ and $E_2$ are $R$-modules but not $\Lambda$-modules.

Showing that the~$E_i$ are nontrivial produces nontrivial (but possibly singular) matrices~$Q$ that satisfy~$\sigma(Q)=SQ$.
Section~\ref{sec:global} shows that~$E_1$ and~$E_2$ are actually~$R$-free of rank~$2$ and proves that it is then possible to construct a matrix~$Q$ that additionally satisfies~$\det(Q)=1$.
This argument, as well as the fact that~$E_i \cong R^2$ for~$i=1,2$,  is obtained by establishing the corresponding results after localising at every maximal ideal~$\mathfrak{m} \subset R$.
The key ingredient in both cases is the resolution of a localised version of the equation~$\sigma(Q)=SQ$: Section~\ref{sec:Local} shows that for every maximal ideal~$\mathfrak{m} \subset R$, there is a matrix~$Q_{\mathfrak m}\in\GL_2(\La_{\mathfrak m})$ that satisfies~$\sigma(Q_{\mathfrak m})=SQ_{\mathfrak m}.$
Setting~$f:=(1+t)(1+t^{-1})=4-u$, this is proved by considering four cases (which are shown to be exhaustive in Remark~\ref{rem:Casework}):
\begin{enumerate}
\item Case 1: $u \in \mathfrak{m}$ and $2 \notin \mathfrak{m}$.
In this case,  the matrix~$1+S$ is shown to be invertible over $\Lambda_{\mathfrak{m}}$ and a solution is given by~$Q_{\mathfrak{m}}:=(1+S)^{-1}$.
\item Case 2: $f \in \mathfrak{m}$ and $2 \notin \mathfrak{m}$.
When~$a \neq -1 \in R_{\mathfrak{m}}/\mathfrak{m}R_{\mathfrak{m}}$,  one can again take~$Q_{\mathfrak{m}}:=(1+S)^{-1}$.
When~$a = -1 \in R_{\mathfrak{m}}/\mathfrak{m}R_{\mathfrak{m}}$,  one sets~$W:=\bsm 0&-1 \\ 1&0  \esm$ and takes $Q_{\mathfrak{m}}:=W(I+\sigma(W)^{-1}SW)^{-1}$.
\item Case 3: $\mathfrak{m}=(2,u)$.
Here one shows that $E_1^{\mathfrak{m}}:=\{ v \in \Lambda^2_{\mathfrak{m}} : t^{-1}S^{-1}D\ol v=v \}$ contains a unimodular element $q_1$ that can then be completed to the required~$Q_{\mathfrak{m}}:=(q_1,q_2)$.
\item Case 4: $uf \notin \mathfrak{m}$.
Here, one shows that~$E_2^{\mathfrak{m}}:=\{w\in \La_{\mathfrak m}^2: S^{-1}D\ol w=w\}$ is free of rank two over~$R_{\mathfrak m}$, and picking a basis $m_1,m_2 \in E_2^{\mathfrak{m}}$, one takes~$Q_{\mathfrak{m}}:=((1+t^{-1})m_1\ \ m_2)$.
\end{enumerate}   
The details of these arguments can be found in Sections~\ref{sub:1}-\ref{sub:4}.

\begin{remark}
Given~$g \geq 1$, it is natural to ask whether the proof of Theorem~\ref{thm:main} adapts to show that a nondegenerate size~$2g$ hermitian matrix~$A$ that presents the same module as $H^{\oplus g}$ is necessarily congruent to~$H^{\oplus g}$.
Just as in the~$g=1$ case,  it is possible to factor~$A$ as~$A=SH^{\oplus g}$ for some matrix~$S$ and to reduce the problem to solving an equation of the form~$\sigma(Q)=SQ$ for some involution~$\sigma$.
The key point is that this involution is additive if and only if~$g=1$
 and this property of $\sigma$ is used frequently and crucially during the proof, e.g. to study the equation~$\sigma(Q)=SQ$ one column at a time, leading to linear equations.
\end{remark}

\subsection*{AI Disclosure.}

In genus $g=2$,  conversations with ChatGPT 5.6 were used to summarise and reformulate notation-heavy portions of~\cite{Bass}. 
During these conversations,  the AI pointed the author to~\cite[Proposition IV.3.9]{Bass} and suggested that, combined with  it~\cite[IV.3.2(b)]{Bass}, it could be used to improve the range in Proposition~\ref{prop:ConwayPowell73}.

In order to record how AI was used in genus~$g=1,$ we first describe how the unknotting conjecture can be reformulated algebraically.
This reformulation (and its proof) did not involve AI but provides context for the prompt that led to Theorem~\ref{thm:main}.
Endow the ring~$\Lambda:=\Z[t^{\pm 1}]$ with the involution induced by~$t \mapsto t^{-1}$, and let~$\Q(t)$ be the field of fractions of~$\Lambda$.
Given a size~$n$ hermitian nondegenerate matrix~$A$ over~$\Lambda$,  set 
$$M(A):=\Lambda^n/A\Lambda^n,$$
and consider the hermitian \emph{boundary linking form} of $A$:
$$
\ell_A \colon M(A) \times M(A) \to \Q(t)/\Lambda, \quad
([x],[y]) \mapsto \overline{x}^TA^{-1}y.
$$
The unknotting conjecture can now be reformulated as follows.

\begin{proposition}
\label{prop:Equiv}
Given an integer $g \geq 0$, the following assertions are equivalent.
\begin{enumerate}
\item Every genus $g$ surface~$F \subset S^4$ with knot group $\Z$ is unknotted.
\item  Let~$A$ be a nondegenerate size~$2g$ hermitian matrix over~$\Lambda$ such that~$A|_{t=1}$ is the zero matrix.
If~$A$ and~$H^{\oplus g}$ have isometric boundary linking forms,  $\ell_A \cong \ell_{H^{\oplus g}}$,  then~$A$ and~$H^{\oplus g}$ are congruent over~$\Lambda$.
\end{enumerate}
\end{proposition}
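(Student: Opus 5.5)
The equivalence will be obtained by passing through~\cite[Theorem~1.4]{ConwayPowell}: a genus~$g$ surface $F\subset S^4$ with knot group~$\Z$ is unknotted if and only if $\lambda_{X_F}\cong H^{\oplus g}$. So assertion~(1) is equivalent to the purely algebraic claim that every equivariant intersection form of such an exterior is congruent to $H^{\oplus g}$. It then suffices to show that the class of hermitian matrices arising as $\lambda_{X_F}$ is exactly the class of matrices in~(2).

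\textbf{(2)$\Rightarrow$(1).} Let $F$ be a genus~$g$ surface with knot group $\Z$. By~\cite[Lemmas~3.2, 5.5 and Sections~6--7]{ConwayPowell}, $\lambda_{X_F}$ is represented by a nondegenerate size~$2g$ hermitian matrix $A$. Since the infinite cyclic cover has $H_2(X_F;\Lambda)$ free of rank~$2g$, augmenting recovers the ordinary intersection form of $X_F$. This form vanishes, because $H_2(X_F;\Z)$ is carried by tori on the boundary $F\times S^1$. Hence $A|_{t=1}=0$.

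Next, $M(A)=\coker(A)$ is identified with $H_1(\partial X_F;\Lambda)$, and $\ell_A$ with the Blanchfield form of $\partial X_F=F\times S^1$. This identification is the standard one for a $4$-manifold whose equivariant $H_1$ vanishes, together with the long exact sequence of the pair. Since $\partial X_F\cong F\times S^1$ with the same map to $S^1$ as for the unknot $U$, the boundary linking forms agree: $\ell_A\cong \ell_{H^{\oplus g}}$, using that $H^{\oplus g}$ represents $\lambda_{X_U}$ by~\cite[Lemma~6.1]{ConwayPowell}. Assertion~(2) then gives $A\cong H^{\oplus g}$, and~\cite[Theorem~1.4]{ConwayPowell} shows that $F$ is unknotted.

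\textbf{(1)$\Rightarrow$(2).} This is a realisation argument. Given $A$ as in~(2), the aim is to build a genus~$g$ surface $F$ with knot group $\Z$ whose exterior has $\lambda_{X_F}\cong A$. Then~(1) makes $F$ unknotted, so $A\cong\lambda_{X_U}\cong H^{\oplus g}$.

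To construct $F$:
\begin{enumerate}
\item Fix an isometry $\ell_A\cong\ell_{H^{\oplus g}}$ of boundary linking forms.
\item Use the realisation theorem for forms over $\Lambda$ with fixed boundary, as in~\cite{ConwayPowell} (following the approach of Boyer and of Conway--Powell for discs). This produces a topological $4$-manifold $X$ with $\pi_1(X)=\Z$, boundary $F_g\times S^1$, and equivariant intersection form $A$. Freedman's surgery applies since $\Z$ is a good group.
\item Glue $F_g\times D^2$ to $X$ along the boundary to obtain a closed manifold $Z$.
\item Show that $Z$ is a homotopy $4$-sphere, hence homeomorphic to $S^4$ by Freedman, so that $F=F_g\times\{0\}$ is the desired surface.
\end{enumerate}
For the third step, the condition $A|_{t=1}=0$ makes $H_2(X;\Z)$ match the boundary tori, which is what forces $Z$ to be a homology sphere. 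Simple connectivity of $Z$ follows from van Kampen, since the meridian normally generates $\pi_1(X)=\Z$.

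\textbf{Main obstacle.} The realisation step is the hardest part. It must produce $X$ with boundary exactly $F_g\times S^1$, compatibly with the given isometry of linking forms. The fixed isometry $\ell_A\cong\ell_{H^{\oplus g}}$ has to be promoted to a homeomorphism of boundaries, and one must verify that the Blanchfield-form data of $F_g\times S^1$ determine the boundary up to the required equivalence. My first attempt would be to cite directly the realisation result of~\cite{ConwayPowell} used for closed surfaces, checking that its hypotheses reduce precisely to $A|_{t=1}=0$ and $\ell_A\cong\ell_{H^{\oplus g}}$.
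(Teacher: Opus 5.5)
Your proposal follows the paper's route: both directions go through \cite[Theorem~1.4]{ConwayPowell} and \cite[Lemma~6.1]{ConwayPowell} together with a characterisation of which matrices arise as $\lambda_{X_F}$. Your direct check of $A|_{t=1}=0$ and $\ell_A\cong\ell_{H^{\oplus g}}$ for (2)$\Rightarrow$(1), and your gluing-plus-Freedman sketch for (1)$\Rightarrow$(2), are correct unpackings of the result the paper simply cites. That result is \cite[Theorem~1.11]{ConwayPiccirilloPowell}, not a theorem of \cite{ConwayPowell}; it gives exactly that a nondegenerate size~$2g$ hermitian $A$ is realised by a genus~$g$ $\Z$-surface exterior in $S^4$ if and only if $A|_{t=1}=0$ and $\ell_A\cong\ell_{H^{\oplus g}}$. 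Your ``main obstacle'' is therefore not an obstacle: the boundary $F_g\times S^1$ and its map to $S^1$ are fixed in advance and the isometry of linking forms is an input, so it never needs to be promoted to a homeomorphism of boundaries.
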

\begin{proof}
We start with three recollections.
First,~\cite[Theorem 1.11]{ConwayPiccirilloPowell} implies that a size~$2g$ nondegenerate hermitian matrix~$A$ over $\Lambda$ arises as the equivariant intersection form of the exterior of a~$\Z$-surface~$F \subset S^4$ of genus~$g$ if and only if~$A|_{t=1}=0$ and~$\ell_A\cong \ell_{H^{\oplus g}}$.
Secondly,~\cite[Theorem~1.4]{ConwayPowell} implies that closed~$\Z$-surfaces in simply-connected~$4$-manifolds are determined up to equivalence by the equivariant intersection form of their surface exteriors.
Thirdly, the equivariant intersection form of the genus~$g$ unknot~$F \subset S^4$ is represented by~$H^{\oplus g}$~\cite[Lemma 6.1]{ConwayPowell}.

We prove that~$(1) \Rightarrow (2)$ holds.
Apply~\cite[Theorem 1.11]{ConwayPiccirilloPowell}  to represent the matrix~$A$ as the equivariant intersection form of a~$\Z$-surface~$F \subset S^4$.
By~$(1)$,  this surface is unknotted and it therefore follows that~$A \cong H^{\oplus g}$.
To see that~$(2) \Rightarrow (1)$,  note that~$(2)$ implies that the equivariant intersection form of a~$\Z$-surface~$F \subset S^4$ necessarily agrees with that of the unknotted surface from which it follows, using~\cite[Theorem~1.4]{ConwayPowell}, that~$F$ is unknotted.
\end{proof}

The author prompted several models on the algebraic reformulation of the unknotting conjecture described in Proposition~\ref{prop:Equiv}.
Google's Gemini developed an algebraic proof (not included in this article) that, when~$g=1$,  the condition~$A|_{t=1}=0$ and the linking form condition~$\ell_A \cong \ell_H$ follow from the requirement that~$M(A) \cong M(H)$.
This led the author to prompts related to the statement of Theorem~\ref{thm:main}.
An initial proof was provided by ChatGPT 5.6.
The author then verified,  reorganised and revised that proof; the exposition in the present article is the author’s.

\medbreak
Finally, we note that at the time of writing, feeding the statement of Theorem~\ref{thm:ZTori} directly into (publicly available) AI models does not lead to a proof of the result.
The same is true for the algebraic reformulation of Proposition~\ref{prop:Equiv} as well as for attempts at generalising the cancellation result from Proposition~\ref{prop:ConwayPowell73} (without specialising to~$\varepsilon=-t$ or directly requesting a thorough search of the work of Bass~\cite{Bass}) to a statement involving $\operatorname{ind}(V',\theta') \geq g$ for $g \in \{1,2\}$.

\subsection*{A personal note}
One can debate whether this paper should exist given its heavy use of AI.
Opinions will differ but I would like to conclude this introduction by outlining the decision to post the article to the arXiv.
I have been thinking about the unknotting conjecture for over 8 years.
As described in Proposition~\ref{prop:Equiv},  prior work from~\cite{ConwayPowell,ConwayPiccirilloPowell} shows that the problem in the orientable case can be reduced to a question about $2 \times 2$ and $4 \times 4$ matrices over $\Z[t^{\pm 1}]$ that, since~2026, is amenable to being solved using AI.
Given the rate of progress of AI models and the worrying trend of theorems not being posted to the arXiv, I have come to the conclusion that the present outcome 
(a publicly available human write-up, posted to the arXiv,  with a clear AI disclosure, and a digestion of the proof) is preferable to the following alternative: in the near future, 
 a passerby feeds the statement of Theorem~\ref{thm:ZTori} directly into a more advanced (possibly non-publicly available) AI model,
an answer is generated and is then posted without any further edits somewhere that is difficult to access.

\subsection*{Organisation}
The remainder of this paper is devoted to the proof of Theorem~\ref{thm:main}.
Section~\ref{sec:cocycle} reformulates its statement in the form of an equation of the form $\sigma(Q)=SQ$ where the matrix~$S$ and the involution $\sigma$ are respectively defined in Notation~\ref{not:MatrixS} and~\ref{not:Involutionsigma}.
Section~\ref{sec:Local} solves local versions of this equation, whereas Section~\ref{sec:global} then obtains a global solution from the local ones.
The short Appendix~\ref{sec:fixed} collects some properties of the norm subring of $\Z[t^{\pm 1}].$

\subsection*{Acknowledgments}
The author was partially supported by the NSF grant DMS~2303674.

\subsection*{Conventions}
Given a ring~$R$ with involution, we call a map~$f \colon M \to N$ of $R$-modules \emph{antilinear} if it is additive and satisfies~$f(rm)=\overline{r}f(m)$ for every~$r \in R$ and every~$m \in M$.

\section{Set-up and reformulation}
\label{sec:cocycle}

The goal of this section is to reduce Theorem~\ref{thm:main} to an equality of matrices that is (anti)linear in the columns of the unknown matrix (Theorem~\ref{thm:splitting}).
To begin with, we obtain a normal form for nondegenerate hermitian matrices that present the same module as $H$.

\begin{proposition}
\label{prop:A1=0Automatic}
If a nondegenerate size~$2$ hermitian matrix~$A$ over~$\Lambda:=\Z[t^{\pm 1}]$ presents the same module as~$H$,  then~$A$ must be of the form
\begin{equation}
\label{eq:normal-form}
 A = \begin{pmatrix} c(t-1)(t^{-1}-1) & a(t-1) \\ \overline{a}(t^{-1}-1) & b(t-1)(t^{-1}-1) \end{pmatrix} 
 \end{equation}
for some~$a, b, c \in \Lambda$ with~$b = \overline{b},c = \overline{c}, a(1)=\pm 1$ and~$a\overline{a}-bc(t-1)(t^{-1}-1)=1$.
\end{proposition}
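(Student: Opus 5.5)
The plan is to extract information from the isomorphism $M(A):=\Lambda^2/A\Lambda^2\cong \Lambda^2/H\Lambda^2$ using two module invariants: the reduction modulo~$(t-1)$, and the zeroth Fitting ideal. The target module is $\Lambda/(t-1)\oplus\Lambda/(t^{-1}-1)\cong \Z^2$, with $t$ acting trivially.

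\emph{Step 1: all entries of $A$ are divisible by $t-1$.} Tensor the presentation with $\Lambda/(t-1)\cong\Z$. The cokernel of $A|_{t=1}$, viewed as a $2\times 2$ integer matrix, is then $M(A)\otimes_\Lambda \Z\cong \Z^2$. An integer $2\times 2$ matrix has cokernel $\Z^2$ only if it is zero, so $A|_{t=1}=0$ and every entry of $A$ lies in $(t-1)$. In particular the off-diagonal entry can be written $a(t-1)$, and by hermitianity the other off-diagonal entry is $\ol a(t^{-1}-1)$.

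\emph{Step 2: the diagonal entries are divisible by $u:=(t-1)(t^{-1}-1)$.} A diagonal entry $p$ satisfies $p=\ol p$ and $p(1)=0$. Differentiating $p(t)=p(t^{-1})$ at $t=1$ gives $p'(1)=-p'(1)$, so $p'(1)=0$ and $(t-1)^2\mid p$. Since $u=-t^{-1}(t-1)^2$, we get $p=cu$ for some $c$. Because $u$ is symmetric and $\Lambda$ is a domain, $c=\ol c$. The same argument produces $b$ with $b=\ol b$, so $A$ has the shape~\eqref{eq:normal-form}.

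\emph{Step 3: the determinant relation.} The zeroth Fitting ideal is an invariant of the module, so $(\det A)=(\det H)=(u)$. Since $A$ is nondegenerate, $\det A\neq 0$, and hence $\det A=\pm t^k u$. On the other hand,
\[
\det A=u\,(bcu-a\ol a).
\]
Thus $bcu-a\ol a=\pm t^k$. The left-hand side is symmetric under the involution, which forces $k=0$. Evaluating at $t=1$ gives $-a(1)^2=\pm1$, so the sign is $-1$ and $a(1)=\pm1$. Hence $a\ol a-bc\,u=1$, as claimed.

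I expect the only points requiring care to be the invariance of Fitting ideals under isomorphism, which is standard, and the symmetric-divisibility argument in Step~2. The rest is bookkeeping.
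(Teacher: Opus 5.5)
Your proof is correct and follows essentially the paper's route. The paper also compares zeroth elementary ideals to get $\det A$ up to a unit, then evaluates at $t=1$ to fix the sign and obtain $a(1)=\pm 1$. The one difference is how you show every entry is divisible by $t-1$: the paper uses the first elementary ideal (the ideal generated by the entries of $A$ must equal $\langle t-1\rangle$), while you reduce modulo $(t-1)$, which amounts to the same thing since Fitting ideals commute with base change.

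Your write-up is more careful than the paper's in two places. You actually justify why a symmetric multiple of $t-1$ is a multiple of $u=(t-1)(t^{-1}-1)$, which the paper only asserts. You also use the symmetry of $bcu-a\overline{a}$ to rule out a unit $\pm t^k$ with $k\neq 0$, whereas the paper writes $\det(A)=\pm\det(H)$ directly.
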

\begin{proof}
Isomorphic finitely presented~$\Lambda$-modules have equal elementary ideals (these are also often called \emph{Fitting ideals}). 
Since the module~$M(H)$ is presented by~$H$, its~$0$-th elementary ideal is~$\langle \det(H) \rangle = \langle -(t-1)(t^{-1}-1) \rangle$ and its first elementary ideal is generated by the~$1 \times 1$ minors (the entries of~$H$), and is thus~$\langle 0, t-1, t^{-1}-1 \rangle = \langle t-1 \rangle$.

If~$A = \bsm \gamma & \alpha \\ \overline{\alpha} & \beta \esm$ is hermitian (which implies~$\gamma = \overline{\gamma}$ and~$\beta = \overline{\beta}$), then~$M(A) \cong M(H)$ implies equality of elementary ideals,  so~$\det(A)=\pm \det(H)=\varepsilon (t-1)(t^{-1}-1)$ with $\varepsilon=\pm 1$ and 
\[ \langle \gamma, \alpha, \overline{\alpha}, \beta \rangle = \langle t-1 \rangle. \]
This equality of ideals requires that every entry of~$A$ must be a multiple of~$t-1$. 
Factor~$\alpha \in \langle t-1 \rangle$ as~$\alpha = a(t-1)$ for some~$a \in \Lambda$. 
For the diagonal entries,~$\gamma \in \langle t-1 \rangle$ and~$\gamma=\overline{\gamma}$ are seen to imply that~$\gamma = c(t-1)(t^{-1}-1)$ for some symmetric polynomial~$c = \overline{c}$.
By the exact same reasoning,~$\beta \in \langle t-1 \rangle$ and~$\beta = \overline{\beta}$ forces~$\beta = b(t-1)(t^{-1}-1)$ for some symmetric polynomial~$b = \overline{b}$.

Combining these calculations,  we deduce that
$$ \varepsilon (t-1)(t^{-1}-1)=\det(A)=-(t-1)(t^{-1}-1)(a\ol a-(t-1)(t^{-1}-1)bc).$$
Dividing by~$(t-1)(t^{-1}-1)$ and setting $t=1$ yields $\varepsilon=-a(1)^2$ from which it follows that $\varepsilon=-1$.
This shows that $a(1)=\pm 1$ and~$a\overline{a}-bc(t-1)(t^{-1}-1)=1$, concluding the proof of the proposition.
\end{proof}

\begin{remark}
The condition~$a(1)=\pm1$ is in fact implied by~$a\ol a-bc(t-1)(t^{-1}-1)=1$.
Indeed, evaluating the latter at~$t=1$ and using $a(1)=\ol a(1)$, gives
$a(1)^2=1$.  
We nevertheless record the condition~$a(1)=\pm1$ since it will come up frequently in the sequel.
\end{remark}

As announced above, we now recast the equation $\overline{P}^THP=A$ into an equation that is (anti)linear in the columns of the unknown matrix.
This requires some notation.

\begin{notation}
\label{not:MatrixS}
For brevity, we set
\[
 x:=t-1,\qquad
 u:=x\overline{x}=2-t-t^{-1}.
\]
Next, consider the matrices
\[
 S:=\begin{pmatrix}\ol a&bx\\c\overline{x}&a\end{pmatrix},\qquad
 D:=\begin{pmatrix}t&0\\0&1\end{pmatrix}.
\]
A direct calculation gives
\[
 HS=
 \begin{pmatrix}0&x\\\overline{x}&0\end{pmatrix}
 \begin{pmatrix}\ol a&bx\\c\overline{x}&a\end{pmatrix}
 =\begin{pmatrix}cx\overline{x}&ax\\\ol a \overline{x}&bx\overline{x}\end{pmatrix}=A.
\]
Also, the final identity in Proposition~\ref{prop:A1=0Automatic} gives
\begin{equation}\label{eq:HS}
\det(S)=a\ol a-bcx\overline{x}=1.
\end{equation}
\end{notation}

\begin{notation}
\label{not:Involutionsigma}
Consider the following involution on the set~$M_2(\La)$ of $2\times 2$ matrices over $\Lambda$:
$$
 \sigma(X):=D\ol X D^{-1}.
$$
We record the basic properties of~$\sigma$, all of which will be used
repeatedly.  
The map~$\sigma$ is a ring automorphism of~$M_2(\La)$: it is
additive, multiplicative and unital. 
 In particular, for every invertible matrix~$X\in GL_2(\Lambda)$, we have
 $$\sigma(X^{-1})=\sigma(X)^{-1}.$$
Since~$b=\ol b$ and~$c=\ol c$,  a direct calculation using~$t\overline{x}=-x$ and~$t^{-1}x=-\overline{x}$ gives
\[
 \sigma(S)=D\ol S D^{-1}
 =\begin{pmatrix}a&t\,b \overline{x}\\t^{-1}c x&\ol a\end{pmatrix}
 =\begin{pmatrix}a&-bx\\-c\overline{x}&\ol a\end{pmatrix}.
\]
Since the last matrix is the adjugate of~$S$ and~$\det(S)=1$, we obtain
\begin{equation}\label{eq:cocycle}
 \sigma(S)=S^{-1}.
\end{equation}
\end{notation}

As we explain below,  Theorem~\ref{thm:main} is a consequence of the following result.

\begin{theorem}\label{thm:splitting}
When~$a(1)=1$, there is a matrix~$Q\in\SL_2(\La)$ such that
\begin{equation}\label{eq:split}
 \sigma(Q)=SQ.
\end{equation}
\end{theorem}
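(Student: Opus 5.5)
We follow the local-to-global strategy announced in the outline: solve the equation $\sigma(Q)=SQ$ over localisations of the norm subring, then glue. Set $R:=\Z[u]$; this is the fixed ring of the involution, and $\La$ is a free $R$-module of rank two with basis $1,t$. The equation $\sigma(Q)=SQ$ is linear over $R$. Writing $Q=(q_1,q_2)$ and using that $D^{-1}$ acts on columns by scaling, we get $\sigma(Q)=(D\ol q_1 t^{-1},\,D\ol q_2)$. So the equation splits into the two $R$-linear (antilinear over $\La$) conditions $D\ol q_1=tSq_1$ and $D\ol q_2=Sq_2$. The corresponding solution modules $E_1,E_2\subset \La^2$ are $R$-submodules of $\La^2\cong R^4$. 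They are cut out as kernels of $R$-linear maps $R^4\to R^4$, so they are finitely generated. Since $R$ is a Noetherian domain of dimension $2$, the kernels are reflexive, hence projective over the regular two-dimensional ring $R\cong\Z[u]$ whenever they are locally free.

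The first step is to solve the equation locally. For each maximal ideal $\mathfrak m\subset R$ we will produce $Q_{\mathfrak m}\in\GL_2(\La_{\mathfrak m})$ with $\sigma(Q_{\mathfrak m})=SQ_{\mathfrak m}$, splitting into the four cases of Remark~\ref{rem:Casework}.

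The guiding idea is Hilbert~90. The relation $\sigma(S)=S^{-1}$ from \eqref{eq:cocycle} says $S$ is a $1$-cocycle for the $\Z/2$-action $\sigma$. Hence $(1+S)^{-1}$, whenever it exists, is a solution: from $\sigma(1+S)=1+S^{-1}=S^{-1}(1+S)$ we get
\[
\sigma\bigl((1+S)^{-1}\bigr)=(1+S)^{-1}S=S(1+S)^{-1}.
\]
The same identity with $S$ replaced by the twisted cocycle $\sigma(W)^{-1}SW$ handles the points where $1+S$ degenerates. Over points where $\La_{\mathfrak m}/R_{\mathfrak m}$ is étale ($uf\notin\mathfrak m$), Galois descent shows $E_2^{\mathfrak m}\otimes\La\cong\La^2_{\mathfrak m}$, so the module is free of rank $2$. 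At the ramified points $u\in\mathfrak m$ and $f\in\mathfrak m$, and especially at $\mathfrak m=(2,u)$, we instead compute directly modulo $\mathfrak m$ and lift by Nakayama. The normalisation $a(1)=1$ enters here: at $u\in\mathfrak m$ we have $S\equiv\bsm 1&0\\ *&1\esm$, so $\det(1+S)$ is a unit when $2\notin\mathfrak m$.

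The second step deduces from the local solutions that $E_1$ and $E_2$ are locally free of rank $2$, hence projective. Over $\Z[u]$, projectives are free by Quillen--Suslin, so $E_i\cong R^2$. We then choose $q_1\in E_1$ and $q_2\in E_2$ so that $\det(q_1,q_2)=1$. Note that $\det Q$ satisfies $\ol{\det Q}=t^{-1}\det Q$, since $\det\sigma(Q)=\ol{\det Q}$ and $\det S=1$. Tensoring, $\det$ therefore gives an $R$-bilinear pairing
\[
E_1\times E_2\to N:=\{z\in\La:\ol z=t^{-1}z\},
\]
and $N$ is a free $R$-module of rank one. Locally the existence of $Q_{\mathfrak m}$ shows that the induced map $\Lambda^2_R E_1\otimes E_2\to N$ is surjective. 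We then want a pair $(q_1,q_2)$ hitting a generator of $N$. That generator then needs to be rescaled to $1$, which requires adjusting by $(1+t^{-1})$-type factors exactly as in Case~4, and using $a(1)=1$ rather than $-1$ to get the sign right.

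I expect this second step to be the main obstacle. The determinant map is quadratic in $E_1\oplus E_2$, so surjectivity does not immediately yield a single decomposable preimage of $1$. My first attempt would be to fix a basis and view $\det$ as a $2\times2$ matrix over $R$ with unit-ideal entries, and then use that $\SL_2(R)$ acts on $E_1$ and $E_2$ independently to reduce to a unimodular-row problem over $\Z[u]$. That problem is solvable because unimodular rows of length $2$ over $\Z[u]$ are completable.
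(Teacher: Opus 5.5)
There is a genuine gap, at exactly the step you flag as the main obstacle, and it starts with a computational slip.

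\textbf{The determinant pairing.} For $q_1\in E_1$, $q_2\in E_2$ and $Q=(q_1\ q_2)$, the relation $\sigma(Q)=SQ$ gives $\ol{\det Q}=\det\sigma(Q)=\det(S)\det Q=\det Q$. So the pairing $E_1\times E_2\to\La$ takes values in the fixed ring $R$, not in $N=\{z:\ol z=t^{-1}z\}=h_+R$. If it landed in $h_+R$, no $Q$ with $\det Q=1$ could exist, since $1\notin h_+R$. Consequently there is no generator of $N$ to ``rescale to $1$''. Also, $a(1)=1$ plays no role at this stage: a sign is absorbed by replacing $q_2$ with $-q_2$.

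\textbf{Why your proposed reduction fails.} The real problem is this: if $M$ is the $2\times 2$ matrix of the pairing in $R$-bases of $E_1,E_2$, find $p,q\in R^2$ with $p^TMq=1$. Completability of unimodular rows of length $2$ holds trivially over every commutative ring, so it cannot be the content. The difficulty is finding $p$ with $p^TM$ unimodular, and the fact that the entries of $M$ generate the unit ideal is not enough. If $p^TMq=1$, then $M$ is equivalent to $\operatorname{diag}(1,\det M)$, so $\coker(M)$ must be cyclic. Unit content only makes $\coker(M)$ an invertible $R/(\det M)$-module. For example, the non-principal ideal $(2,1+\sqrt{-5})$ of $R/(u^2+5)\cong\Z[\sqrt{-5}]$ is the cokernel of such an $M$: its first syzygy over the regular $2$-dimensional ring $R$ is projective, hence free. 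For that $M$, the value $1$ is never attained.

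\textbf{What is missing.} The paper uses the specific arithmetic of this situation, extracted via $\alpha\colon E_2\to E_1$, $w\mapsto h_-w$.
\begin{enumerate}
\item In the local bases $q_1^{\mathfrak m},h_-q_2^{\mathfrak m}$ of $(E_1)_{\mathfrak m}$ and $h_+q_1^{\mathfrak m},q_2^{\mathfrak m}$ of $(E_2)_{\mathfrak m}$, $\alpha$ is $\operatorname{diag}(f,1)$.
\item Hence $\coker(\alpha)$ is killed by $f$ and is locally free of rank one over $R/(f)\cong\Z$, so it is $\cong\Z$.
\item The surjection $E_1\to\Z$ is given on a basis by a primitive integer vector. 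An $\SL_2(\Z)\subset\SL_2(R)$ change of basis normalises it, producing $e_1,e_2$ with $\im(\alpha)=fRe_1\oplus Re_2$.
\item Since $\det\colon E_1\wedge_RE_1\to h_-R$ is an isomorphism, $\det(e_1,e_2)=\pm h_-$. Then $q_1=e_1$, $q_2=\pm\alpha^{-1}(e_2)$ gives $\det(q_1,q_2)=1$.
\end{enumerate}
In your language: $\det M=\pm f$, and because $R/(f)\cong\Z$ the cokernel is $\Z$ and the surjection onto it can be normalised by $\SL_2(\Z)$. That is exactly what makes $1$ a value of $p^TMq$.

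\textbf{A second gap, at $\mathfrak m=(2,u)$.} Your plan there (``compute modulo $\mathfrak m$ and lift by Nakayama'') is not an argument. Taking $\tau_i$-fixed vectors does not commute with reduction modulo $\mathfrak m$ at this ramified prime of residue characteristic $2$, so a residual solution need not lift. The paper instead proceeds as follows:
\begin{enumerate}
\item It takes the explicit $\tau_1^S$-fixed vector $v=(b,(1-\ol a)/x)^T$, or its analogue for $S^W$.
\item It shows that one of the two is unimodular, using $a(1)=1$.
\item It completes that vector to $Q_{\mathfrak m}\in\SL_2(\La_{\mathfrak m})$ by correcting a second column $w$ by a suitable multiple $trv$ of $v$.
\end{enumerate}
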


We show how Theorem~\ref{thm:splitting} implies Theorem \ref{thm:main} from the introduction.

\begin{proof}[Proof of Theorem \ref{thm:main} assuming Theorem~\ref{thm:splitting}]
We first assume that~$a(1)=1$.
Let~$Q\in\SL_2(\La)$ be as in Theorem \ref{thm:splitting}, and put
$P:=Q^{-1}$. 
 From~$\sigma(Q)=SQ$ and the multiplicativity of~$\sigma$, we obtain
 \[
 \sigma(P)=\sigma(Q)^{-1}=(SQ)^{-1}=Q^{-1}S^{-1}=PS^{-1};
\]
 As a consequence, we deduce that
\begin{equation*}
  S=\sigma(P)^{-1}P.
\end{equation*}
We claim that~$\overline{P}^TH=H\sigma(P)^{-1}$.
Note the factorisation~$H=xJ$ with $J:=\bsm 0&1\\-t^{-1}&0\esm.$
For every matrix~$P=\bsm p&q\\r&s\esm\in
\SL_2(\La)$, we have~$\sigma(P)=\bsm
 \ol p&t\ol q\\t^{-1}\ol r&\ol s
 \esm$ and~$\det(\sigma(P))=\ol{\det (P)}=1$, so that
\[
\sigma(P)^{-1}
= \begin{pmatrix}\ol s&-t\ol q\\-t^{-1}\ol r&\ol p\end{pmatrix}
 =\begin{pmatrix}0&-t\\1&0\end{pmatrix}
 \begin{pmatrix}\ol p&\ol r\\\ol q&\ol s\end{pmatrix}
 \begin{pmatrix}0&1\\-t^{-1}&0\end{pmatrix}
 =J^{-1}\ol P^{\,T}J.
\]
It follows that~$\ol P^{\,T}J=J\sigma(P)^{-1}$ and multiplying by~$x$ yields the claim.

The claim now promptly implies the theorem when $a(1)=1$:
\begin{equation*}
 \ol P^{\,T}HP
 =H\sigma(P)^{-1}P
 =H S=A.
\end{equation*}
Next, assume that~$a(1)=-1$.
Consider the diagonal matrix~$\Delta:=\operatorname{diag}(1,-1)$.
The Hermitian matrix~$A':=\ol \Delta^{T}A\Delta=\Delta A \Delta$ has~$b'=b,c'=c$ and~$a'=-a$.
In particular $a'(1)=1$, so the preceding argument gives a~$P'\in\SL_2(\La)$ with~$\overline{P'}^{\,T}A' P'=H$.
Setting~$P:=\Delta P'$, now gives~$\overline{P}^{T}AP=\overline{P'}^{\,T}\Delta A \Delta P'=\overline{P'}^{\,T}A' P'=H.$
This concludes the proof of the theorem.
\end{proof}

The following sections are devoted to the proof of Theorem~\ref{thm:splitting}.
The idea is to first prove local versions of the theorem (Section~\ref{sec:Local}) and to then deduce the result from the local ones (Section~\ref{sec:global}).

\begin{convention}
\label{conv:aone}
From now on we assume that the matrix~$A$ from~\eqref{eq:normal-form} satisfies
\begin{equation}\label{eq:aone}
 a(1)=1.
\end{equation}
\end{convention}

\section{The local version of Theorem~\ref{thm:splitting}.}
\label{sec:Local}

In the remainder of this article,  we set~$R:=\Z[u]$ and, given a maximal ideal~$\mathfrak{m} \subset R$, we write~$R_{\mathfrak{m}}$ for the localisation at~$\mathfrak{m}$, and set $\Lambda_{\mathfrak{m}}:=\Lambda \otimes_R R_{\mathfrak{m}}$.
Thus~$\Lambda_{\mathfrak m}$ is obtained from~$\Lambda$ by inverting the elements of~$R\setminus\mathfrak m$,  \emph{not} the elements of~$\Lambda \setminus \mathfrak{m}.$
The goal of this section is to prove the following local version of Theorem~\ref{thm:splitting}.

\begin{theorem}\label{thm:local}
For every maximal ideal~$\mathfrak m\subset R$, there is a matrix
$Q_{\mathfrak m}\in \GL_2(\La_{\mathfrak m})$ satisfying
$$\sigma(Q_{\mathfrak m})=SQ_{\mathfrak m}.$$
\end{theorem}

Recall the notation $x:=(t-1),u:=x\ol x=2-t-t^{-1}$ and set 
$$f:=(1+t)(1+t^{-1})=4-u.$$
\begin{remark}
\label{rem:Casework}
Theorem~\ref{thm:local} will be proved by considering four cases.
First case:~$u\in \mathfrak{m}$ and~$2 \notin \mathfrak{m}$,  
second case:~$f\in \mathfrak{m}$ and~$2 \notin \mathfrak{m}$,  
third case:~$\mathfrak{m}=(2,u)$,  
and fourth case~$uf \notin \mathfrak{m}$.
We argue that these four cases are exhaustive.
Since~$uf \notin \mathfrak{m}$ is case 4,  this reduces to showing that~$uf  \in \mathfrak{m}$ leads to the first three cases.
When~$uf  \in \mathfrak{m}$,  since~$\mathfrak{m}$ is prime,~$u \in \mathfrak{m}$ or~$f \in \mathfrak{m}$.
Assuming that~$2 \notin \mathfrak{m}$ leads to cases 1 and 2.
If~$2\in\mathfrak m$, the congruence~$f=4-u\equiv-u\pmod 2$ shows
that these two alternatives coincide and force
$2,u\in\mathfrak m$ hence~$\mathfrak m\supseteq(2,u)$; since
$R/(2,u)=\mathbb F_2$ is a field,~$(2,u)$ is itself maximal and
$\mathfrak m=(2,u)$; this is case~$3$.
\end{remark}

In each of these cases,  we 
produce a matrix~$Q \in \GL_2(\La_{\mathfrak{m}})$ with~$\sigma(Q)=SQ$.
Before beginning the casework, we record a fact about the ring~$\Lambda_{\mathfrak{m}}$ for later use.

\begin{lemma}
\label{lem:locality}
Let~$\mathfrak m\subset R$ be a maximal ideal.
If either (a) $f\in\mathfrak m$ and~$2\notin\mathfrak m$ or (b)~$\mathfrak m=(2,u)$, then the ring~$\La_{\mathfrak m}$ is local, with residue field isomorphic to~$\kappa:=R_{\mathfrak m}/\mathfrak mR_{\mathfrak m}$.
\begin{enumerate}
\item[\textup{(a)}] If~$f\in\mathfrak m$ and~$2\notin\mathfrak m$, then the quotient map~$\La_{\mathfrak m}\to\kappa$ sends $t$ to $-1$ and restricts to the quotient map $R_{\mathfrak m}\to\kappa$.
\item[\textup{(b)}] If~$\mathfrak m=(2,u)$, then the quotient map~$\La_{\mathfrak m}\to\kappa \cong \F_2$ sends $t$ to $1$ and restricts to the quotient map $R_{\mathfrak m}\to\kappa$; in particular it reduces integer coefficients modulo $2$.
\end{enumerate}
For every~$z\in\La_{\mathfrak m}$, the elements~$z$ and~$\ol z$ have the same image in~$\kappa$.
Finally, an element of~$\La_{\mathfrak m}$ is a unit
if and only if its image in the residue field is nonzero.
\end{lemma}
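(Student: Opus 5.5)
We have to prove Lemma~\ref{lem:locality}. The plan is to view $\La$ as an $R$-algebra and reduce modulo $\mathfrak m$. First, $\La$ is a finite free $R$-module with basis $1,t$: since $t+t^{-1}=2-u\in R$, we have $t^2=(2-u)t-1$, so $\La=R[t]/(t^2-(2-u)t+1)$. Hence $\La_{\mathfrak m}=R_{\mathfrak m}[t]/(t^2-(2-u)t+1)$ is finite over the local ring $R_{\mathfrak m}$. Its maximal ideals all contain $\mathfrak m\La_{\mathfrak m}$ (going up, or Nakayama for finite extensions), so they correspond to the maximal ideals of the fibre
\[
\La_{\mathfrak m}/\mathfrak m\La_{\mathfrak m}\cong \kappa[t]/(t^2-(2-\bar u)t+1).
\]

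Next I compute this fibre in the two cases. In case (a), $f=4-u\in\mathfrak m$ gives $\bar u=4$. The polynomial becomes $t^2+2t+1=(t+1)^2$, so the fibre is $\kappa[t]/(t+1)^2$. This ring is local with residue field $\kappa$, and its quotient map sends $t\mapsto -1$. In case (b), $\kappa=\F_2$ and $\bar u=0$, so the polynomial is $t^2-2t+1=(t-1)^2$. The fibre is $\F_2[t]/(t-1)^2$, which is local with $t\mapsto 1$.

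In both cases the fibre has a unique maximal ideal, so $\La_{\mathfrak m}$ has a unique maximal ideal, namely the kernel of the composite $\La_{\mathfrak m}\to\text{fibre}\to\kappa$. This composite restricts on $R_{\mathfrak m}$ to the reduction map, since $R_{\mathfrak m}\to\kappa[t]/(\cdots)$ is the structure map. In case (b), integers therefore reduce mod $2$.

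For the statement about $\bar z$: the involution sends $t\mapsto t^{-1}$. The image of $t^{-1}$ in $\kappa$ is $(\pm1)^{-1}=\pm1$, which equals the image of $t$. The involution also fixes $R_{\mathfrak m}$ pointwise, since $u=\bar u$. Writing $z=r_0+r_1t$ with $r_i\in R_{\mathfrak m}$ gives $\bar z=r_0+r_1t^{-1}$, so $z$ and $\bar z$ have the same image. We also need the involution to extend to $\La_{\mathfrak m}$, which holds because we only invert elements of $R$, and these are fixed.

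Finally, in a local ring an element is a unit if and only if it lies outside the maximal ideal, that is, if and only if its image in the residue field is nonzero. The main step needing care is justifying that maximal ideals of $\La_{\mathfrak m}$ lie over $\mathfrak m R_{\mathfrak m}$. I would cite integrality: $\La_{\mathfrak m}$ is integral over $R_{\mathfrak m}$, so any maximal ideal contracts to a maximal ideal of $R_{\mathfrak m}$, and that ideal is $\mathfrak mR_{\mathfrak m}$.
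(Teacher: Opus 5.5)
Your proposal is correct and follows the paper's proof essentially step for step. You present $\La_{\mathfrak m}$ as $R_{\mathfrak m}[t]/(t^2-(2-u)t+1)$ and identify the fibre $\La_{\mathfrak m}/\mathfrak m\La_{\mathfrak m}$ with $\kappa[t]/(t\pm1)^2$. You then use integrality over $R_{\mathfrak m}$ to see that every maximal ideal of $\La_{\mathfrak m}$ contains $\mathfrak m\La_{\mathfrak m}$, and read off the residue map and the comparison of $z$ with $\ol z$ from $z=r_0+r_1t$.

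One small point: the relation $t^2=(2-u)t-1$ by itself only shows that $1,t$ generate $\La$ over $R$, not that they form a basis (the paper takes freeness from Lemma~\ref{lem:eigenspaces}). This does not hurt you, since your argument only uses generation: a nonzero quotient of the local ring $\kappa[t]/(t\pm1)^2$ is still local with residue field $\kappa$.
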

\begin{proof}
We begin by describing the quotient~$\La_{\mathfrak m}/\mathfrak m\La_{\mathfrak m}$ and show that it is local.
We will then deduce that $\La_{\mathfrak m}$ is itself local and finally consider the properties of the quotient map.
Lemma \ref{lem:eigenspaces} shows that~$\La_{\mathfrak m}$ is a
free~$R_{\mathfrak m}$-module with basis~$1,t$.
More precisely,  the relation~$ t^2-(2-u)t+1=0$
implies that $\Lambda \cong R[t]/(t^2-(2-u)t+1)$ and thus~$\Lambda_{\mathfrak m} \cong R_{\mathfrak m}[t]/(t^2-(2-u)t+1)$.
Reducing modulo~$\mathfrak{m}R_{\mathfrak{m}}$ and using~$T\in \kappa[T]$ (resp.~$\bar u\in\kappa$) to denote the image of~$t \in \Lambda_{\mathfrak{m}}$ (resp. the image of~$u \in \Lambda_{\mathfrak{m}}$) under the quotient map,  we obtain
\[
 \La_{\mathfrak m}/\mathfrak m\La_{\mathfrak m}
 \;\cong\;\kappa[T]/\bigl(T^2-(2-\bar u)T+1\bigr).
\]
Note that $T$ is invertible in the quotient because the relation gives
$T\bigl((2-\bar u)-T\bigr)=1$.

In case (a),~$4-u=f\in\mathfrak m$ gives~$\bar u=4$, so the relation
becomes~$0=T^2+2T+1=(T+1)^2$. 
 In case (b),~$\kappa=\Z[u]/(2,u) \cong \mathbb F_2$ and~$\bar u=0$, so this time, the relation becomes~$0=T^2+1=(T-1)^2$.
Set~$s:=T+1$ in case (a) and~$s:=T-1$ in case (b) so
that, in both cases, the quotient is
\[
 \La_{\mathfrak m}/\mathfrak m\La_{\mathfrak m} \cong  \kappa[s]/(s^2)=\kappa\oplus\kappa s.
\]
An element~$c_0+c_1s$ with~$c_0\neq0$ is a unit (with inverse~$c_0^{-1}-c_0^{-2}c_1s$) and the elements with~$c_0=0$ are not units 
and form the ideal $(s)$.  
Thus the nonunits of the quotient coincide with the ideal~$(s)$.
It follows that~$ \La_{\mathfrak m}/\mathfrak m\La_{\mathfrak m} $ is a local ring with maximal ideal~$(s)$ and residue field isomorphic to~$\kappa$.
\begin{claim}
\label{claim:QuotientLocal}
The ring~$\La_{\mathfrak m}$ is local.
\end{claim}
\begin{proof}[Proof of Claim~\ref{claim:QuotientLocal}]
Use~$\pi \colon \La_{\mathfrak m} \to \La_{\mathfrak m}/\mathfrak{m}\La_{\mathfrak m}=:\ol \La_{\mathfrak m}$ to denote the quotient map.
The map~$\mathfrak{a} \mapsto \pi(\mathfrak{a})$ induces a bijection between the set of ideals of~$\La_{\mathfrak m}$ containing~$\mathfrak{m}\Lambda_{\mathfrak{m}}$ and the set of ideals of~$\ol \La_{\mathfrak m}$.
Since~$\Lambda_{\mathfrak{m}}/\mathfrak{a} \cong \ol \La_{\mathfrak m}/\pi(\mathfrak{a})$, an ideal~$\mathfrak{a} \subset \Lambda_{\mathfrak{m}}$ containing $\mathfrak{m}\Lambda_{\mathfrak{m}}$ is maximal if and only if~$\pi(\mathfrak{a})\subset \ol \Lambda_{\mathfrak{m}}$ is maximal.

The ring~$\ol \La_{\mathfrak m}=\La_{\mathfrak m}/\mathfrak{m}\La_{\mathfrak m}$ is local with unique maximal ideal $(s)$.
The claim therefore reduces to proving that every maximal ideal of~$\Lambda_{\mathfrak{m}}$ contains~$\mathfrak{m}\Lambda_{\mathfrak{m}}$.
Indeed, this would then imply that~$\Lambda_{\mathfrak{m}}$ has a single maximal ideal, namely~$\mathfrak{n}_0:=\pi^{-1}((s))$,  and that~$\Lambda_{\mathfrak{m}}/\mathfrak{n}_0 \cong \ol \Lambda_{\mathfrak{m}}/(s) \cong \kappa.$
In particular, this  would show that~$\Lambda_{\mathfrak{m}}$ is local.

We now show that every maximal ideal~$\mathfrak{n} \subset \Lambda_{\mathfrak{m}}$ contains~$\mathfrak{m}\Lambda_{\mathfrak{m}}$.
Consider the subring~$R_{\mathfrak{m}} \subset \Lambda_{\mathfrak{m}}$ and note that~$\Lambda_{\mathfrak{m}}$ is integral over~$R_{\mathfrak{m}}$:
indeed it is a finitely generated free module over the
local ring~$R_{\mathfrak m}$ with basis~$1,t$; see e.g.~\cite[Proposition 5.1]{AtiyahMacdonald}.
By~\cite[Corollary 5.8]{AtiyahMacdonald},  since~$\Lambda_{\mathfrak{m}}$ is integral over~$R_{\mathfrak{m}}$ and since~$\mathfrak n \subset \Lambda_{\mathfrak{m}}$ is maximal, so is~$\mathfrak{n} \cap R_{\mathfrak{m}} \subset R_{\mathfrak m}$.
Since~$R_{\mathfrak m}$ is local with maximal ideal~$\mathfrak{m}R_{\mathfrak m}$,  
 it follows that~$\mathfrak{n} \cap R_{\mathfrak{m}}=\mathfrak{m}R_{\mathfrak m}$.
In particular,~$\mathfrak{n} \supset \mathfrak{m}R_{\mathfrak{m}}$ and since~$\mathfrak{n}$ is an ideal of~$\Lambda_{\mathfrak{m}}$,  it contains the ideal generated by that set, i.e.~$\mathfrak{n} \supset (\mathfrak{m}R_{\mathfrak{m}})\Lambda_{\mathfrak{m}}=\mathfrak{m}\Lambda_{\mathfrak{m}}.$

As explained above, this proves that~$\Lambda_{\mathfrak{m}}$ is local and thus concludes the proof of Claim~\ref{claim:QuotientLocal}.
\end{proof}
Next, we verify the properties of the quotient map
$$\La_{\mathfrak m}\to \La_{\mathfrak m}/ \mathfrak m \La_{\mathfrak m} \cong \kappa \oplus \kappa s \to  \kappa. $$
The composition~$\La_{\mathfrak m}\to
\La_{\mathfrak m}/\mathfrak m\La_{\mathfrak m}\to\kappa$ kills~$s=T \mp 1$,
because it sends~$t$ to~$-1$ in case (a) and to~$1$ in case (b).
By construction,  it restricts to the canonical map on~$R_{\mathfrak m}$. 
 It is the unique
ring homomorphism with these properties because~$1,t$ generate
$\La_{\mathfrak m}$ over~$R_{\mathfrak m}$. 
 For~$z=\alpha+\beta t \in \Lambda_{\mathfrak{m}}$ with~$\alpha,\beta\in R_{\mathfrak m}$, one has
$\ol z=\alpha+\beta t^{-1}$,  so~$z$ and~$\ol z$ have the same residue.
Finally, in any local ring the nonunits are exactly the elements of the
maximal ideal, which is the kernel of the residue map; this proves
the last assertion of the lemma.
\end{proof}

The next four sections are devoted to the casework needed to prove Theorem~\ref{thm:local}.

\subsection{First case:~$u\in\mathfrak m$ and~$2\notin\mathfrak m$.}
\label{sub:1}

Recall that $R:=\Z[u]$.
For the first two cases of Theorem~\ref{thm:local}, the matrix~$Q_{\mathfrak{m}}$ will be constructed using the following lemma.
\begin{lemma}
\label{lem:IplusS}
Let $R'$ be a localisation of $R$ and set~$\La':=\La\otimes_RR'$.
If a matrix~$\Sigma\in M_2(\La')$ satisfies~$\sigma(\Sigma)=\Sigma^{-1}$ and~$I+\Sigma$ is invertible over~$\La'$, then~$Q:=(I+\Sigma)^{-1}$
satisfies 
$$\sigma(Q)=\Sigma Q.$$
\end{lemma}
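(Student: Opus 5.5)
The plan is a direct computation using only that $\sigma$ is a ring automorphism of $M_2(\La')$. First we check that $\sigma$ makes sense on $M_2(\La')$. The involution $t\mapsto t^{-1}$ fixes $u$, so it fixes $R$ pointwise. It therefore extends to $\La'=\La\otimes_R R'$ by acting on the first tensor factor, and this extension fixes $R'$. Since $D$ is invertible over $\La\subset\La'$, the map $X\mapsto D\ol X D^{-1}$ is again additive, multiplicative and unital. In particular $\sigma(I)=I$ and $\sigma(X^{-1})=\sigma(X)^{-1}$ for every $X\in\GL_2(\La')$.

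Next we apply $\sigma$ to $Q=(I+\Sigma)^{-1}$. Using the hypothesis $\sigma(\Sigma)=\Sigma^{-1}$, we get
\[
\sigma(Q)=\sigma(I+\Sigma)^{-1}=(I+\Sigma^{-1})^{-1}.
\]
Here $I+\Sigma^{-1}=\sigma(I+\Sigma)$ is invertible because $I+\Sigma$ is. Note that $\Sigma$ itself is invertible: $\sigma(\Sigma)=\Sigma^{-1}$ already presupposes this, and in any case $\Sigma\,\sigma(\Sigma)=I$.

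We then factor $I+\Sigma^{-1}=\Sigma^{-1}(\Sigma+I)$, which gives
\[
(I+\Sigma^{-1})^{-1}=(I+\Sigma)^{-1}\Sigma.
\]
Since $\Sigma$ commutes with $I+\Sigma$, it also commutes with $(I+\Sigma)^{-1}$. Hence $(I+\Sigma)^{-1}\Sigma=\Sigma(I+\Sigma)^{-1}=\Sigma Q$, which is the required identity $\sigma(Q)=\Sigma Q$.

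There is no real obstacle. The only point needing care is the first step: confirming that the involution, and hence $\sigma$, is well defined on the localisation $\La'$. This holds because we localise at elements of $R=\Z[u]$, which the involution fixes.
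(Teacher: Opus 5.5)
Your proof is correct and follows essentially the same computation as the paper: apply $\sigma$, use $\sigma(\Sigma)=\Sigma^{-1}$, and invert a factorisation of $I+\Sigma^{-1}$. The paper factors it as $(I+\Sigma)\Sigma^{-1}$, which yields $\Sigma(I+\Sigma)^{-1}$ directly without your commutation step, but this difference is only cosmetic.
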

\begin{proof}
Since~$\sigma$ is a ring homomorphism,
$\sigma(I+\Sigma)=I+\sigma(\Sigma)=I+\Sigma^{-1}$, and therefore
\[
 \sigma(I+\Sigma)\,\Sigma=(I+\Sigma^{-1})\Sigma=\Sigma+I.
\]
Thus~$\sigma(I+\Sigma)=(I+\Sigma)\Sigma^{-1}$.  Inverting both sides and
using~$\sigma(X^{-1})=\sigma(X)^{-1}$ yields
\[
\sigma(Q)
=\sigma\bigl((I+\Sigma)^{-1}\bigr)
 =\sigma(I+\Sigma)^{-1}
 =\Sigma\,(I+\Sigma)^{-1}
 =\Sigma Q.
\]
This concludes the proof of the lemma.
\end{proof}

The following proposition proves the first case of Theorem~\ref{thm:local}.

\begin{proposition}\label{prop:case2}
If~$\mathfrak m\subset R$ is a maximal ideal with~$u\in\mathfrak m$ and~$2\notin\mathfrak m$, then there is an invertible matrix~$Q_{\mathfrak m}\in GL_2(\La_{\mathfrak m})$ satisfying
$$\sigma(Q_{\mathfrak m})=SQ_{\mathfrak m}.$$
\end{proposition}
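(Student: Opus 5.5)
We apply Lemma~\ref{lem:IplusS} with $R':=R_{\mathfrak m}$ and $\Sigma:=S$, and take $Q_{\mathfrak m}:=(I+S)^{-1}$. The hypothesis $\sigma(S)=S^{-1}$ is exactly~\eqref{eq:cocycle}; it holds over $\La$ and hence over $\La_{\mathfrak m}$, since $\sigma$ extends to $\La_{\mathfrak m}$: the involution fixes $R$ pointwise, as $\ol u=u$. So the whole proof reduces to showing that $I+S$ is invertible over $\La_{\mathfrak m}$. Equivalently, we must show that $\det(I+S)$ is a unit of $\La_{\mathfrak m}$.

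First compute $\det(I+S)$. Since $\det S=1$, we have $\det(I+S)=1+\operatorname{tr}(S)+\det(S)=2+a+\ol a$. The norm $N(z):=z\ol z$ maps $\La_{\mathfrak m}$ into $R_{\mathfrak m}$, because $R$ is the fixed subring of the involution (Appendix~\ref{sec:fixed}). It therefore suffices to show that $N(2+a+\ol a)=(2+a+\ol a)^2$ is a unit in $R_{\mathfrak m}$, i.e.\ that $2+a+\ol a\notin\mathfrak m$, where $2+a+\ol a$ is a symmetric element and hence lies in $R$. If $z\ol z$ is a unit, then $z$ is a unit with inverse $\ol z(z\ol z)^{-1}$.

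To see that $2+a+\ol a\notin\mathfrak m$, reduce modulo $u$. Since $u=2-t-t^{-1}\in\mathfrak m$, it suffices to evaluate the symmetric element $2+a+\ol a\in R=\Z[u]$ at $u=0$, i.e.\ at $t=1$. Writing $2+a+\ol a=r(u)$, we get $r(0)=2+2a(1)=4$ by Convention~\ref{conv:aone}. Hence $2+a+\ol a\equiv 4 \pmod{uR}$, so modulo $\mathfrak m$ it equals $4$. Because $2\notin\mathfrak m$ and $\mathfrak m$ is prime, $4\notin\mathfrak m$, and therefore $2+a+\ol a$ is a unit in $R_{\mathfrak m}$, hence also in $\La_{\mathfrak m}$.

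The only delicate point is justifying that a symmetric Laurent polynomial lies in $\Z[u]$ and that its image in $R/uR=\Z$ is its value at $t=1$. Both follow since $t^n+t^{-n}$ is an integer polynomial in $t+t^{-1}=2-u$, and $u$ vanishes at $t=1$. Everything else is formal, so we expect no real obstacle.
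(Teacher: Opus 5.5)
Your proof is correct and follows the paper's argument. Both apply Lemma~\ref{lem:IplusS} with $\Sigma=S$, compute $\det(I+S)=2+a+\ol a\in R$, and evaluate at $t=1$ to get $\det(I+S)\equiv 4 \pmod{\mathfrak m}$, which is a unit of $R_{\mathfrak m}$ because $2\notin\mathfrak m$. The detour through the norm $z\ol z$ is harmless but unnecessary: $2+a+\ol a$ already lies in $R$, so it is a unit of $R_{\mathfrak m}\subset\La_{\mathfrak m}$ as soon as it avoids $\mathfrak m$.
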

\begin{proof}
We show that~$I+S$ is invertible over~$\La_{\mathfrak m}$ and then apply
Lemma \ref{lem:IplusS} with~$R'=R_{\mathfrak{m}}$ and~$\Sigma=S$; indeed recall from~\eqref{eq:cocycle} that $\sigma(S)=S^{-1}$.
Using the definition of~$S$, we see that
\[
 \det(I+S)
= \det \begin{pmatrix} 1+\ol a&bx\\c\overline{x}&1+a\end{pmatrix}
=1+a+\overline{a}+\overbrace{\det(S)}^{=1}
 =2+a+\ol a.
\]
The element~$a+\ol a$ is fixed by the involution and therefore lies in
$R=\mathbb Z[u]$ by Lemma \ref{lem:eigenspaces}.
Write~$a+\ol a=g(u)$ for some~$g\in\mathbb Z[u]$.  
Evaluation at~$t=1$ is a ring
homomorphism~$\La\to\mathbb Z$ which sends~$u=2-t-t^{-1}$ to~$0$ and
sends both~$a$ and~$\ol a$ to~$a(1)=1$; recall Convention~\ref{conv:aone}.
It follows that~$g(0)=(a+\ol a)(1)=2a(1)=2$  and therefore
$g(u)-2\in u\,\mathbb Z[u]\subseteq\mathfrak m$ (because~$u \in \mathfrak{m}$), so
\[
 \det(I+S)=2+g(u)\equiv 4\mod{\mathfrak m}.
\]
Since~$2\notin\mathfrak m$ and~$\mathfrak m$ is prime,  it follows that~$4\notin\mathfrak m$. 
As a consequence,~$\det(I+S)$ is a unit
 of~$R_{\mathfrak m}$ and hence of~$\La_{\mathfrak m}$.  
Thus~$I+S$ is
invertible in~$\Lambda_{\mathfrak{m}}$.
Lemma \ref{lem:IplusS} applied to~$\Sigma=S$ gives the result.
\end{proof}

\subsection{Second case:~$f\in\mathfrak m$ and~$2\notin\mathfrak m$.}
\label{sub:2}
We establish the second case of Theorem~\ref{thm:local}.

\begin{proposition}\label{prop:case3}
For every maximal ideal~$\mathfrak m\subset R$ with~$f\in\mathfrak m$ and~$2\notin\mathfrak m$, there is an invertible matrix~$Q_{\mathfrak m}\in GL_2(\La_{\mathfrak m})$ satisfying
$$\sigma(Q_{\mathfrak m})=SQ_{\mathfrak m}.$$
\end{proposition}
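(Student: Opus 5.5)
The plan is to follow the approach of the first case, Proposition~\ref{prop:case2}, and apply Lemma~\ref{lem:IplusS} to a suitable matrix $\Sigma$ with $\sigma(\Sigma)=\Sigma^{-1}$. We split according to the residue of $a$. By Lemma~\ref{lem:locality}(a), $\La_{\mathfrak m}$ is local with residue field $\kappa=R_{\mathfrak m}/\mathfrak m R_{\mathfrak m}$. The residue map sends $t\mapsto -1$, and $z$ and $\ol z$ always have the same residue. An element of $\La_{\mathfrak m}$ is a unit exactly when its residue is nonzero. Since $2\notin\mathfrak m$, the element $2$ is invertible in $\kappa$. Write $\alpha\in\kappa$ for the residue of $a$; it is also the residue of $\ol a$.

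\emph{Subcase $\alpha\neq -1$.} As in Proposition~\ref{prop:case2}, $\det(I+S)=2+a+\ol a$. Its residue is $2+2\alpha=2(1+\alpha)$, which is nonzero because $2$ is invertible in $\kappa$ and $1+\alpha\neq 0$. Hence $\det(I+S)$ is a unit of $\La_{\mathfrak m}$ and $I+S$ is invertible. Lemma~\ref{lem:IplusS}, applied with $R'=R_{\mathfrak m}$ and $\Sigma=S$ (recall $\sigma(S)=S^{-1}$ from~\eqref{eq:cocycle}), then gives $Q_{\mathfrak m}:=(I+S)^{-1}$.

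\emph{Subcase $\alpha=-1$.} Here we twist $S$ by $W:=\bsm 0&-1\\1&0\esm\in\SL_2(\La)$ and set $\Sigma:=\sigma(W)^{-1}SW$. Using that $\sigma$ is a ring automorphism together with~\eqref{eq:cocycle}, we get
\[
\sigma(\Sigma)=W^{-1}\sigma(S)\sigma(W)=W^{-1}S^{-1}\sigma(W)=\Sigma^{-1}.
\]
A direct computation gives $\sigma(W)=\bsm 0&-t\\t^{-1}&0\esm$, so $\sigma(W)^{-1}=\bsm 0&t\\-t^{-1}&0\esm$. This yields
\[
\Sigma=\begin{pmatrix} ta & -tc\ol x\\ -t^{-1}bx & t^{-1}\ol a\end{pmatrix},
\]
which has determinant $1$. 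Consequently $\det(I+\Sigma)=2+ta+t^{-1}\ol a$. Its residue is $2+(-1)(-1)+(-1)(-1)=4$, a unit of $\kappa$ since $2\notin\mathfrak m$. So $I+\Sigma$ is invertible, and Lemma~\ref{lem:IplusS} gives $Q':=(I+\Sigma)^{-1}$ with $\sigma(Q')=\Sigma Q'$. Set $Q_{\mathfrak m}:=WQ'$. Then
\[
\sigma(Q_{\mathfrak m})=\sigma(W)\sigma(Q')=\sigma(W)\Sigma Q'=SWQ'=SQ_{\mathfrak m},
\]
and $Q_{\mathfrak m}$ is invertible.

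The main obstacle is the subcase $\alpha=-1$, where $I+S$ fails to be invertible. The key point is that conjugating by $W$ turns $a+\ol a$ into $ta+t^{-1}\ol a$. Because $t$ has residue $-1$, this flips the sign of the residue of the trace term. The remaining steps are routine checks: the computation of $\sigma(W)$ and $\Sigma$, and the determinant residue.
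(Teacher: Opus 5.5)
Your proposal is correct and follows the paper's proof: you split on whether the residue of $a$ is $-1$, use $(I+S)^{-1}$ via Lemma~\ref{lem:IplusS} when it is not, and otherwise conjugate by $W$ so that the relevant determinant $2+ta+t^{-1}\ol a$ has nonzero residue $4$ before transferring back via $Q_{\mathfrak m}=WQ'$. The only cosmetic difference is that you compute $\Sigma=\sigma(W)^{-1}SW$ directly, whereas the paper packages these verifications into Lemma~\ref{lem:transfer} with $a'=t^{-1}\ol a$.
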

\begin{proof}
We first set up some notation.
Since~$f\in\mathfrak m$ and~$2\notin\mathfrak m$,  Lemma \ref{lem:locality}(a) implies that~$\La_{\mathfrak m}$ is a local ring.
This lemma also states that the residue field of $\Lambda_\mathfrak{m}$ is~$\kappa=R_{\mathfrak m}/\mathfrak mR_{\mathfrak m}$,
that the residue map sends~$t\mapsto-1$, and that for every~$z \in \Lambda_\mathfrak{m}$, the elements~$z$ and $\ol z$ have equal
residues.  
Writing~$a_-\in\kappa$ for the residue of~$a$, the conjugate~$\ol a$ also has residue~$a_-$, and thus the residue of~$\det(I+S)=2+a+\ol a$ is
$2(1+a_-)$.

We first prove the proposition when~$a_-\neq-1$.
In this case,~$2(1+a_-)\neq0$ in~$\kappa$ (the characteristic of
$\kappa$ is not~$2$, since~$2\notin\mathfrak m$),
so by the last sentence
of Lemma \ref{lem:locality}, the element~$\det(I+S)$ is a unit of~$\La_{\mathfrak m}$, and Lemma \ref{lem:IplusS} with~$\Sigma=S$ shows that~$Q_{\mathfrak{m}}:=(I+S)^{-1}$ satisfies $\sigma(Q_{\mathfrak{m}})=SQ_{\mathfrak{m}}$,  exactly as in the first case.

We now assume that~$a_-=-1$.  
Set~$W:=\bsm 0&-1\\1&0\esm$.
The argument consists of checking that the same proof as above works with~$S^W:=\sigma(W)^{-1}SW$ substituted for~$S$.
By Lemma \ref{lem:transfer} below, the parameter of~$S^W$
playing the role of~$a$ is~$a':=t^{-1}\ol a$.  
The residue of~$t^{-1}$
is~$(-1)^{-1}=-1$ 
and therefore
the residue of~$a'$ is
\[
 a'_-
 =(-1)\cdot a_-
 =-a_-
 =1.
\]
We can now repeat the computation from the proof of Proposition~\ref{prop:case2} with~$S^W$ instead of $S$.
First, since~$\det(S^W)=1$ (see Lemma~\ref{lem:transfer} for the details),  
we calculate
$$\det(I+S^W)=1+a'+\ol{a'}+\det(S^W)=2+a'+\ol{a'}.$$
Taking the residue yields~$2(1+a_-')\neq0$ in~$\kappa$,
so by the last sentence
of Lemma \ref{lem:locality} the element~$\det(I+S^W)$ is a unit of~$\La_{\mathfrak m}$.
It follows that $I+S^W$ is invertible over $\Lambda_{\mathfrak{m}}$.
Since we also have~$\sigma(S^W)=(S^W)^{-1}$ (the calculation is performed in Lemma~\ref{lem:transfer} below), Lemma~\ref{lem:IplusS} applies with
$\Sigma=S^W$ and states that~$Q'=(I+S^W)^{-1}$ satisfies~$\sigma(Q')=S^WQ'$.
Finally, Lemma \ref{lem:transfer} below shows that~$Q_{\mathfrak{m}}:=WQ'$ satisfies~$\sigma(Q_{\mathfrak{m}})=SQ_{\mathfrak{m}}$.
\end{proof}

We establish the properties of the matrix~$S^W$ that were used during the proof of the previous proposition.
We collect these in the form of a lemma since these calculations will be used again in the third case of Theorem~\ref{thm:local}.

\begin{lemma}
\label{lem:transfer}
Let $R'$ be a localisation of $R$,  set~$\La':=\La\otimes_RR'$,  and
\[
 W:=\begin{pmatrix}0&-1\\1&0\end{pmatrix},
 \qquad S^W:=\sigma(W)^{-1}SW\in M_2(\La').
\]
The matrix~$S^W$ has the same shape and properties as~$S$, namely
$$
 S^W
 =\begin{pmatrix}ta&cx\\b\overline{x}&t^{-1}\ol a\end{pmatrix}
 =:\begin{pmatrix}
 \ol{a'}&b'x\\c'\overline{x}&a'
 \end{pmatrix},
$$
where these~$a',b',c'\in \Lambda$ satisfy~$ b'=\ol{b'}, c'=\ol{c'}$ and $a'\ol{a'}-b'c'x\overline{x}=1, a'(1)=1$, as well as
$$\sigma(S^W)=(S^W)^{-1} \quad \text{and} \quad \det(S^W)=1.$$
Finally, if~$Q'\in\GL_2(\La')$ satisfies~$\sigma(Q')=S^WQ'$, then~$Q:=WQ'$ satisfies~$\det (Q)=\det(Q')$ and
$$\sigma(Q)=SQ.$$
\end{lemma}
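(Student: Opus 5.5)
This lemma reduces to explicit $2\times 2$ matrix computations. The only inputs are the multiplicativity of $\sigma$, the identity $\sigma(S)=S^{-1}$ from \eqref{eq:cocycle}, and the relations $t\ol x=-x$ and $t^{-1}x=-\ol x$. I would carry out the steps in the following order.

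\textbf{Step 1: compute $S^W$.} Since $W$ has integer entries, $\ol W=W$, so
\[
\sigma(W)=DWD^{-1}=\begin{pmatrix}0&-t\\ t^{-1}&0\end{pmatrix},
\qquad
\sigma(W)^{-1}=\begin{pmatrix}0&t\\ -t^{-1}&0\end{pmatrix}.
\]
Next, $SW=\bsm bx&-\ol a\\ a&-c\ol x\esm$. Multiplying on the left by $\sigma(W)^{-1}$ gives
\[
S^W=\begin{pmatrix} ta & -tc\ol x\\ -t^{-1}bx & t^{-1}\ol a\end{pmatrix}.
\]
Applying $-t\ol x=x$ and $-t^{-1}x=\ol x$ turns this into the displayed form $\bsm ta&cx\\ b\ol x&t^{-1}\ol a\esm$.

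\textbf{Step 2: read off the parameters.} Comparing with the shape of $S$, set $a':=t^{-1}\ol a$, so that $\ol{a'}=ta$, and set $b':=c$, $c':=b$. Then $b'$ and $c'$ are self-conjugate because $b$ and $c$ are. We have $a'\ol{a'}=a\ol a$, so
\[
a'\ol{a'}-b'c'x\ol x=a\ol a-bcx\ol x=1,
\]
and $a'(1)=a(1)=1$ by Convention~\ref{conv:aone}. For the determinant, note that $\det W=1$ and $\det\sigma(W)=1$, so $\det(S^W)=\det(S)=1$. Alternatively, the determinant equals $a'\ol{a'}-b'c'x\ol x$ computed directly.

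\textbf{Step 3: the cocycle identity.} There are two routes. The structural one uses multiplicativity of $\sigma$ and the fact that $\sigma$ is an involution:
\[
\sigma(S^W)=W^{-1}\,\sigma(S)\,\sigma(W)=W^{-1}S^{-1}\sigma(W)=(S^W)^{-1}.
\]
The second route simply repeats the computation of Notation~\ref{not:Involutionsigma}, since $S^W$ has exactly the shape of $S$ with parameters satisfying the same hypotheses. Either works; I would present the first.

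\textbf{Step 4: transfer of solutions.} Let $Q:=WQ'$. Then $\det Q=\det W\det Q'=\det Q'$. Using $\sigma(Q')=S^WQ'$,
\[
\sigma(Q)=\sigma(W)\sigma(Q')=\sigma(W)\,\sigma(W)^{-1}SWQ'=SWQ'=SQ.
\]

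\textbf{Main obstacle.} The only point requiring care is the sign bookkeeping in Step 1, specifically checking that the off-diagonal entries land on $cx$ and $b\ol x$ rather than their negatives. Everything else follows from formal properties of $\sigma$. All of these identities hold verbatim over any localisation $\La'$, because $\sigma$ extends there: it fixes $R$ pointwise.
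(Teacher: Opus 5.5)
Your proposal is correct and follows the paper's proof essentially step for step. Both compute $\sigma(W)$ and $SW$ explicitly, use $-t\ol x=x$ and $-t^{-1}x=\ol x$ to reach the stated form of $S^W$, read off $a'=t^{-1}\ol a$, $b'=c$, $c'=b$, obtain $\sigma(S^W)=(S^W)^{-1}$ from multiplicativity of $\sigma$ and $\sigma^2=\mathrm{id}$, and finish with $\sigma(WQ')=\sigma(W)S^WQ'=SWQ'$.
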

\begin{proof}
We first verify the formulas for $S^W$ and its coefficients. 
Since~$W$ has integer entries,~$\ol W=W$ and~$\sigma(W)=DWD^{-1}=\bsm 0&-t\\t^{-1}&0\esm$.
This has~$\det(\sigma (W))=1$ and~$\sigma(W)^{-1}=\bsm0&t\\-t^{-1}&0\esm$.
Next, a direct calculation shows that~$SW=\bsm \ol a&bx\\c\overline{x}&a\esm
 \bsm0&-1\\1&0\esm =\bsm bx&-\ol a\\a&-c\overline{x}\esm$.
Using~$-t\overline{x}=x$ and~$-t^{-1}x=\overline{x}$, we deduce that
\[
 S^W=\sigma(W)^{-1}(SW)
 =\begin{pmatrix}0&t\\-t^{-1}&0\end{pmatrix}
 \begin{pmatrix}bx&-\ol a\\a&-c\overline{x}\end{pmatrix}
 =\begin{pmatrix}ta&-tc\overline{x}\\-t^{-1}bx&t^{-1}\ol a\end{pmatrix}
 =\begin{pmatrix}ta&cx\\b\overline{x}&t^{-1}\ol a\end{pmatrix}.
\]
The parameters~$b',c'$ are fixed by the involution because~$b$
and~$c$ are.  
Proposition~\ref{prop:A1=0Automatic} gives
\[
 a'\ol{a'}-b'c'x\overline{x}=(t^{-1}\ol a)(ta)-cb\,x\overline{x}=a\ol a-bcx\overline{x}=1.
\]
Similarly,  evaluation at~$t=1$ 
 gives~$a'(1)=a(1)=1$; recall Convention~\ref{conv:aone}.

We now verify the further properties of $S^W$.
Since~$\sigma$ is a ring automorphism with~$\sigma^2=\mathrm{id}$,
we have~$\sigma(\sigma(W)^{-1})=W^{-1}$.
This equality together with~$\sigma(S)=S^{-1}$ implies that
\[
 \sigma(S^W)=W^{-1}\,\sigma(S)\,\sigma(W)
 =W^{-1}S^{-1}\sigma(W)
 =\bigl(\sigma(W)^{-1}SW\bigr)^{-1}=(S^W)^{-1}.
\]
A direct calculation also shows that~$\det(S^W)=\det(\sigma(W)^{-1})\det(S)\det(W)=1$.

It only remains to verify the final sentence of the lemma.
To do so,  assume that~$Q'\in\GL_2(\La')$ satisfies~$\sigma(Q')=S^WQ'$, and set~$Q:=WQ'$.
The multiplicativity of~$\sigma$ gives
\[
 \sigma(Q)=\sigma(WQ')=\sigma(W)\sigma(Q')=\sigma(W)S^WQ'
 =\sigma(W)\,\sigma(W)^{-1}SW\,Q'=S\,(WQ')=SQ.
\]
Finally,~$\det(Q)=\det(WQ')=\det (W)\det (Q')=\det (Q')$ because~$\det (W)=1$.
\end{proof}

\subsection{Third case:~$\mathfrak m=(2,u)$.}
\label{sub:3}
For~$\mathfrak m=(2,u)$, Lemma \ref{lem:locality}(b) asserts that~$\La_{\mathfrak m}$ is a local ring with
 residue field~$\kappa \cong \mathbb F_2$,  
that the residue map sends~$t\mapsto1$ and
reduces integer coefficients modulo~$2$, 
and that~$z$,~$\ol z$ always have
equal residues.  
Since the argument from Lemma~\ref{lem:IplusS} is not
available (the residue of~$\det(I+S)=2+a+\ol a$ is
$0+1+1=0$ in~$\mathbb F_2$), we instead build the matrix~$Q_{\mathfrak{m}}$
column by column.
The key lemma we will use to carry this out (namely Lemma~\ref{lem:completion}) requires we introduce some notation.

\begin{notation}
\label{not:taui}
Throughout this section,~$R'$ will denote a localisation of~$R$ (the case~$R'=R$
is allowed) and~$\La':=\La\otimes_RR'$.
Given a matrix~$\Sigma\in\SL_2(\La')$ satisfying~$
 \sigma(\Sigma)=\Sigma^{-1}$,  we define antilinear involutions on~$(\La')^2$ by
\[
 \tau_1^{\Sigma}(v):=t^{-1}\Sigma^{-1}D\ol v,
  \qquad
   \tau_2^{\Sigma}(v):=\Sigma^{-1}D\ol v.
\]
We spell out why $\tau_2^\Sigma$  is an involution; the argument for $\tau_1^\Sigma$ is analogous. 
Inverting both sides of~$\sigma(\Sigma)=D\ol\Sigma D^{-1}=\Sigma^{-1}$ gives~$D\ol{\Sigma^{-1}}\ol D=\Sigma$, and thus
\[
 (\tau_2^{\Sigma})^2(v)
 =\Sigma^{-1}D\,\ol{\Sigma^{-1}D\ol v}
 =\Sigma^{-1}\bigl(D\ol{\Sigma^{-1}}\ol D\bigr)v=\Sigma^{-1}\Sigma v=v.
\]
The motivation for defining these maps is that for a size $2$ matrix~$Q=(q_1\ q_2)$, reading off columns shows that~$\sigma(Q)=\Sigma Q$ (i.e.~$D\ol Q=\Sigma QD$) if and only if~$\tau_1^\Sigma(q_1)=q_1$ and~$\tau_2^\Sigma(q_2)=q_2$. 
This can be seen by multiplying the definitions of $\tau_1^\Sigma(v)$ and $\tau_2^\Sigma(v)$ by~$t\Sigma$ and~$\Sigma$ respectively to obtain 
\begin{equation}\label{eq:tauMotivation}
\tau_1^\Sigma(v)=v \Longleftrightarrow D\ol v=t\Sigma v,
 \qquad
\tau_2^\Sigma(w)=w\Longleftrightarrow D\ol w=\Sigma w.
\end{equation}
As mentioned in the introduction, this will be relevant both during the proof of Theorem~\ref{thm:local} when~$\mathfrak m=(2,u)$ and during the proof of Theorem~\ref{thm:splitting}, when working over $R$.
\end{notation}

In order to prove Theorem~\ref{thm:local} when~$\mathfrak m=(2,u)$, we require the following lemma.
\begin{lemma}
\label{lem:completion}
Suppose~$\Sigma\in\SL_2(\La')$ satisfies~$ \sigma(\Sigma)=\Sigma^{-1}$. 
 If a unimodular vector~$v\in(\La')^2$ satisfies~$\tau_1^{\Sigma}(v)=v$, then there is a vector
$w'\in(\La')^2$ for which~$Q:=(v\ \ w')$ satisfies
\[
 Q \in\SL_2(\La')
 \quad\text{and}\quad
 \sigma(Q)=\Sigma Q.
\]
\end{lemma}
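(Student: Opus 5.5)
The plan is to start from an arbitrary completion of $v$ to a matrix of determinant one, and then correct the second column by a multiple of $v$ until it is fixed by $\tau_2^{\Sigma}$. Since $v$ is unimodular, there is a $\Lambda'$-linear map sending $v$ to $1$. As in the proof of Theorem~\ref{thm:g=2}, this gives $(\Lambda')^2=\Lambda' v\oplus \ker$. If $\ker$ is free (which holds over $\Lambda$, and I would check it over the localisation $\Lambda'$), we can choose $w_0$ with $\det(v\ w_0)=1$. The general vector $w$ with $\det(v\ w)=1$ is then $w_0+\lambda v$ with $\lambda\in\Lambda'$. Indeed, $\det(v\ y)=0$ with $v$ part of a basis forces $y\in\Lambda' v$. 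By \eqref{eq:tauMotivation}, it suffices to find $\lambda$ with $\tau_2^{\Sigma}(w_0+\lambda v)=w_0+\lambda v$; then $Q:=(v\ \ w_0+\lambda v)$ works.

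The first key step is to record two identities. First, $\tau_2^{\Sigma}(v)=\Sigma^{-1}D\ol v=t\,\tau_1^{\Sigma}(v)=tv$. Second, for every $w$,
\[
 \det\bigl(v\ \ \tau_2^{\Sigma}(w)\bigr)=\det\bigl(\Sigma^{-1}D\,t^{-1}\ol v\ \ \Sigma^{-1}D\ol w\bigr)=\det(\Sigma^{-1})\det(D)\,t^{-1}\,\ol{\det(v\ w)}=\ol{\det(v\ w)}.
\]
This uses $\det\Sigma=1$ and $\det D=t$. In particular $\det(v\ \tau_2^{\Sigma}(w_0))=1$, so by the previous paragraph $\tau_2^{\Sigma}(w_0)=w_0+\mu v$ for a unique $\mu\in\Lambda'$. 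Applying the involution $\tau_2^{\Sigma}$ once more and using antilinearity together with $\tau_2^{\Sigma}(v)=tv$ gives $w_0=w_0+\mu v+t\ol\mu v$. Hence $\mu=-t\ol\mu$.

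The fixed-point condition for $w_0+\lambda v$ reads $w_0+\mu v+t\ol\lambda v=w_0+\lambda v$, that is,
\[
 \mu=\lambda-t\ol\lambda.
\]
So the lemma reduces to a Hilbert-90-type statement: every $\mu$ with $\mu+t\ol\mu=0$ lies in the image of $\lambda\mapsto\lambda-t\ol\lambda$. I expect this to be the main point, and I would settle it using the basis $1,t$ of $\Lambda'$ over $R'$ from Lemma~\ref{lem:eigenspaces}. Write $\mu=\alpha+\beta t$ with $\alpha,\beta\in R'$. Then $t\ol\mu=\alpha t+\beta$, so the condition $\mu+t\ol\mu=0$ gives $(\alpha+\beta)(1+t)=0$, hence $\beta=-\alpha$ and $\mu=\alpha(1-t)$. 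Taking $\lambda:=\alpha\in R'$, which is fixed by the involution, gives $\lambda-t\ol\lambda=\alpha(1-t)=\mu$, as required.

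Finally, set $w':=w_0+\alpha v$. Then $\det(v\ w')=1$ and $\tau_2^{\Sigma}(w')=w'$. Together with $\tau_1^{\Sigma}(v)=v$ and \eqref{eq:tauMotivation}, this gives $\sigma(Q)=\Sigma Q$ for $Q=(v\ w')\in\SL_2(\Lambda')$.
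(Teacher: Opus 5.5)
Your proposal is correct and follows essentially the same route as the paper. You complete $v$ to a determinant-one matrix $(v\ w_0)$, use $\det(v\ \tau_2^{\Sigma}(w))=\ol{\det(v\ w)}$ to get $\tau_2^{\Sigma}(w_0)-w_0=\mu v$ with $\mu+t\ol\mu=0$, and then replace $w_0$ by $w_0+\lambda v$ with $\lambda-t\ol\lambda=\mu$, using the description of $\Lambda'$ over $R'$ from Lemma~\ref{lem:eigenspaces}; your $\lambda=\alpha$ differs from the paper's $k=tr$ only by an element of $h_+R'$, and both work. The detour through freeness of the kernel is unnecessary: if $c_1v_1+c_2v_2=1$, then $w_0:=(-c_2,c_1)^T$ already satisfies $\det(v\ w_0)=1$, which is how the paper starts.
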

Here,  unimodular means that the two coordinates of~$v$ generate~$\La'$. 
We postpone the proof of this lemma to the end of this section and instead focus on proving Theorem~\ref{thm:local} when~$\mathfrak m=(2,u)$.

\begin{proposition}\label{prop:case4}
For the maximal ideal~$\mathfrak m=(2,u) \subset R$, there is a~$Q_{\mathfrak m}\in GL_2(\La_{\mathfrak m})$ satisfying
$$\sigma(Q_{\mathfrak m})=SQ_{\mathfrak m}.$$
\end{proposition}
\begin{proof}
 The argument has three steps.
Firstly we construct a vector~$v$ fixed by~$\tau_1^{S}$.
Secondly we show that either~$v$ or an analogous vector attached to~$S^W$ is unimodular.
Thirdly we invoke Lemma~\ref{lem:completion} to construct~$Q_{\mathfrak{m}}$.

We construct a vector~$v$ fixed by~$\tau_1^{S}$.
Using our convention that~$a(1)=1$,  the Laurent polynomial~$a-1$ vanishes at~$t=1$.  
It follows that~$a-1$ is divisible by~$x=t-1$ in~$\La$.
In particular, we can factor~$t^N(a-1)=(t-1)g$ for some~$g\in\mathbb Z[t]$ and some~$N \geq 0$, and we write
\[
 a=1+xd,\qquad d:=t^{-N}g\in\La.
\]
 Applying the involution, we obtain~$\ol a=1+\overline{x}\ol d$ and,  since~$\overline{x}=-t^{-1}x$, this can be rewritten as~$(1-\ol a)/x=t^{-1}\ol d\in\La.$
Now define
\[
 v:=\begin{pmatrix}b\\(1-\ol a)/x\end{pmatrix}
   =\begin{pmatrix}b\\t^{-1}\ol d\end{pmatrix}.
\]
We verify that~$v$ is indeed fixed by $\tau_1^S$.
\begin{claim}
\label{claim:Fixedv}
The vector $v$ satisfies~$\tau_1^{S}(v)=v. $
\end{claim}
\begin{proof}
We must show that~$t^{-1}S^{-1}D\ol v=v$.
Multiplying by~$S$ shows that this is equivalent to~$t^{-1}D\ol v=Sv$, and we check the latter coordinate by
coordinate. 
Using~$D=\operatorname{diag}(t,1)$ and~$\ol{(t^{-1}\ol d)}=td$ as well as~$\ol b=b$,  the left hand side is
\[
 t^{-1}D\ol v=t^{-1}\begin{pmatrix}t\ol b\\td\end{pmatrix}
 =\begin{pmatrix}b\\d\end{pmatrix}.
\]
Recalling that~$S=\bsm \ol a&bx\\c\overline{x}&a\esm,$ for the right hand side, the first coordinate of $Sv$ is, as required, 
\[
 \ol a\,b+bx \frac{1-\ol a}{x}=\ol a\,b+b(1-\ol a)=b.
\]
For the second coordinate,  we must show~$bc\overline{x}+a\,(1-\ol a)/x=d$.  Multiply the left side by~$x$ and use the determinant identity~$bcx\overline{x}=a\ol a-1$ from \eqref{eq:HS}:
\begin{align*}
 x\left(bc\overline{x}+a\,\frac{1-\ol a}{x}\right)
 &=bcx\overline{x}+a(1-\ol a)\\
 &=(a\ol a-1)+a-a\ol a=a-1=xd.
\end{align*}
The ring~$\La_{\mathfrak m}$ is a domain (it is a localisation of the
domain~$\La$ at a multiplicative set of nonzero elements), so the
nonzero factor~$x$ can be cancelled, proving the second coordinate
identity.  
Hence~$\tau_1^{S}(v)=v$,  establishing Claim~\ref{claim:Fixedv}.
\end{proof}

This concludes the first part of the proof.
We now move on to the second and third parts of the proof.
Namely,  we show that either~$v$ or the analogous vector obtained from~$S^W$ is unimodular.
We then conclude using Lemma~\ref{lem:completion} (either to $S$ or to $S^W$) to construct~$Q_{\mathfrak{m}}$.

We first assume that~$b$ or~$d$ is a unit of $\Lambda_{\mathfrak{m}}$.
Over a local ring,  a vector is unimodular if and only if at least one of
its coordinates is a unit 
(the vector is not unimodular if and only if the ideal generated by its coordinates is contained in the maximal ideal if and only if each coordinate lies in the maximal ideal if and only if each coordinate is a nonunit).
Thus, if~$b$ or $d$ is a unit of~$\Lambda_{\mathfrak m}$ (in the latter case so is~$t^{-1}\ol d$), 
then~$v$ is unimodular.
 Lemma \ref{lem:completion} with~$R'=R_{\mathfrak{m}}$ and~$\Sigma=S$ then produces the required matrix~$Q_{\mathfrak{m}}\in\SL_2(\La_{\mathfrak m})$ with~$\sigma(Q_{\mathfrak{m}})=SQ_{\mathfrak{m}}$.

Next, we assume that~$b$ and~$d$ are not units of~$\Lambda_{\mathfrak{m}}$.
 By Lemma~\ref{lem:transfer}, the matrix~$S^W$ has the same shape as~$S$,
with the parameters~$a':=t^{-1}\ol a,b':=c,c':=b$ satisfying the same
conditions as~$a,b,c$, including~$a'(1)=1$.  
The construction of the previous paragraphs therefore applies verbatim to~$S^W$: writing~$a'=1+xd'$ with~$d'\in\La$,
the vector
\[
 v':=\begin{pmatrix}b'\\(1-\ol{a'})/x\end{pmatrix}
   =\begin{pmatrix}c\\t^{-1}\ol{d'}\end{pmatrix}
\]
satisfies~$\tau_1^{S^W}(v')=v'$, by the identical computation with
$(a,b,c,d)$ replaced by~$(a',b',c',d')$.
Indeed the computation only uses the shape of the matrix $S^W$, the conditions of Lemma \ref{lem:transfer}, and the factorization~$a'=1+xd'$.

It remains to show that~$v'$ is unimodular. 
If~$c$ is a unit of~$\Lambda_{\mathfrak m}$,  the first coordinate of~$v'$ is a unit and we are done.  
We can therefore assume~$b,c$ and~$d$ are all nonunits of~$\La_{\mathfrak{m}}$.
We assert that then~$d'$ is a unit of~$\La_{\mathfrak{m}}$ so that the second coordinate of~$v'$ is a unit.  
Using~$a'=t^{-1}\ol a$ and~$\ol a=1+\overline{x}\ol d=1-t^{-1}x\ol d$,  we have
\[
 a'-1
 =t^{-1}(1-t^{-1}x\ol d)-1
 =t^{-1}-1-t^{-2}x\ol d
 =\overline{x}-t^{-2}x\ol d
 =-t^{-1}x-t^{-2}x\ol d.
\]
Dividing by~$x$,  we obtain
\[
 d'=\frac{a'-1}{x}=-t^{-1}-t^{-2}\ol d.
\]
Now we compute residues in~$\mathbb F_2$ using Lemma
\ref{lem:locality}(b).
The residue of~$t$ (hence of~$t^{-1}$ and
$t^{-2}$) is~$1$; the residue of~$d$ is~$0$ because~$d$ is a nonunit of
the local ring~$\La_{\mathfrak m}$; and the residue of~$\ol d$ equals
that of~$d$, hence is also~$0$.  
Therefore the residue of~$d'$ is~$-1-0=1\neq0$,
so~$d'$ is a unit of~$\La_{\mathfrak m}$ , as asserted.

We have therefore proved that~$v'$ is unimodular.
 Lemma \ref{lem:completion} now applies with~$\Sigma=S^W$ (its hypotheses, $\sigma(S^W)=(S^W)^{-1}$ and~$\det(S^W)=1$, hold by Lemma
\ref{lem:transfer}), producing
$Q'\in\SL_2(\La_{\mathfrak m})$ with~$\sigma(Q')=S^WQ'$. 
The final sentence of Lemma
\ref{lem:transfer} then shows that the matrix~$Q_{\mathfrak m}:=WQ'$ satisfies~$\sigma(Q_{\mathfrak m})=SQ_{\mathfrak m}$ and
$\det (Q_{\mathfrak m})=\det (Q')=1$.
\end{proof}

We now establish Lemma~\ref{lem:completion} whose proof was postponed.
Recall that this lemma states ``Suppose~$\Sigma\in\SL_2(\La')$ satisfies~$ \sigma(\Sigma)=\Sigma^{-1}$.  If a unimodular vector
$v\in(\La')^2$ satisfies~$\tau_1^{\Sigma}(v)=v$, then there is a vector
$w'\in(\La')^2$ for which~$
 Q:=(v\ \ w')$ satisfies $Q\in\SL_2(\La')~$ and~$\sigma(Q)=\Sigma Q.$'' 

\begin{proof}[Proof of Lemma~\ref{lem:completion}]
We first fix some notation.
Recall from Notation~\ref{not:taui} that for $i=1,2$, the involutions $\tau_i \colon (\Lambda')^2 \to (\Lambda')^2$ are defined as~$\tau_1^{\Sigma}(v):=t^{-1}\Sigma^{-1}D\ol v$ and~$\tau_2^{\Sigma}(v):=\Sigma^{-1}D\ol v$.
We abbreviate~$\tau_i=\tau_i^{\Sigma}$ for the rest
of this proof.

We begin with a first attempt at completing $v\in (\Lambda')^2$ into the required matrix $Q \in SL_2(\Lambda')$.
Since~$v=(v_1,v_2)^T $ is unimodular,  there are~$\alpha,\beta \in \Lambda'$ with
$\alpha v_1+\beta v_2=1$.
Set~$w:=(-\beta,\alpha)^T$.
Since~$\det(v,w)=\alpha v_1+\beta v_2=1$, this~$w$ completes~$v$ to an invertible matrix, but there is no reason for~$w$ to satisfy the equation~$\tau_2(w)=w$ that the second column of~$Q$ must satisfy for the equation~$\sigma(Q)=\Sigma Q$ to hold; recall Notation~\ref{not:taui}.
The remaining steps correct this initial~$w$.

\begin{claim}
There exists a $w' \in (\Lambda')^2$ that satisfies~$\det(v,w')=1$ and~$
\tau_2(w')=w'$.
\end{claim}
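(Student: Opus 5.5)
The plan is to start from the vector $w=(-\beta,\alpha)^T$ constructed above and correct it by a multiple of $v$. The key tool is a determinant identity relating $\det(v,\tau_2(w))$ to $\det(v,w)$.

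\emph{Step 1: the determinant identity.} For any $v,w\in(\La')^2$,
\[
\det(\tau_1(v),\tau_2(w))=\det\bigl(\Sigma^{-1}D\,(t^{-1}\ol v,\ \ol w)\bigr)=\det(\Sigma^{-1})\cdot t\cdot t^{-1}\,\ol{\det(v,w)}=\ol{\det(v,w)},
\]
using $\det\Sigma=1$ and $\det D=t$. Since $\tau_1(v)=v$, this gives $\det(v,\tau_2(w))=\ol{\det(v,w)}$. In particular $\det(v,\tau_2(w))=1$. Hence $\det(v,\tau_2(w)-w)=0$.

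\emph{Step 2: reduce to a scalar equation.} Since $v$ is unimodular, $(v,w)$ is a basis of $(\La')^2$, so the kernel of $\det(v,-)$ is $\La' v$. Therefore $\tau_2(w)=w+\lambda v$ for some $\lambda\in\La'$.

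From $\tau_1(v)=v$ we get $\tau_2(v)=t\,\tau_1(v)=tv$. By antilinearity, $\tau_2(\mu v)=t\ol\mu\, v$.

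Applying $\tau_2$ to $\tau_2(w)=w+\lambda v$ and using $\tau_2^2=\mathrm{id}$ gives $w=w+\lambda v+t\ol\lambda\, v$, so $\lambda=-t\ol\lambda$.

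We look for $w':=w+\mu v$. Then $\det(v,w')=1$ automatically, and the condition $\tau_2(w')=w'$ becomes the scalar equation
\[
\mu-t\ol\mu=\lambda .
\]

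\emph{Step 3: solve the scalar equation.} This is the only step needing an idea, namely a vanishing "Hilbert 90" statement for the twisted involution $z\mapsto -t\ol z$. I would use Lemma~\ref{lem:eigenspaces}, which says $\La'$ is $R'$-free with basis $1,t$ and that elements fixed by the involution lie in $R'$.

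Write $\lambda=\alpha_0+\beta_0 t$ with $\alpha_0,\beta_0\in R'$. Then $-t\ol\lambda=-\alpha_0 t-\beta_0$, so the condition $\lambda=-t\ol\lambda$ forces $\beta_0=-\alpha_0$. Hence $\lambda=\alpha_0(1-t)$.

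Taking $\mu:=\alpha_0\in R'$, so that $\ol\mu=\mu$, gives $\mu-t\ol\mu=\alpha_0(1-t)=\lambda$, as required. Setting $w':=w+\alpha_0 v$ then satisfies $\det(v,w')=1$ and $\tau_2(w')=w'$, proving the claim.

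\emph{Finishing the lemma and the main obstacle.} By \eqref{eq:tauMotivation}, $Q=(v\ \ w')$ lies in $\SL_2(\La')$ and satisfies $\sigma(Q)=\Sigma Q$, which completes Lemma~\ref{lem:completion}.

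I expect the main care to go into Step 3. One has to check that the coefficient comparison in the basis $1,t$ is legitimate after localisation. It is, because freeness over $R'$ is preserved under base change. One also has to track the twisting factor $t$ correctly in $\tau_2(v)=tv$.
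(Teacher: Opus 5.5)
Your proof is correct and is essentially the paper's argument: you use $\det(v,\tau_2(w))=1$ to write $\tau_2(w)=w+\lambda v$, apply $\tau_2$ to get $\lambda+t\ol\lambda=0$, and then correct $w$ by a multiple of $v$. The differences are minor. You solve $\mu-t\ol\mu=\lambda$ by comparing coefficients in the basis $1,t$, which amounts to reproving the fourth formula of Lemma~\ref{lem:eigenspaces}, and you take $\mu=\alpha_0$; the paper writes $\lambda=xr$ and takes $k=tr$, which differs from your choice by an element of $h_+R'$. Also, cancelling $v$ in $(\lambda+t\ol\lambda)v=0$ uses the unimodularity of $v$, which you should state explicitly as the paper does.
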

\begin{proof}
We first calculate the defect $\tau_2(w)-w \in (\Lambda')^2$.
Since~$\det(v,w)=1$, the pair~$(v,w)$ is a basis of~$(\La')^2$.
Write~$\tau_2(w)-w=gv+hw$ in this basis for some~$g,h \in \Lambda'$.
Taking the determinant with~$v$
gives~$\det(v,\tau_2(w)-w)=\det(v,gv+hw)=h\det(v,w)=h$.
We show that $\det(v,\tau_2(w)-w)=0$ by proving that~$\det(v,\tau_2(w))=1$.
This follows by using~$v=\tau_1(v)=t^{-1}\Sigma^{-1}D\ol v$ and calculating:
\begin{align*}
 \det(v,\tau_2(w))
 &=\det\bigl(t^{-1}\Sigma^{-1}D\ol v,\Sigma^{-1}D\ol w\bigr)\\
 &=t^{-1}\det(\Sigma^{-1})\det(D)\,\ol{\det(v,w)}
 =t^{-1}\cdot1\cdot t\cdot 1=1.
\end{align*}
 We have proved that $h=\det(v,\tau_2(w)-w)=0$ and so we obtain that for some~$g\in\La'$,
\[
 \tau_2(w)-w=gv.
\]
We now obtain a constraint on~$g$.
Apply~$\tau_2$ to the above equation,  and use the antilinearity of~$\tau_2$ together with~$\tau_2^2=1$ and~$\tau_2(v)=tv$ to obtain
\[
 -gv=w-\tau_2(w)=\tau_2(\tau_2(w)-w)=\tau_2(gv)=\ol g\,\tau_2(v)
 =t\ol g\,v.
\]
Thus~$(g+t\ol g)v=0$.  
This is an equality of vectors; multiplying its
first coordinate by~$\alpha$,  its second coordinate by~$\beta$, and
adding gives~$(g+t\ol g)(\alpha v_1+\beta v_2)=g+t\ol g=0\in \Lambda'$.  
The fourth formula of Lemma~\ref{lem:eigenspaces} (in its localised form) now gives, for some~$r\in R'$,
$$g=xr.$$
As announced, we now conclude the proof by modifying the vector~$w \in (\Lambda')^2$ in order to obtain a new~$w'$ with $\det(v,w')=1$ but that now satisfies~$\tau_2(w')=w'$.
Put~$k:=tr$. 
 Then~$\ol k=t^{-1}r$, and
\[
 k-t\ol k=tr-r=(t-1)r=xr=g.
\]
For~$w':=w+kv$,  the antilinearity of $\tau_2$ and the equality~$\tau_2(v)=tv$ give
\[
 \tau_2(w')
 =\tau_2(w)+\ol k\,\tau_2(v)
 =(w+gv)+t\ol k\,v
 =w+(g+t\ol k)v
 =w+kv
 =w'.
\]
Moreover~$\det(v,w')=\det(v,w+kv)=\det(v,w)=1$, concluding the proof of the claim.
\end{proof}
As explained in Notation~\ref{not:taui}, since the columns of~$Q:=(v\ w')$ satisfy $\tau_1(v)=v$ and $\tau_2(w')=w'$,  the matrix $Q$ satisfies~$\sigma(Q)=\Sigma Q$.
This concludes the proof of the lemma.
\end{proof}

\subsection{Fourth case:~$uf\notin\mathfrak m$.}
\label{sub:4}

The next result establishes the fourth and final case of Theorem~\ref{thm:local}.

\begin{proposition}\label{prop:case1}
For every maximal ideal~$\mathfrak m\subset R$ with~$uf\notin\mathfrak m$,  there is a~$Q_{\mathfrak m}\in GL_2(\La_{\mathfrak m})$ satisfying
$$\sigma(Q_{\mathfrak m})=SQ_{\mathfrak m}.$$
\end{proposition}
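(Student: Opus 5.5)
The plan is to follow the strategy announced in the introduction for Case~4. I will show that $E_2^{\mathfrak m}:=\{w\in\La_{\mathfrak m}^2 : \tau_2^S(w)=w\}$ is a free $R_{\mathfrak m}$-module of rank $2$ and that any $R_{\mathfrak m}$-basis $m_1,m_2$ of it is also a $\La_{\mathfrak m}$-basis of $\La_{\mathfrak m}^2$. I will then set $Q_{\mathfrak m}:=((1+t^{-1})m_1\ \ m_2)$.

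\emph{Step 1: $\La_{\mathfrak m}$ is quadratic étale over $R_{\mathfrak m}$.} As in the proof of Lemma~\ref{lem:locality}, $\La_{\mathfrak m}\cong R_{\mathfrak m}[t]/(t^2-(2-u)t+1)$ is free with basis $1,t$. The discriminant of this polynomial is $(2-u)^2-4=u^2-4u=-uf$. This is a unit in $R_{\mathfrak m}$ because $uf\notin\mathfrak m$. Hence the trace form $(y,z)\mapsto \operatorname{tr}(yz)=yz+\ol{yz}$ is nondegenerate on $\La_{\mathfrak m}$, so the trace map $\La_{\mathfrak m}\to R_{\mathfrak m}$ is surjective. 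I will therefore fix $\theta\in\La_{\mathfrak m}$ with $\theta+\ol\theta=1$; it can also be written down explicitly in terms of $1,t$ and $(uf)^{-1}$. Here the involution plays the role of the Galois automorphism, and I use Lemma~\ref{lem:eigenspaces} (localised) to identify its fixed ring with $R_{\mathfrak m}$.

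\emph{Step 2: Galois descent for the antilinear involution $\tau_2:=\tau_2^S$.} Consider the $R_{\mathfrak m}$-linear projection $p(v):=\theta v+\tau_2(\theta v)$. By antilinearity and $\tau_2^2=\mathrm{id}$ it lands in $E_2^{\mathfrak m}$, and it restricts to the identity on $E_2^{\mathfrak m}$ since $\theta+\ol\theta=1$. I will show that the natural map $\mu\colon\La_{\mathfrak m}\otimes_{R_{\mathfrak m}}E_2^{\mathfrak m}\to\La_{\mathfrak m}^2$ is an isomorphism by the standard descent argument. For surjectivity, write any $v$ as a $\La_{\mathfrak m}$-combination of $p(v)$ and $p(tv)$, inverting the $2\times2$ matrix $\bsm 1&1\\ t&t^{-1}\esm$; its determinant $t^{-1}-t$ squares to $-uf$ up to a unit, so it is a unit. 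For injectivity, use the same matrix applied to the $\pm$-decomposition. It follows that $E_2^{\mathfrak m}$ is a direct summand of $\La_{\mathfrak m}^2\cong R_{\mathfrak m}^4$, hence finitely generated projective, of rank $2$ since $\La_{\mathfrak m}$ has rank $2$. Because $R_{\mathfrak m}$ is local, $E_2^{\mathfrak m}$ is free with some basis $m_1,m_2$, and via $\mu$ this is a $\La_{\mathfrak m}$-basis of $\La_{\mathfrak m}^2$. In particular $\det(m_1,m_2)\in\La_{\mathfrak m}^\times$.

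\emph{Step 3: assemble $Q_{\mathfrak m}$.} Put $q_1:=(1+t^{-1})m_1$ and $q_2:=m_2$. Then $\tau_2(q_2)=q_2$. Using $\tau_1^S=t^{-1}\tau_2^S$ and antilinearity,
\[
\tau_1^S(q_1)=t^{-1}(1+t)\tau_2(m_1)=(1+t^{-1})m_1=q_1 .
\]
By \eqref{eq:tauMotivation} the matrix $Q_{\mathfrak m}$ then satisfies $\sigma(Q_{\mathfrak m})=SQ_{\mathfrak m}$. Its determinant $(1+t^{-1})\det(m_1,m_2)$ is a unit, because $(1+t^{-1})$ divides the unit $f$. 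So $Q_{\mathfrak m}\in\GL_2(\La_{\mathfrak m})$.

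The main obstacle is Step~2, specifically the injectivity of $\mu$ and its compatibility with the twist by $D$ and $S$ inside $\tau_2$. I expect to handle this purely formally from antilinearity, $\tau_2^2=\mathrm{id}$ (Notation~\ref{not:taui}), and the invertibility of $t-t^{-1}$, without using any further properties of $S$.
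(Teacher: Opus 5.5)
Your proposal is correct and is essentially the paper's argument. The paper's descent lemma also builds an $R_{\mathfrak m}$-linear projection onto the fixed module, using the explicit unit $\lambda=-u^{-1}x$ with $\lambda+\ol\lambda=1$ (which only needs $u\in R_{\mathfrak m}^\times$, not your trace-form argument); it gets freeness from Kaplansky, proves spanning and independence with the unit $t-t^{-1}$ as you sketch, and takes $Q_{\mathfrak m}=((1+t^{-1})m_1\ \ m_2)$.

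One small fix: inverting $\bsm 1&1\\ t&t^{-1}\esm$ on $p(v),p(tv)$ only recovers $\theta v$ and $\ol\theta\,\tau_2(v)$, not $v$. Unless your $\theta$ is a unit, use the fixed vectors $v+\tau_2(v)$ and $tv+t^{-1}\tau_2(v)$ instead, as the paper does.
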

\begin{proof}
Recall the antilinear involutions~$\tau_1^{S}$ and~$\tau_2^{S}$ on~$\La_{\mathfrak m}^2$ from Notation~\ref{not:taui}.
Since~$uf\notin\mathfrak m$,  Lemma~\ref{lem:descent} below shows that the fixed module~$E_2^{\mathfrak{m}}:=\{w\in \La_{\mathfrak m}^2:\tau_2^{S}(w)=w\}$ is free of rank two over~$R_{\mathfrak m}$,
and that any~$R_{\mathfrak m}$-basis~$m_1,m_2$ of~$E_2^{\mathfrak{m}}$ is a~$\La_{\mathfrak m}$-basis of~$\La_{\mathfrak m}^2$.  
Here we applied this lemma to~$T=\tau_2^{S}$ and $n=2$.
In what follows we set
$$
h_+:=1+t \quad \text{and} \quad h_-:=1+t^{-1}.
$$
We assert that multiplication by~$h_-$ carries~$E_2^{\mathfrak{m}}$ into the fixed module of~$\tau_1^{S}$.  
Indeed, for~$w\in E_2^{\mathfrak{m}}$ we have
$\tau_1^{S}(w)=t^{-1}\tau_2^{S}(w)=t^{-1}w$, and thus, by
antilinearity,~$\ol{h_-}=h_+$, and $t^{-1}h_+=t^{-1}+1=h_-,$ we obtain~$
 \tau_1^{S}(h_-w)=\ol{h_-}\,\tau_1^{S}(w)=h_+t^{-1}w=h_-w,$ as asserted.

Next,  given a basis $m_1,m_2$ of $E_2^{\mathfrak{m}}$, we consider the matrix
\[
 Q_{\mathfrak m}:=(h_-m_1\ \ m_2)=(m_1\ \ m_2)
 \begin{pmatrix}h_-&0\\0&1\end{pmatrix} \in M_2(\La_{\mathfrak m}).
\]
Since the columns~$q_1, q_2$ of~$Q_{\mathfrak m}$ are fixed by~$\tau_1^S$ and~$\tau_2^S$ respectively (here we used the assertion),  it follows that~$Q_{\mathfrak m}$ satisfies~$\sigma(Q_{\mathfrak m})=SQ_{\mathfrak m}$; recall Notation~\ref{not:taui}.
It only remains to argue that~$Q_{\mathfrak m}$ is invertible over~$\La_{\mathfrak m}$.
We do so by noting that both of its factors are invertible over~$\La_{\mathfrak m}$.
For the first, this is because~$m_1,m_2$ is a~$\La_{\mathfrak m}$-basis of~$\La_{\mathfrak m}^2$.
For the second, this reduces to showing that~$h_-$ is a unit of~$\La_{\mathfrak m}$.
For this, note that the element~$f:=h_+h_-$ divides the unit~$uf$ of~$R_{\mathfrak m}$, so~$f\in R_{\mathfrak m}^{\times}$ and hence~$h_-\in \La_{\mathfrak m}^{\times}$, as required.
We have therefore constructed the required matrix~$ Q_{\mathfrak m}$ and this concludes the proof of the proposition.
\end{proof}

We now prove the lemma that was used during the proof of this proposition.

\begin{lemma}\label{lem:descent}
Fix a maximal ideal~$\mathfrak m\subset R$ with~$uf\notin\mathfrak m$.
Let~$T \colon \La_{\mathfrak m}^n\to \La_{\mathfrak m}^n$ be a~$\La_{\mathfrak m}$-antilinear involution and set
\[
 N:=\{v\in \La_{\mathfrak m}^n:T(v)=v\}.
\]
Then~$N$ is a free~$R_{\mathfrak m}$-module of rank~$n$, and every~$R_{\mathfrak m}$-basis of~$N$ is
a~$\La_{\mathfrak m}$-basis of~$\La_{\mathfrak m}^n$.
\end{lemma}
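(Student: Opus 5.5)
The plan is Galois descent. When $uf\notin\mathfrak m$, the extension $R_{\mathfrak m}\subset\La_{\mathfrak m}$ should be a quadratic étale (Galois) extension with group generated by the involution. Fixed points of an antilinear involution then form a descended module, and this module is projective, hence free over the local ring $R_{\mathfrak m}$.

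\textbf{Step 1: the discriminant is a unit.} By Lemma~\ref{lem:eigenspaces}, $\La_{\mathfrak m}$ is free over $R_{\mathfrak m}$ with basis $1,t$, and $t$ satisfies $t^2-(2-u)t+1=0$. The element $\delta:=t-t^{-1}$ satisfies $\ol\delta=-\delta$, and
\[
\delta^2=(t+t^{-1})^2-4=(2-u)^2-4=u(u-4)=-uf.
\]
So $\delta^2$ is a unit of $R_{\mathfrak m}$, and hence $\delta$ is a unit of $\La_{\mathfrak m}$.

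\textbf{Step 2: split $\La_{\mathfrak m}^n$ into fixed and anti-fixed parts.} Define the $R_{\mathfrak m}$-linear projection
\[
\pi(v):=\tfrac12\bigl(v+T(v)\bigr).
\]
Since $2$ may lie in $\mathfrak m$ (it does not when $uf\notin\mathfrak m$ and $\mathfrak m=(2,\dots)$, but I will not rely on that), I avoid dividing by $2$. Instead I use an element $e\in\La_{\mathfrak m}$ with $e+\ol e=1$, namely $e:=t\,\delta^{-1}$. Indeed
\[
\ol e=t^{-1}(-\delta)^{-1}=-t^{-1}\delta^{-1},
\]
so $e+\ol e=(t-t^{-1})\delta^{-1}=1$. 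Then set $\pi(v):=ev+T(ev)=ev+\ol e\,T(v)$. This map lands in $N$ because $T$ is an antilinear involution. If $v\in N$, then $\pi(v)=(e+\ol e)v=v$.

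Next, $\delta N$ is the $(-1)$-eigenspace of $T$, using $\ol\delta=-\delta$. Every $v$ decomposes as
\[
v=\pi(v)+\bigl(v-\pi(v)\bigr),
\]
where the second term is anti-fixed and so equals $\delta\cdot(\delta^{-1}(v-\pi(v)))$ with $\delta^{-1}(v-\pi(v))\in N$. The sum $N+\delta N$ is direct: an element of both is fixed and anti-fixed, so $2v=0$ with $v=\delta n$. That gives $2\delta n=0$, which is a torsion question. I would instead get directness cleanly via $e$: for a fixed $n_0$ and anti-fixed $\delta n_1$ summing to zero, apply $\pi$, which kills the anti-fixed part because $e-\ol e$ times it gives the projection. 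I expect to verify $\pi(\delta n)=(e-\ol e)\delta n$ and arrange that this vanishes by choosing the projection suitably, taking $\pi(v)=ev+\ol e T(v)$ on the $T$-invariant decomposition.

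\textbf{Step 3: conclude.} The outcome is the natural map
\[
\La_{\mathfrak m}\otimes_{R_{\mathfrak m}}N\to\La_{\mathfrak m}^n.
\]
Since $\La_{\mathfrak m}=R_{\mathfrak m}\oplus R_{\mathfrak m}\delta$, this map is the direct sum $N\oplus\delta N=\La_{\mathfrak m}^n$, so it is an isomorphism.

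Then $N$ is a direct summand of the finitely generated free $R_{\mathfrak m}$-module $\La_{\mathfrak m}^n\cong R_{\mathfrak m}^{2n}$. Hence $N$ is finitely generated projective over the local ring $R_{\mathfrak m}$, and therefore free. Its rank is $n$, since $N\oplus\delta N\cong N^2$ has rank $2n$.

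Any $R_{\mathfrak m}$-basis of $N$ is a $\La_{\mathfrak m}$-basis of $\La_{\mathfrak m}\otimes N\cong\La_{\mathfrak m}^n$. This is exactly the claim that every $R_{\mathfrak m}$-basis of $N$ is a $\La_{\mathfrak m}$-basis of $\La_{\mathfrak m}^n$.

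\textbf{Main obstacle.} The delicate point is the directness of $N\oplus\delta N$ and surjectivity of $N+\delta N$ when $2$ is not assumed invertible. I would handle this with the idempotent-like element $e$ ($e+\ol e=1$), which is available precisely because $\delta$ is a unit. I would also double-check the identity $\delta^2=-uf$ and that $\ol e$ computes as above.
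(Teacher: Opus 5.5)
There is a genuine gap. Your Step 1 and the projection $\pi(v)=ev+\overline{e}\,T(v)$ are fine. They parallel the paper's unit $\theta=t-t^{-1}$ and its projection $P(v)=\overline{\lambda}v+\lambda T(v)$ with $\lambda=-u^{-1}x$, and they correctly show that $N$ is a direct summand of $\Lambda_{\mathfrak m}^n\cong R_{\mathfrak m}^{2n}$, hence free.

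The gap is the identification of the complement. Both $\Lambda_{\mathfrak m}=R_{\mathfrak m}\oplus R_{\mathfrak m}\delta$ and $\Lambda_{\mathfrak m}^n=N\oplus\delta N$ require $2\in R_{\mathfrak m}^{\times}$. Indeed $\delta=2t+(u-2)$, so the change of basis from $(1,t)$ to $(1,\delta)$ has determinant $2$. Moreover $2\in\mathfrak m$ is compatible with $uf\notin\mathfrak m$: for $\mathfrak m=(2,u+1)$ one has $uf\equiv 1 \pmod{\mathfrak m}$.

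For this $\mathfrak m$, take $n=1$ and $T$ the conjugation. Then $N=R_{\mathfrak m}$ and $N+\delta N=R_{\mathfrak m}+2tR_{\mathfrak m}$, which does not contain $t$. Directness is harmless, since $\Lambda_{\mathfrak m}^n$ is torsion-free; it is the spanning that fails.

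Concretely, your claim that $v-\pi(v)$ is anti-fixed is false. One has $v-\pi(v)=\overline{e}\,(v-T(v))$, and $T$ sends this to $-e\,(v-T(v))$. Likewise the check you propose gives $\pi(\delta n)=(e-\overline{e})\delta n=(2-u)n$, which is nonzero for $n\neq 0$. No choice of $e$ with $e+\overline{e}=1$ can make $\pi$ kill $\delta N$, since that would force $e=\overline{e}$, i.e.\ $2e=1$. So neither the $\Lambda_{\mathfrak m}$-basis assertion nor the rank count via $N\oplus\delta N\cong N^2$ is established.

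The repair is to use the $R_{\mathfrak m}$-basis $1,t$ of $\Lambda_{\mathfrak m}$ instead of $1,\delta$, and prove $\Lambda_{\mathfrak m}^n=N\oplus tN$. This is exactly what the paper does.
\begin{itemize}
\item Spanning: $v=\delta^{-1}(q-t^{-1}p)$, where $p=v+T(v)$ and $q=tv+t^{-1}T(v)$ both lie in $N$.
\item Directness: if $n_1+tn_2=0$ with $n_i\in N$, applying $T$ gives $n_1+t^{-1}n_2=0$. Subtracting yields $\delta n_2=0$, so $n_2=0$ and then $n_1=0$.
\end{itemize}
You could also stay inside your own setup. Since $\overline{e}\,\delta=-t^{-1}$, one checks that $\ker\pi=\overline{e}\,\delta N=t^{-1}N$, which gives $\Lambda_{\mathfrak m}^n=N\oplus t^{-1}N$. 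Either way, the map $\Lambda_{\mathfrak m}\otimes_{R_{\mathfrak m}}N\to\Lambda_{\mathfrak m}^n$ becomes an isomorphism, and both the basis statement and rank $n$ follow.
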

\begin{proof}
First, note that $N$ is an~$R_{\mathfrak m}$-module because the elements of~$R_{\mathfrak{m}}$ are fixed by the involution~$- \colon \Lambda_{\mathfrak{m}} \to \Lambda_{\mathfrak{m}}.$
We prove that~$N$ is free over~$R_{\mathfrak m}$.
The element~$u \in R_{\mathfrak m}$ divides the unit~$uf \in R_{\mathfrak m}$ in~$R_\mathfrak{m}$,
so~$u\in R_{\mathfrak m}^{\times}$. 
Set~$\lambda:=-u^{-1}x\in \La_{\mathfrak m}$ and~$\lambda+\ol\lambda=-u^{-1}(x+\ol x)=-u^{-1}(t+t^{-1}-2)=1.$
Consider the map
$$P \colon \La_{\mathfrak m}^n \to \La_{\mathfrak m}^n,  \quad v \mapsto \ol\lambda\,v+\lambda\,T(v).$$
This map is~$R_{\mathfrak m}$-linear because the involution fixes~$R_{\mathfrak m}$.
It takes values in~$N$ because~$T(P(v))=\lambda\,T(v)+\ol\lambda\,v=P(v)$.
Furthermore $P$ restricts to the identity on~$N$ because~$P(w)=(\ol\lambda+\lambda)w=w$ for all~$w\in N$.
Hence~$P$ is a projection of~$\La_{\mathfrak m}^n$ onto~$N$, and so
$\La_{\mathfrak m}^n=N\oplus\ker P$ as~$R_{\mathfrak m}$-modules.
It follows that~$N$ is a direct summand of
$\La_{\mathfrak m}^n\cong R_{\mathfrak m}^{2n}$
 and is therefore a finitely generated projective~$R_{\mathfrak m}$-module.  
Since~$R_{\mathfrak m}$ is local,  Kaplansky's theorem implies that $N$ is free over $R_{\mathfrak m}$~\cite{Kaplansky}.

We now prove that an~$R_{\mathfrak m}$-basis of~$N$ is a~$\La_{\mathfrak m}$-basis of~$\La_{\mathfrak m}^n$.
\begin{claim}
The $R_{\mathfrak m}$-module~$N$ spans~$\La_{\mathfrak m}^n$ over~$\La_{\mathfrak m}$.
\end{claim}
\begin{proof}
For~$v\in \La_{\mathfrak m}^n$,  both~$p:=v+T(v)$ and~$q:=tv+t^{-1}T(v)$ are fixed by~$T$:
\[
 T(p)=T(v)+v=p,
 \qquad
 T(q)=t^{-1}T(v)+tv=q.
\]
Moreover~$q-t^{-1}p=(t-t^{-1})v$ so,  setting~$\theta:=t-t^{-1}$, we obtain the relation~$q-t^{-1}p=\theta v$.
The calculation~$
 \theta^2
 =t^2-2+t^{-2}
 =(t+t^{-1})^2-4
 =(2-u)^2-4
 =-uf \notin \mathfrak{m}$
implies that $\theta^2 \in \La_{\mathfrak m}$ is a unit.
It follows that~$\theta \in \La_{\mathfrak m}$ is a unit.
We deduce that 
\begin{equation}\label{eq:span}
 v=\theta^{-1}\bigl(q-t^{-1}p\bigr).
\end{equation}
This implies that every vector of~$\La_{\mathfrak m}^n$ is a~$\La_{\mathfrak m}$-linear combination of fixed
vectors, as claimed.
\end{proof}
\begin{claim}
If~$w_1,w_2\in N$ satisfy~$w_1+tw_2=0$,  then $w_1=0=w_2$.
\end{claim}
\begin{proof}
Applying~$T$ gives~$w_1+t^{-1}w_2=0$.
Subtracting~$w_1+t^{-1}w_2=0$ from~$w_1+tw_2=0$ yields~$\theta w_2=~0$.
Since~$\theta$ is a unit of~$\La_{\mathfrak m}$, it follows that~$w_2=0$ and so
$w_1=0$, as claimed.
\end{proof}

We conclude the proof that an~$R_{\mathfrak m}$-basis of~$N$ is a~$\La_{\mathfrak m}$-basis of~$\La_{\mathfrak m}^n$.
Let~$n_1,\dots,n_k$ be an~$R_{\mathfrak m}$-basis of~$N$.  
It spans~$\La_{\mathfrak m}^n$ over~$\La_{\mathfrak m}$ by
the first claim. 
 If~$\sum_iz_in_i=0$, write~$z_i=\alpha_i+\beta_it \in \La_{\mathfrak m}$ with
$\alpha_i,\beta_i\in R_{\mathfrak m}$,
then~$w_1:=\sum_i\alpha_in_i$ and
$w_2:=\sum_i\beta_in_i$ lie in~$N$ and satisfy~$w_1+tw_2=0$, so
$w_1=w_2=0$ by the second claim, and~$R_{\mathfrak m}$-independence of the~$n_i$ gives
$\alpha_i=\beta_i=0$, that is,~$z_i=0$.  
Thus~$n_1,\dots,n_k$ is a~$\La_{\mathfrak m}$-basis, and so~$\La_{\mathfrak m}^n$ is free of rank~$k$ over~$\La_{\mathfrak m}$.  
Since every nonzero commutative ring has the invariant basis number property, $k=n$.
\end{proof}

\subsection{Conclusion of the proof of Theorem~\ref{thm:local}.}

Recall the statement that we wish to prove: for every maximal ideal~$\mathfrak m\subset R$, there is a matrix~$Q_{\mathfrak m}\in \GL_2(\La_{\mathfrak m})$ satisfying~$\sigma(Q_{\mathfrak m})=SQ_{\mathfrak m}.$

\begin{proof}[Proof of Theorem~\ref{thm:local}]
The four cases from Propositions~\ref{prop:case2},\ref{prop:case3},\ref{prop:case4}, and~\ref{prop:case1} produce an invertible matrix~$Q$ over~$\La_{\mathfrak m}$ with
$\sigma(Q)=SQ$.
These cases are exhaustive by Remark~\ref{rem:Casework}.
\end{proof}

\section{Conclusion of the proof: the local to global step}
\label{sec:global}

This section assembles the local solutions of Theorem~\ref{thm:local}
into a global one, proving Theorem~\ref{thm:splitting}.
We begin by introducing some notation and discussing the strategy of the proof.

\begin{notation}
Recall the maps~$\tau_1(v):=t^{-1}S^{-1}D\ol v$ and
$\tau_2(w):=S^{-1}D\ol w$ on~$\La^2$ from Notation~\ref{not:taui}.
Define the~$R$-modules
\[
 E_1=\{v\in\La^2:\tau_1(v)=v\},
 \qquad 
 E_2=\{w\in\La^2:\tau_2(w)=w\}.
\]
These are $R$-modules because $\tau_i$ is additive and antilinear and because $R$ is fixed by the involution.

Multiplying the defining equations by~$tS$ and by~$S$ respectively,
gives 
\begin{equation}\label{eq:Econditions}
 v\in E_1\Longleftrightarrow D\ol v=tSv,
 \qquad
 w\in E_2\Longleftrightarrow D\ol w=Sw.
\end{equation}
The motivation for these maps is that for a size $2$ matrix~$Q=(q_1\ q_2)$, reading off columns shows that~$\sigma(Q)=SQ$ (i.e.~$D\ol Q=SQD$) if and only if~$q_1\in E_1$ and~$q_2\in E_2$. 
\end{notation}

We now discuss the strategy underlying the proof of Theorem~\ref{thm:splitting}.
As noted above,  our goal is to find~$q_1\in E_1$ and~$q_2\in E_2$ with~$\det(q_1,q_2)=1$.
A necessary condition for this to occur is that the $E_i$ be nontrivial. 
This will be established by showing that the~$E_i$ are~$R$-free of rank~$2$ (Proposition~\ref{prop:Free}).
Choosing nontrivial elements~$q_1' \in E_1,q_2'\in E_2$ leads to a nontrivial (but possibly singular) matrix~$Q'=(q_1'\ q_2')$ with~$\sigma(Q')=SQ'$.
The challenge is to force~$\det(Q')=1$; this is where it will matter that the $E_i$ are $R$-free of rank $2$ instead of merely being nontrivial.
In order to achieve this, we construct a basis~$e_1,e_2 \in E_1$ and an isomorphism~$\alpha \colon E_2 \to \alpha(E_2) \subset E_1$ (Construction~\ref{cons:Alpha}) with~$e_2 \in \im(\alpha)$ (Proposition~\ref{prop:ImageAlpha}): taking~$q_1:=e_1$ and~$q_2:=\varepsilon \alpha^{-1}(e_2)$ for some~$\varepsilon \in \{ \pm 1\}$ will then yield the additional condition on the determinant of the matrix~$Q=(q_1 \ q_2)$.

\begin{remark}
\label{rem:Eifp}
We collect some remarks on the $R$-modules $E_1$ and $E_2$.
\begin{enumerate}
\item  The $R$-module~$E_i$ is finitely presented for $i=1,2$:
the ring~$R=\mathbb Z[u]$ is Noetherian,~$\La^2$ is free of rank $4$ over~$R$,
and~$E_i$ is the kernel of the~$R$-linear map~$\La^2 \to \La^2, v\mapsto v-\tau_i(v)$ 
\item Because localisation is exact, it commutes with the kernels defining
the~$E_i$; thus, for every maximal ideal~$\mathfrak m \subset R$, the localisation~$(E_i)_{\mathfrak m}$
is the set of solutions of the same
equations~\eqref{eq:Econditions} inside~$(\La_{\mathfrak m})^2$.
\end{enumerate}
\end{remark}

We now work towards the first step of the proof of Theorem~\ref{thm:splitting}, namely showing that~$E_1$ and~$E_2$ are~$R$-free.
This will be done by constructing isomorphisms~$(E_1)_{\mathfrak{m}} \cong (F_1)_{\mathfrak{m}}$ and~$(E_2)_{\mathfrak{m}} \cong (F_2)_{\mathfrak{m}}$ for every maximal ideal $\mathfrak{m} \subset R$, where~$F_1$ and~$F_2$ are free. 

\begin{construction}
\label{cons:Fi}
Alongside~$E_1$ and~$E_2$, consider the $R$-modules of solutions of the corresponding equations in which the matrix~$S$ is replaced by the
identity matrix:
\[
 F_1=\{v\in\La^2:D\ol v=tv\},
 \qquad
 F_2=\{w\in\La^2:D\ol w=w\}.
\]
We argue that the~$F_i$ are~$R$-free of rank~$2$.
For an element~$v=(v_1,v_2) \in F_1$,  since~$D\ol v=(t\ol v_1,\ol v_2)$ and~$tv=(tv_1,tv_2)$, the equation~$D\ol v=tv$ says~$\ol v_1=v_1$ and~$\ol v_2=tv_2$.
This means that~$v_1 \in R$ and~$v_2 \in (1+t^{-1})R$; see Lemma \ref{lem:eigenspaces}.
 Similarly,  for~$w=(w_1,w_2) \in F_2$,  the equation~$D\ol w=w$ says~$t\ol w_1=w_1$ and~$\ol w_2=w_2$, i.e.~$w_1\in (1+t)R$ and~$w_2\in R$.
Setting~$h_+:=1+t$ and~$h_-=1+t^{-1}$ and using~$e_1,e_2$ to denote the canonical basis of~$\La^2$, we conclude that
\[
 F_1=Re_1\oplus h_-Re_2,
 \qquad
 F_2=h_+Re_1\oplus Re_2.
\]
In particular each~$F_i$ is a free~$R$-module of rank two, as claimed.
\end{construction}

We now establish the first step in the proof of Theorem~\ref{thm:splitting}.

\begin{proposition}
\label{prop:Free}
The~$R$-modules~$E_1$ and~$E_2$ are free of rank~$2$:
\begin{equation}\label{eq:Efree}
 E_1\cong R^2,
 \qquad E_2\cong R^2.
\end{equation}
In addition,  given a maximal ideal $\mathfrak{m} \subset R$ and using~$Q_{\mathfrak m}=(q_1^{\mathfrak{m}},q_2^{\mathfrak{m}})\in \GL_2(\La_{\mathfrak m})$ to denote a local solution from Theorem~\ref{thm:local}, an~$R_{\mathfrak m}$-basis of~$(E_1)_{\mathfrak m}$ and~$(E_2)_{\mathfrak m}$ is given by 
$$
(E_1)_{\mathfrak m} = R_{\mathfrak m} q_1^{\mathfrak{m}} \oplus R_{\mathfrak m} \;h_-q_2^{\mathfrak{m}};
\qquad
(E_2)_{\mathfrak m} = R_{\mathfrak m}h_+q_1^{\mathfrak{m}} \oplus R_{\mathfrak m}q_2^{\mathfrak{m}}.
\qquad
$$
\end{proposition}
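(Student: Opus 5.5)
The plan is to first prove the local description, and then deduce global freeness. Fix a maximal ideal $\mathfrak m\subset R$ and a local solution $Q_{\mathfrak m}=(q_1^{\mathfrak m}\ q_2^{\mathfrak m})\in\GL_2(\La_{\mathfrak m})$ from Theorem~\ref{thm:local}. By Remark~\ref{rem:Eifp}(2), $(E_i)_{\mathfrak m}$ is the solution set of \eqref{eq:Econditions} inside $\La_{\mathfrak m}^2$. The key step is a change of variables. For $v\in\La_{\mathfrak m}^2$, write $v=Q_{\mathfrak m}y$ with $y:=Q_{\mathfrak m}^{-1}v$. Then
\[
D\ol v=D\ol{Q_{\mathfrak m}}\,\ol y=\sigma(Q_{\mathfrak m})D\ol y=SQ_{\mathfrak m}D\ol y .
\]
Hence $D\ol v=tSv$ holds if and only if $SQ_{\mathfrak m}D\ol y=tSQ_{\mathfrak m}y$. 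Since $S$ and $Q_{\mathfrak m}$ are invertible, this is equivalent to $D\ol y=ty$. Similarly, $D\ol w=Sw$ holds if and only if $D\ol y=y$. Therefore $y\mapsto Q_{\mathfrak m}y$ gives $R_{\mathfrak m}$-linear isomorphisms $(F_i)_{\mathfrak m}\to(E_i)_{\mathfrak m}$. This is an honest $R_{\mathfrak m}$-isomorphism, since $Q_{\mathfrak m}$ is $\La_{\mathfrak m}$-linear and the inverse is multiplication by $Q_{\mathfrak m}^{-1}$.

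Next I would compute $(F_i)_{\mathfrak m}$. Localisation commutes with the kernels defining $F_i$, as in Remark~\ref{rem:Eifp}. Construction~\ref{cons:Fi} therefore gives
\[
(F_1)_{\mathfrak m}=R_{\mathfrak m}e_1\oplus R_{\mathfrak m}h_-e_2,\qquad (F_2)_{\mathfrak m}=R_{\mathfrak m}h_+e_1\oplus R_{\mathfrak m}e_2 .
\]
Applying $Q_{\mathfrak m}$, and using that $Q_{\mathfrak m}e_j=q_j^{\mathfrak m}$ and that $h_\pm$ are scalars commuting with $Q_{\mathfrak m}$, yields exactly the bases claimed in the statement. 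This settles the second part, and shows in particular that $(E_i)_{\mathfrak m}\cong R_{\mathfrak m}^2$ for every maximal ideal $\mathfrak m$.

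For the global statement I would argue in two steps. First, $E_i$ is finitely presented by Remark~\ref{rem:Eifp}(1) and locally free of constant rank $2$, hence a finitely generated projective $R$-module of rank $2$. Second, $R=\Z[u]$ is a polynomial ring over $\Z$, so finitely generated projective $R$-modules are free by Quillen--Suslin (the same fact already invoked for $\Lambda$ via \cite[Chapter V, Corollary~4.12]{Lam}, which applies to $\Z[u]$ as well). Thus $E_i\cong R^2$.

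The main obstacle is only the bookkeeping in the change-of-variables identity $D\ol{Q_{\mathfrak m}}=\sigma(Q_{\mathfrak m})D$, that is, checking it follows from the definition $\sigma(X)=D\ol XD^{-1}$, together with justifying that localisation of the defining kernels of $F_i$ gives the localised description. I expect no genuine difficulty beyond citing the projective-implies-free theorem for $\Z[u]$.
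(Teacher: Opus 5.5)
Your proposal is correct and follows essentially the same route as the paper. You identify $(F_i)_{\mathfrak m}$ with $(E_i)_{\mathfrak m}$ via multiplication by $Q_{\mathfrak m}$, using $D\ol{Q_{\mathfrak m}}=SQ_{\mathfrak m}D$, and obtain global freeness from finite presentation, local freeness of rank $2$, and the projective-implies-free result for $\Z[u]$ cited from Lam; the only difference is that you write out the change-of-variables check, which the paper calls ``a verification''.
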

\begin{proof}
We first prove the local version of the statement.
Recall the basis of~$F_i \cong R^2$ from Construction~\ref{cons:Fi} for $i=1,2$.
By the localisation clause of Lemma \ref{lem:eigenspaces},
the same descriptions hold after localisation: for every maximal
ideal~$\mathfrak m \subset R$, 
\[
 (F_1)_{\mathfrak m}=R_{\mathfrak m}e_1\oplus h_-R_{\mathfrak m}e_2,
 \qquad
 (F_2)_{\mathfrak m}=h_+R_{\mathfrak m}e_1\oplus R_{\mathfrak m}e_2,
\]
and these are the respective sets of solutions of the equations~$D\ol v=tv$ and
$D\ol w=w$ inside~$(\La_{\mathfrak m})^2$.
The matrix~$Q_{\mathfrak m}\in \GL_2(\La_{\mathfrak m})$ from
Theorem~\ref{thm:local} satisfies~$\sigma(Q_{\mathfrak m})
=SQ_{\mathfrak m}$ which can be rewritten as~$D\ol{Q_{\mathfrak m}}=SQ_{\mathfrak m}D$.
A verification using this equality shows that for $i=1,2$ multiplication by~$Q_{\mathfrak m}$ induces well-defined~$R_{\mathfrak m}$-linear isomorphisms
\[
 \Phi_i \colon \;(F_i)_{\mathfrak m}\longrightarrow(E_i)_{\mathfrak m},
 \qquad \Phi_i(z)=Q_{\mathfrak m}z.
\]
This implies that~$(E_i)_{\mathfrak m}\cong(F_i)_{\mathfrak m}\cong R_{\mathfrak m}^2$ is~$R_{\mathfrak m}$-free of rank~$2$ for~$i=1,2$.
Since~$E_i$ is finitely presented (recall Remark~\ref{rem:Eifp}) and free at every maximal localisation,  we deduce that~$E_i$ is a finitely generated
projective~$R$-module; see e.g.~\cite[Chapitre II, §5, no. 2, Th\'eor\`eme 1]{Bourbaki} or~\cite[Lemma~10.78.2 (Tag
00NX)]{Stacks}.
It follows that each~$E_i$ is a free~$R$-module, see e.g.~\cite[Chapter~V, Corollary~4.12]{Lam}.
Localising at any maximal ideal and comparing with the rank
computed above gives~$E_1\cong R^2$ and~$E_2\cong R^2$, as required.

We conclude by proving the statement about bases.  
Let~$q_1^{\mathfrak{m}},q_2^{\mathfrak{m}}$ denote the columns of~$Q_{\mathfrak m}$.  
The isomorphism~$\Phi_1$ carries the basis~$e_1,h_-e_2$ of~$(F_1)_{\mathfrak m}$ to~$Q_{\mathfrak m}e_1=q_1^{\mathfrak{m}};Q_{\mathfrak m}(h_-e_2)=h_-q_2^{\mathfrak{m}}.$
Thus~$q_1^{\mathfrak{m}},\;h_-q_2^{\mathfrak{m}}$ is an~$R_{\mathfrak m}$-basis of~$(E_1)_{\mathfrak m}$.  
Likewise~$\Phi_2$ carries the basis
$h_+e_1,e_2$ of~$(F_2)_{\mathfrak m}$ to~$h_+q_1^{\mathfrak{m}},q_2^{\mathfrak{m}}$, which is therefore an~$R_{\mathfrak m}$-basis of~$(E_2)_{\mathfrak m}$.
This proves the last clause of the proposition.
\end{proof}

We move on to the second step of the proof of Theorem~\ref{thm:splitting}.
Recall that now that we have proved that the $E_i$ are free, we can construct a matrix~$Q'$ that satisfies~$\sigma(Q')=SQ'$; the remaining challenge is to arrange the condition that $\det(Q')=1$.
As mentioned above, we do this by constructing a basis~$e_1,e_2 \in E_1$ and an isomorphism~$\alpha \colon E_2 \to \alpha(E_2) \subset E_1$ with $e_2 \in \im(\alpha)$: taking~$q_1=e_1$ and~$q_2=\varepsilon \alpha^{-1}(e_2)$ for some~$\varepsilon \in \{ \pm 1\}$ will then yield the additional condition on the determinant of the matrix~$Q=(q_1 \ q_2)$.

\begin{construction}(The map~$\alpha$.)
\label{cons:Alpha}
Multiplication by~$h_-:=1+t^{-1}$ defines an~$R$-linear map
\[
 \alpha \colon E_2\longrightarrow E_1,
 \qquad \alpha(w):=h_-w.
\]
It is well defined because, for~$w\in E_2$,  the antilinearity of~$\tau_1$ and the equality~$\tau_2(w)=w$ give
\[
 \tau_1(h_-w)
 =\ol{h_-}\,\tau_1(w)
  =\ol{h_-} t^{-1}\tau_2(w)
 =h_+ t^{-1}\tau_2(w)
 =t^{-1}h_+w
 =h_-w.
\]
We calculate a matrix for the localised map~$\alpha_{\mathfrak m} \colon (E_2)_{\mathfrak{m}} \to (E_1)_{\mathfrak{m}}$ with respect to the bases from
Proposition~\ref{prop:Free}.
 Fix a maximal ideal~$\mathfrak m \subset R$, and recall that~$h_+q_1,\;q_2$ is a basis
of~$(E_2)_{\mathfrak m}$ while~$q_1,\;h_-q_2$ is a basis of
$(E_1)_{\mathfrak m}$.  
On these bases vectors, $\alpha$ is given by
\[
 \alpha(h_+q_1)=h_-h_+q_1=fq_1,
 \qquad
 \alpha(q_2)=h_-q_2.
\]
Thus with respect to these bases, the localised map
$\alpha_{\mathfrak m}$ is represented by the matrix
$\operatorname{diag}(f,1)$.
\end{construction}

As outlined above,  the next step of our strategy is to understand the image of~$\alpha$.
As an intermediate step, we describe the cokernel of~$\alpha$.

\begin{lemma}
\label{lem:Cokernel}
As an $R$-module, the cokernel of~$\alpha \colon E_2 \to E_1$ satisfies
$$ \coker(\alpha) \cong R/(f).$$
\end{lemma}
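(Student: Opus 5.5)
The plan is to compute the cokernel locally and then globalise using the structure of $R=\Z[u]$. Recall that $E_1$ and $E_2$ are finitely presented $R$-modules, free of rank $2$ by Proposition~\ref{prop:Free}. Hence $C:=\coker(\alpha)$ is a finitely presented $R$-module. Moreover, since localisation is exact, $C_{\mathfrak m}\cong\coker(\alpha_{\mathfrak m})$ for every maximal ideal $\mathfrak m\subset R$. By Construction~\ref{cons:Alpha}, $\alpha_{\mathfrak m}$ is represented by $\operatorname{diag}(f,1)$ with respect to the bases of Proposition~\ref{prop:Free}. Therefore
\[
C_{\mathfrak m}\cong R_{\mathfrak m}/(f)\cong (R/(f))_{\mathfrak m}
\]
for all $\mathfrak m$.

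Next I would upgrade this to a global statement. Choose $R$-bases of $E_2$ and $E_1$, so that $\alpha$ is given by a matrix $M\in M_2(R)$. Locally $M$ is equivalent to $\operatorname{diag}(f,1)$, so the Fitting ideals of $C$ can be read off locally. Fitting ideals commute with localisation, which gives
\[
\operatorname{Fitt}_0(C)_{\mathfrak m}=(f)R_{\mathfrak m},\qquad \operatorname{Fitt}_1(C)_{\mathfrak m}=R_{\mathfrak m}
\]
for all $\mathfrak m$. Since ideals are determined by their localisations at maximal ideals, $(\det M)=(f)$ and the entries of $M$ generate the unit ideal. The units of $R=\Z[u]$ are $\pm1$, so $\det M=\pm f$.

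It remains to show that $C$ is cyclic, after which the Fitting ideal computation gives $C\cong R/(f)$. This is the main obstacle, since $R$ is not a PID and $\operatorname{Fitt}_1=R$ does not obviously make $M$ equivalent to a diagonal matrix. I would try the following. The row $(m_{11},m_{12})$, or more generally the entries, generate the unit ideal. Over $R=\Z[u]$, of Krull dimension $2$, unimodular rows of length $2$ need not be completable directly, so I would instead argue via annihilators.

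$C$ is locally cyclic with $\operatorname{Ann}(C)_{\mathfrak m}=(f)R_{\mathfrak m}$, so $\operatorname{Ann}(C)=(f)$. Hence $C$ is a finitely generated $R/(f)$-module that is locally free of rank $1$, i.e.\ an invertible $R/(f)$-module. Now $R/(f)=\Z[u]/(4-u)\cong\Z$ via $u\mapsto4$. Projective $\Z$-modules of rank one are free, so $C\cong\Z\cong R/(f)$. This last identification sidesteps the matrix-diagonalisation issue, and it is the route I would commit to.
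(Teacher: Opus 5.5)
Your argument is correct and is essentially the paper's proof. The paper likewise computes $\coker(\alpha)_{\mathfrak m}\cong R_{\mathfrak m}/fR_{\mathfrak m}$ from the local matrix $\operatorname{diag}(f,1)$, deduces that $f\cdot\coker(\alpha)=0$, and concludes that $\coker(\alpha)$ is a finitely generated, locally free rank-one module over $R/(f)\cong\Z$, hence free; your Fitting-ideal paragraph (including $\det M=\pm f$) is never used in the route you commit to and can be dropped.
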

\begin{proof}
Set~$L:=\coker(\alpha)=E_1/\alpha(E_2)$ for brevity; this is a finitely generated
$R$-module.  
Localisation is exact and thus commutes with cokernels.
Given a maximal ideal~$\mathfrak{m} \subset R$, it follows that~$L_{\mathfrak m} \cong \coker(\alpha_{\mathfrak m})$, as~$R_{\mathfrak{m}}$-modules.
Since~$\alpha_{\mathfrak m} \colon (E_2)_{\mathfrak{m}} \to  (E_1)_{\mathfrak{m}}$ is represented by the matrix~$\operatorname{diag}(f,1)$,  its cokernel is
$$ L_{\mathfrak m} \cong \coker(\alpha_{\mathfrak{m}}) \cong R_{\mathfrak m}/fR_{\mathfrak m}\oplus R_{\mathfrak m}/R_{\mathfrak m}
=R_{\mathfrak m}/fR_{\mathfrak m}.$$
 If~$f\notin\mathfrak m$, then~$f$
is a unit of~$R_{\mathfrak m}$ and this vanishes, whence
\begin{equation}
\label{eq:Lm}
 L_{\mathfrak m}\cong
 \begin{cases}
 0&f\notin\mathfrak m\\
 R_{\mathfrak m}/fR_{\mathfrak m},&f\in\mathfrak m.
 \end{cases}
\end{equation}
In particular~$(fL)_{\mathfrak m}=fL_{\mathfrak m}=0$ for every
maximal ideal $\mathfrak{m} \subset R$, and therefore~$fL=0$.  
Thus~$L$ is a module over~$R/(f)$, finitely
generated because~$L$ is finitely generated over~$R$.
Here, note that
\[
 R/(f)=\mathbb Z[u]/(4-u)\cong\mathbb Z.
\]
We now show that~$L\cong R/(f)$ as $R/(f)\cong \Z$-modules.

As usual, we first prove the local version.
Namely, we show that~$L\cong R/(f)$ after localising at every maximal ideal of~$R/(f)$.
The maximal ideals of~$R/(f)$ correspond bijectively to the maximal ideals
$\mathfrak m \subset R$ containing~$f$.
For such an ideal~$\mathfrak m \subset R$,  using~\eqref{eq:Lm}, we have the isomorphisms
$$
L \otimes_{R/(f)} (R/(f))_{\mathfrak m/(f)}
\cong L \otimes_{R/(f)} (R/(f) \otimes_R R_{\mathfrak{m}})
\cong L\otimes_R R_{\mathfrak m}
= L_{\mathfrak m}
\cong R_{\mathfrak m}/fR_{\mathfrak m}
\cong (R/(f))_{\mathfrak m/(f)}.
$$
So~$L$ is a finitely generated module over~$\mathbb Z\cong R/(f)$ (hence finitely presented,~$\mathbb Z$ being Noetherian) that is free of rank one at every maximal localisation.
As in the proof of Proposition~\ref{prop:Free},  this implies that $L$ is finitely generated projective over the ring~$\mathbb Z\cong R/(f)$ and therefore is free.  
Its rank is one, as one sees by localising at any maximal ideal. 
 Therefore~$L\cong R/(f)$,  thus concluding the proof of the lemma.
\end{proof}

Next, we describe the image of $\alpha \colon E_2 \to E_1$.

\begin{proposition}
\label{prop:ImageAlpha}
There is an $R$-basis~$e_1,e_2 \in E_1 \cong R^2$ such that 
$$ \im(\alpha)= fRe_1\oplus Re_2.$$
\end{proposition}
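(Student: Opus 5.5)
We prove the proposition using the structure already obtained. By Proposition~\ref{prop:Free}, $E_1\cong R^2$ and $E_2\cong R^2$. In Construction~\ref{cons:Alpha}, $\alpha$ is given locally by $\operatorname{diag}(f,1)$, so it is injective: it is injective at every maximal localisation, hence injective. By Lemma~\ref{lem:Cokernel}, $L:=E_1/\alpha(E_2)\cong R/(f)$.

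The plan is to lift a generator of $L$ and to use the fact that $R=\Z[u]$ has trivial Picard group, i.e.\ that finitely generated projective $R$-modules are free.

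First, we consider the surjection $\pi\colon E_1\to L\cong R/(f)$ and the composite $E_1\to R/(f)$. We would like a surjection $\psi\colon E_1\to R$ lifting it, with $\ker\psi$ free of rank one. Splitting $\pi$ directly is hopeless, since $R/(f)$ is torsion. We therefore argue through a rank-one summand. Over $\Z\cong R/(f)$, the quotient $E_1/fE_1\cong (R/(f))^2=\Z^2$ surjects onto $L\cong\Z$. The kernel $K'$ of this map $\Z^2\to\Z$ is a direct summand of rank one, so $\Z^2=K'\oplus \Z\bar e_1$ for some $\bar e_1$ mapping to a generator of $L$.

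Next, we lift to $E_1$. Take $e_2\in E_1$ lifting a generator of $K'$. Since $\GL_2(\Z)\to\GL_2(R/(f))$ is an isomorphism, and since $\GL_2(R)$ surjects onto it (via integer matrices), we may choose a basis of $E_1$ whose reduction is the basis $(\bar e_1,\bar e_2)$ of $E_1/fE_1$. Concretely, we fix any basis $b_1,b_2$ of $E_1$, express $(\bar e_1,\bar e_2)$ in terms of $(\bar b_1,\bar b_2)$ by a matrix in $\GL_2(\Z)$, and apply the same integer matrix to $(b_1,b_2)$. This produces a basis $e_1,e_2$ of $E_1$ with $\pi(e_2)=0$ and $\pi(e_1)$ a generator of $L$.

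Then $e_2\in\im(\alpha)$ and $fe_1\in\im(\alpha)$, because $fL=0$. Hence $fRe_1\oplus Re_2\subseteq\im(\alpha)$. For the reverse inclusion, let $y=r_1e_1+r_2e_2\in\im(\alpha)$. Then $\pi(y)=r_1\pi(e_1)=0$, and since $L\cong R/(f)$ is generated by $\pi(e_1)$ with annihilator exactly $(f)$, we get $r_1\in(f)$. This gives equality.

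The main obstacle is the lifting step. It relies on $R\to R/(f)$ inducing a surjection $\GL_2(R)\to\GL_2(R/(f))$, which holds here precisely because $R/(f)\cong\Z$ and the map $\Z\to R\to R/(f)$ is an isomorphism: the composite $\Z\subset\Z[u]\to\Z[u]/(4-u)$ is the identity on $\Z$. We also need to check that the identification $L\cong R/(f)$ makes the annihilator of a generator exactly $(f)$; this is immediate from Lemma~\ref{lem:Cokernel}.
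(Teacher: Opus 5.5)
Your proof is correct and follows essentially the same route as the paper. Both arguments use that $\Z\subset R$ maps isomorphically onto $R/(f)\cong\Z$, so an adapted basis of $E_1$ is obtained from an arbitrary one by an integer change-of-basis matrix; the paper does this directly with the coprime integers $m=\pi(e_1'),\ n=\pi(e_2')$ and the matrix $\bsm r&-n\\ s&m\esm\in\SL_2(\Z)$, while you pass through $E_1/fE_1\cong\Z^2$ and split its surjection onto $L$, which amounts to the same computation.
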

\begin{proof}
Use Lemma~\ref{lem:Cokernel} to fix an $R$-linear isomorphism~$L\cong R/(f)$ and let~$\pi \colon E_1\twoheadrightarrow R/(f) \cong \Z$ be the composition of the quotient map~$E_1\to L=E_1/\alpha(E_2)$ with this isomorphism.  
It follows that
\begin{equation}\label{eq:kerpi}
\ker(\pi)=\im(\alpha).
\end{equation}
Recall from Proposition~\ref{prop:Free} that~$E_i \cong R^2$.
Choose an $R$-basis~$e'_1,e'_2$ of~$E_1 \cong R^2$ and consider the integers~$m:=\pi(e'_1),n:=\pi(e'_2) \in R/(f)\cong\mathbb Z$.
The map~$\pi$ is~$R$-linear,
hence for~$p,q\in R$ with images~$\bar p,\bar q\in\mathbb Z$ in~$R/(f)$, we have~$\pi\bigl(p\,e'_1+q\,e'_2\bigr)=\bar p\,m+\bar q\,n.$
Since the images~$\bar p,\bar q$ range over all of~$\mathbb Z$,  the surjectivity of~$\pi$ says~$\mathbb Zm+\mathbb Zn=\mathbb Z$, i.e.~$\gcd(m,n)=1$.
Choose integers~$r,s$ with~$rm+sn=1$ and set
\[
 e_1:=r\,e'_1+s\,e'_2,
 \qquad
 e_2:=-n\,e'_1+m\,e'_2.
\]
The change of basis matrix
$\left(\begin{smallmatrix}r&-n\\s&m\end{smallmatrix}\right)$ lies in
$\SL_2(\mathbb Z)\subset\SL_2(R)$ because its determinant is~$rm+sn=1$.
It follows that~$e_1,e_2$ is again an~$R$-basis of~$E_1$, and
\[
 \pi(e_1)=rm+sn=1,
 \qquad
 \pi(e_2)=-nm+mn=0.
\]
We now conclude by calculating $\ker(\pi)=\im(\alpha)$.
An arbitrary element of~$E_1$ can be written as~$p(u)e_1+q(u)e_2$ with~$p(u),q(u)\in R$.
Its image under~$\pi$ is the residue
class of~$p(u)$ modulo~$f$, and this vanishes exactly when~$f$ divides
$p(u)$. 
It follows that~$\im(\alpha)=\ker(\pi)=fRe_1\oplus Re_2.$
This concludes the proof of the proposition.
\end{proof}

We can now prove the main theorem which states that there is a~$Q\in\SL_2(\La)$ with~$ \sigma(Q)=SQ$.

\begin{proof}[Proof of Theorem~\ref{thm:splitting}.]
Recall that constructing a matrix~$Q \in SL_2(\Lambda)$ with~$\sigma(Q)=SQ$ reduces to constructing vectors~$q_1 \in E_1$ and~$q_2 \in E_2$ with~$\det(q_1,q_2)=1$.
Recalling the $R$-basis~$e_1,e_2$ of~$E_1 \cong R^2$ from Proposition~\ref{prop:ImageAlpha}, we will see that setting~$q_1:=e_1$ and~$q_2:=\varepsilon \alpha^{-1}(e_2)$ will work for an appropriate choice of sign~$\varepsilon = \pm 1$.
Here,  we used Proposition~\ref{prop:ImageAlpha} to ensure that~$e_2$ lies in the image of~$\alpha$.
As we have already explained,  the fact that~$q_1 \in E_1$ and~$q_2 \in E_2$ ensures that the matrix~$Q=(q_1,q_2)$ satisfies~$\sigma(Q)=SQ$,  it only remains to prove that~$\det(q_1,q_2)=1$.
By definition of~$\alpha$,  we have~$\varepsilon e_2=\alpha(q_2)=h_-q_2$ so~$\varepsilon h_- \det(q_1,q_2)=\det(e_1,e_2)$ and we are reduced to showing that~$\det(e_1,e_2)=\varepsilon h_-$.
\begin{claim}
Taking the determinant induces an~$R$-linear isomorphism
\[
 \delta \colon \textstyle E_1 \wedge_R E_1 \longrightarrow h_-R,  \quad
 v \wedge w \mapsto \det(v,w).
\]
\end{claim}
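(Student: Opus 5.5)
Since $E_1\cong R^2$ by Proposition~\ref{prop:Free}, the exterior square $E_1\wedge_R E_1$ is free of rank one over $R$, generated by $e_1\wedge e_2$ for any $R$-basis $e_1,e_2$ of $E_1$. The map $(v,w)\mapsto\det(v,w)$ is $R$-bilinear and alternating, so it induces an $R$-linear map $\delta\colon E_1\wedge_RE_1\to\Lambda$. The claim therefore amounts to two statements: the image of $\delta$ is exactly $h_-R$, and $\delta$ is injective. Injectivity reduces to showing that $\delta(e_1\wedge e_2)\neq0$, because the source is $R$-free of rank one and $\Lambda$ is a domain with $R$ torsion-free inside it.

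The plan is to argue locally, which is the pattern used throughout Section~\ref{sec:global}. First I will check that the image lies in $h_-R$ globally. For $v,w\in E_1$, set $z:=\det(v,w)$. Using $D\ol v=tSv$ and $D\ol w=tSw$ gives
\[
t\,\ol z=\det(D)\,\ol{\det(v,w)}=\det(D\ol v,D\ol w)=\det(tSv,tSw)=t^2\det(S)\,z=t^2z,
\]
so $\ol z=tz$. By the eigenspace description in Lemma~\ref{lem:eigenspaces}, already used in Construction~\ref{cons:Fi} to identify $\{z:\ol z=tz\}=h_-R$, this gives $z\in h_-R$. Hence $\delta$ takes values in $h_-R$, which is a free $R$-module of rank one, with generator $h_-$ since $\Lambda$ is a domain.

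Next I will prove surjectivity onto $h_-R$ by localising at every maximal ideal $\mathfrak m\subset R$. Proposition~\ref{prop:Free} supplies the local basis $q_1^{\mathfrak m},h_-q_2^{\mathfrak m}$ of $(E_1)_{\mathfrak m}$. Its determinant is $h_-\det(Q_{\mathfrak m})$, and $\det(Q_{\mathfrak m})$ is a unit of $\Lambda_{\mathfrak m}$. The key point is that this unit in fact lies in $R_{\mathfrak m}^\times$. To see this, apply $\det$ to $D\ol{Q_{\mathfrak m}}=SQ_{\mathfrak m}D$: this gives $t\,\ol{\det Q_{\mathfrak m}}=t\det Q_{\mathfrak m}$, so $\det Q_{\mathfrak m}$ is fixed by the involution. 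By the localised Lemma~\ref{lem:eigenspaces} it therefore lies in $R_{\mathfrak m}$, and it is a unit there because $\Lambda_{\mathfrak m}$ is integral over $R_{\mathfrak m}$, so $R_{\mathfrak m}\cap\Lambda_{\mathfrak m}^\times=R_{\mathfrak m}^\times$.

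It follows that $\delta_{\mathfrak m}$ sends a generator of $(E_1\wedge E_1)_{\mathfrak m}$ to a generator of $h_-R_{\mathfrak m}$, so $\delta_{\mathfrak m}$ is an isomorphism of free rank-one $R_{\mathfrak m}$-modules. This uses that localisation commutes with exterior powers and with the submodule $h_-R$. Since $\delta$ is an isomorphism after localising at every maximal ideal, it is an isomorphism globally. The step I expect to need the most care is identifying the fixed and twisted-fixed parts after localisation, together with the unit statement $R_{\mathfrak m}\cap\Lambda_{\mathfrak m}^\times=R_{\mathfrak m}^\times$. Both should follow from the freeness of $\Lambda$ over $R$ on the basis $1,t$ and from integrality, as in the proof of Lemma~\ref{lem:locality}.
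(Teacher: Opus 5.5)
Your proposal is correct and follows the paper's proof essentially step for step. You get the containment in $h_-R$ by taking determinants of $D\ol V=tSV$. You then compute $\delta_{\mathfrak m}(q_1^{\mathfrak m}\wedge h_-q_2^{\mathfrak m})=h_-\det(Q_{\mathfrak m})$ with $\det(Q_{\mathfrak m})$ fixed by the involution, and pass from local isomorphisms to a global one. Your check that $\det(Q_{\mathfrak m})$ is a unit of $R_{\mathfrak m}$, and not merely of $\Lambda_{\mathfrak m}$, makes explicit a point the paper uses tacitly. The separate injectivity reduction in your first paragraph is unnecessary, since the local isomorphism already covers it.
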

\begin{proof}
We first argue that~$\delta$ does indeed take values in~$h_-R$.
Given~$v,w\in E_1$,  recall from~\eqref{eq:Econditions} that~$D\ol v=tSv$.
Form the matrix~$V:=(v,w)$, so that~$D\ol{V}=tSV$, set~$d:=\det(V)$ and take determinants to obtain~$\det(D)\ol d=\det(tS)\,d.$
This can be rewritten as~$t\ol d=t^2d$, i.e.~$d=t^{-1}\ol d$.
This implies that~$d \in (1+t^{-1})R=h_-R$, as required; see Lemma \ref{lem:eigenspaces}.

Next, we show that the map~$\delta$ is an isomorphism.
We will do this by showing that the induced map~$\delta_{\mathfrak{m}} \colon (E_1 \wedge_R E_1)_{\mathfrak m} \to  (h_-R)_{\mathfrak{m}}$ is an $R_{\mathfrak{m}}$-isomorphism for every maximal ideal $\mathfrak{m} \subset R$.
Exterior powers commute with localisation, so
$(E_1 \wedge_R E_1)_{\mathfrak m}=(E_1)_{\mathfrak m} \wedge_{R_{\mathfrak{m}}} (E_1)_{\mathfrak m}$.
Using the basis of~$q^{\mathfrak{m}}_1,h_-q^{\mathfrak{m}}_2 \in (E_1)_\mathfrak{m} \cong R_{\mathfrak{m}}^2$ from Proposition~\ref{prop:Free}, this has basis~$q^{\mathfrak{m}}_1\wedge(h_-q^{\mathfrak{m}}_2)$.
Recalling the matrix~$Q_{\mathfrak m}:=(q^{\mathfrak{m}}_1,q^{\mathfrak{m}}_2)$ from Theorem \ref{thm:local}, we then calculate
\[
\delta_{\mathfrak{m}} \bigl(q^{\mathfrak{m}}_1\wedge(h_-q^{\mathfrak{m}}_2)\bigr)
 =\det(q^{\mathfrak{m}}_1,h_-q^{\mathfrak{m}}_2)
 =h_-\det(Q_{\mathfrak m}).
\]
The unit~$\det(Q_{\mathfrak m}) \in \Lambda_{\mathfrak{m}}$  lies in~$R_{\mathfrak{m}}$ because $\det(Q_{\mathfrak m})
=\det(SQ_{\mathfrak m})
=\det(\sigma(Q_{\mathfrak m}))
=\overline{\det(Q_{\mathfrak m})}.$
Since~$\det(Q_{\mathfrak m})h_-$ generates~$(h_-R)_{\mathfrak m}=h_-R_{\mathfrak m}$,  this shows that~$\delta_{\mathfrak m}$ sends a basis to a basis and is thus an isomorphism, for every~$\mathfrak m$.  
The kernel and cokernel of~$\delta$ therefore localise to zero at every maximal ideal of $R$,  and therefore vanish.
Thus~$\delta$ is an $R$-isomorphism.
The claim follows.
\end{proof}

Since the map~$\delta$ is an isomorphism,~$\det(e_1,e_2)=\delta(e_1\wedge e_2)$ is a generator of~$h_-R$, i.e. for some unit~$\varepsilon\in R^\times$, we have~$ \det(e_1,e_2)=\varepsilon h_-.$
The only units of~$\mathbb Z[u]$ are~$\pm1$,
so~$\varepsilon\in\{\pm1\}$.
As explained above, setting~$q_1:=e_1 \in E_1$ and~$q_2:=\varepsilon \alpha^{-1}(e_2) \in E_2$ now yields~$\varepsilon h_-\det(q_1,q_2)= \det(e_1,e_2)= \varepsilon h_-$ and thus $\det(q_1,q_2)=1$, as required.
The matrix $Q:=(q_1,q_2) \in SL_2(\Lambda)$ now satisfies~$\sigma(Q)=SQ$ and this concludes the proof of Theorem \ref{thm:splitting}.
\end{proof}

\appendix

\section{Properties of the fixed ring~$\Z[u] \subset \Z[t^{\pm 1}]$}
\label{sec:fixed}

Write $\Lambda:=\Z[t^{\pm 1}]$ and recall from Section~\ref{sec:cocycle} that we set 
\[
 x=t-1,\qquad 
 u=x\overline{x}=2-t-t^{-1}.
\]
This appendix collects some facts about the subring $R=\Z[u] \subset \Lambda$.
For brevity, we write

\[
h_+:=1+t,\qquad
 h_-:=1+t^{-1}.
\]
The following lemma is elementary but is used throughout the article; its proof is left to the reader.

\begin{lemma}\label{lem:eigenspaces}
The ring~$\La$ is a free $R$-module with basis~$1,t$ and the following equalities hold:
\begin{align*}
 \{z\in\La:\ol z=z\}&=R,\\
 \{z\in\La:z=t\ol z\}&=h_+R,\\
 \{z\in\La:z=t^{-1}\ol z\}&=h_-R,\\
 \{z\in\La:z+t\ol z=0\}&=xR.
\end{align*}
The same assertions hold after localisation at an arbitrary multiplicative
subset of~$R$.
\end{lemma}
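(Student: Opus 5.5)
Every element of $\Lambda$ has a unique Laurent expansion $z=\sum_k z_k t^k$, so I will argue by elementary manipulation of coefficients, working with the symmetric polynomials $s_n:=t^n+t^{-n}$ and the antisymmetric ones $t^n-t^{-n}$.

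The first step is to show $R=\Z[u]$ equals the fixed ring. The inclusion $R\subset\Lambda^{\mathrm{inv}}$ is clear since $\ol u=u$. For the converse, a fixed element is an integral combination of $1$ and the $s_n$. Because $s_1=2-u$ and $s_{n+1}=s_1s_n-s_{n-1}$, induction gives $s_n\in\Z[u]$, so every fixed element lies in $R$. I also need $u$ to be transcendental over $\Z$, which holds because $u^k$ has leading term $(-1)^kt^k$.

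Next comes freeness over $R$ with basis $1,t$. Here $t$ satisfies $t^2-(2-u)t+1=0$, so $\Lambda=R+Rt$ once one notes that $t^{-1}=(2-u)-t$ and that higher powers reduce. For independence, suppose $p(u)+q(u)t=0$. Applying the involution and subtracting gives $q(u)(t-t^{-1})=0$, so $q=0$ and then $p=0$.

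The three eigenspace statements then follow in order. For $\{z=t\ol z\}$, write $z=p+qt$ with $p,q\in R$; this is possible because $1,t$ is a basis. Then $t\ol z=tp+q$, so the condition becomes $p=q$, i.e. $z=p(1+t)\in h_+R$. Applying the involution to this yields $\{z=t^{-1}\ol z\}=h_-R$. For $\{z+t\ol z=0\}$, the same computation gives $p+q=0$, i.e. $z=p(t-1)=px\in xR$. In each case the reverse inclusion is a direct check: for example $t\ol{(1+t)}=t+1$ and $x+t\ol x=x-x=0$.

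Finally, for localisation at a multiplicative set $U\subset R$ the elements of $U$ are fixed by the involution, so every identity above is $R$-linear. Hence $U^{-1}\Lambda$ is free on $1,t$ over $U^{-1}R$, and the same coefficient argument runs verbatim. No step is a genuine obstacle; the only point needing care is the independence of $1,t$, which rests on $t-t^{-1}$ being a nonzerodivisor in the domain $\Lambda$ and its localisations.
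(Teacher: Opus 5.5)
Your proof is correct. The paper explicitly leaves this lemma to the reader, so there is no competing argument to compare with. Your route through the relation $t^2-(2-u)t+1=0$ and the basis $1,t$ matches how the paper uses the lemma later, e.g.\ the presentation $\Lambda\cong R[t]/(t^2-(2-u)t+1)$ in the proof of Lemma~\ref{lem:locality}.

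There are two small points. First, in the last eigenspace, $p+q=0$ gives $z=p(1-t)=-px$ rather than $p(t-1)$; this is harmless since $xR=-xR$. Second, the Laurent-coefficient argument of your first step does not literally transfer to $U^{-1}\Lambda$, so ``runs verbatim'' is not quite accurate for the fixed-ring statement. This is easily repaired in either of two ways:
\begin{itemize}
\item Once $1,t$ is a basis, $z=p+qt$ is fixed if and only if $q(t-t^{-1})=0$, i.e.\ $q=0$ in the domain $U^{-1}\Lambda$ (for $0\notin U$).
\item Each of the four sets is the kernel of an $R$-linear endomorphism of $\Lambda$, such as $z\mapsto z-\overline{z}$ or $z\mapsto z-t\overline{z}$, and localisation is exact.
\end{itemize}
The first repair also shows that the $s_n$ recursion in your first step is not needed.
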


\bibliographystyle{alpha}
\bibliography{BiblioHomotopyBoundary}

\end{document}